\definecolor{citeclr}{rgb}{0.55, 0.55, 0.64}
\definecolor{linkclr}{rgb}{0, 0.21, 0.9447}
\patchcmd{\subsection}{-.5em}{.5em}{}{}
\patchcmd{\subsubsection}{-.5em}{.5em}{}{}
\patchcmd{\section}{\normalfont}{\normalfont\Large}{}{}
\newcommand{\CC}{\mathbb{C}}
\renewcommand{\C}{\mathbb{C}}
\newcommand{\R}{\mathbb{R}}
\newcommand{\Z}{\mathbb{Z}}
\newcommand{\cF}{\mathcal{F}}
\newcommand{\cM}{\mathcal{M}}
\newcommand{\cR}{\mathcal{R}}
\newcommand{\cS}{\mathcal{S}}
\DeclareSymbolFont{cyrletters}{OT2}{wncyr}{m}{n}
\DeclareMathSymbol{\sha}{\mathalpha}{cyrletters}{"58}
\newcommand{\eps}{\varepsilon}
\newcommand{\dee}{\partial}
\renewcommand{\Re}{\mathrm{Re}}
\renewcommand{\Im}{\mathrm{Im}}
\renewcommand{\arg}{\mathrm{arg}}
\newcommand*{\onesymb}{\text{\large\usefont{U}{bbold}{m}{n}1}} 
\newcommand{\one}[1]{\raisebox{-0.33pt}{\onesymb}\mspace{-1.5mu}\{#1\}}
\newcommand{\onelr}[1]{\raisebox{-0.33pt}{\onesymb}\mspace{-4.5mu}\left\{#1\right\}}
\renewcommand{\mod}{\,\mathrm{mod}\,}
\newcommand{\ds}{\displaystyle}
\newlength{\strutheight}
\newcommand{\half}{\frac{1}{2}}
\newcommand{\thalf}{\tfrac{1}{2}}
\newcommand\starsum{\mathop{\sum\nolimits^{*}}}
\newcommand\starsum*{\mathop{\sum\nolimits^{\mathrlap{*}}}}
\newcommand{\halfGamma}[1]{\Gamma\!\left(\frac{#1}{2}\right)\!}
\newcommand{\x}{x}
\newcommand{\rep}{\varphi}
\newcommand{\repdim}{r}
\newcommand{\rootnum}{\omega}
\newcommand{\qrep}{q_\rep}
\newcommand{\wtpsi}{\mspace{7.5mu}\lower0.18ex\hbox{$\widetilde{}$}\mkern-7.5mu \psi}
\newtheorem{theorem}{Theorem}[section]
\newtheorem{lemma}[theorem]{Lemma}
\newtheorem{corollary}[theorem]{Corollary}
\newtheorem{conjecture}[theorem]{Conjecture}
\newtheorem{proposition}[theorem]{Proposition}
\theoremstyle{definition}
\newtheorem{definition}[theorem]{Definition}
\newtheorem{remark}[theorem]{Remark}
\newtheorem{figurecap}[theorem]{Figure}
\newtheorem{tablecap}[theorem]{Table}
\author{Alex Cowan}
\address{Department of Mathematics, University of Waterloo, Waterloo, ON, Canada}
\email{alex.cowan@uwaterloo.ca}
\thanks{The author was partially supported by the Simons Foundation Collaboration Grant 550031.}
\title{Murmurations and ratios conjectures}
\date{\today}
   \def\MR#1{}
\begin{document}
\begin{abstract}

  We introduce a new method for studying murmurations, based on random matrix theory.
  With this method, we exhibit murmurations or similar phenomena:
  assuming ratios conjectures, for elliptic curves ordered by height, quadratic twists of a fixed elliptic curve, and the inverse Mellin transform of the shifted second moment of $\zeta'/\zeta$ on vertical lines;
  assuming GRH, for primitive quadratic Dirichlet characters, and holomorphic modular forms of prime level tending to infinity with sign and weight fixed;
  and unconditionally the inverse Mellin transform of the shifted second moment of $\zeta$ on vertical lines.
  We also present a generalization of our approach which relies only on the approximate functional equation in place of ratios conjectures.
\end{abstract}
\maketitle
{
\tableofcontents
}
\section{Introduction}


Unexpected and striking oscillations in the average $a_p$ values of a set of elliptic curves as $p$ varies were observed empirically in \cite{HLOP} using techniques from data science. Since then, similar patterns have been discovered for many other types of arithmetic objects \cite{drew_letter}. These sorts of phenomena are called \textit{murmurations}, and characterized by invariance under simultaneous scaling of $p$ and the conductors of the arithmetic objects. Proofs involving trace formulas have appeared for Dirichlet characters \cite{LOP}, holomorphic weight $k$ cusp forms \cite{zubrilina}, holomorphic level $1$ cusp forms \cite{bblld}, and level $1$ Maass forms \cite{blldshz}. However, the situation for elliptic curves, despite being the catalyst for this other work, remains mysterious: as of yet there are no theorems, conjectures, or even heuristic predictions for the experimental results of \cite{HLOP}. 

In this paper, we prove the following, which describes murmurations of elliptic curves 
in the height-ordered family
\begin{align*}
  \cF(H) \coloneqq \left\{E_{a,b}\mspace{-2mu}:\mspace{-2mu} y^2 = x^3 + ax + b \;:\; |a| \leqslant H^{\frac{1}{3}},\,\, |b| \leqslant H^{\half},\,\, 3\nmid a,\,\, 2\nmid b,\,\, p^4 \mid a \Rightarrow p^6 \nmid b\right\}
  .
\end{align*}
\begin{samepage}
\begin{theorem}\label{elliptic_curve_murmuration_rho}
  Fix $\rootnum \in \{\pm 1\}$ and define $\cF(H)^\rootnum \coloneqq \{E \in \cF(H) \,:\, \rootnum_E = \rootnum\}$. Assume that \eqref{conductor_distribution}, \eqref{small_conductors}, and the ``ratios conjecture'' \cite[Conj.\ 3.7]{DHP} hold with $\cF(H)$ replaced with $\cF(H)^\rootnum$.
  For any $H, y, T, \eps$ such that $0 < \eps < \tfrac{5}{6}$ and $(Hy)^{\half + \eps} \ll T < Hy$,
\begin{align*}
  &\frac{1}{\#\cF(H)^\rootnum}\sum_{E \in \cF(H)^\rootnum}\frac{1}{\sqrt{Hy}}\sum_{\substack{p^k < Hy \\ p\, \nmid N_E}} \left(\alpha_p^k + \overline{\alpha}_p^k\right)\log p \\
  &\hspace{3cm}= \frac{\rootnum}{2\pi i} \int_\R\int_{\half + \eps - iT}^{\half + \eps + iT} \frac{\Gamma(\frac{3}{2} - s)}{\Gamma(\half + s)} \zeta(2s) A(\thalf - s, s - \thalf)  \left(4\pi^2 \frac{y}{\lambda}\right)^{s-\half}\frac{ds}{s} \,F_N'(\lambda)\,d\lambda\\
  &\hspace{3.4cm} - \frac{1}{\sqrt{Hy}}\sum_{p^k < \sqrt{Hy}} \log p + O\!\left(H^{\frac{5}{12} + \eps}\#\cF(H)^{-1}y^\eps T^\eps + (\log H)^{-\frac{5}{6}}\right),
\end{align*}
where $A(\alpha,\gamma)$ is the product from \cref{A_def} involving traces of Hecke operators, and $F_N'$ is the approximation given in \cref{FN_def} to the distribution of $N_E/H$ among $E \in \cF(H)$.
\end{theorem}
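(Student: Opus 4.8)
The plan is to pass from the prime sum to a Mellin--Perron integral of the family-averaged logarithmic derivative $\frac1{\#\cF(H)^\omega}\sum_{E}\frac{L'}{L}(s,E)$, to insert the ratios conjecture there, and to read off the answer. I work in the analytic normalization of $L(s,E)=\sum_n\lambda_E(n)n^{-s}$, so that $\alpha_p+\overline{\alpha}_p=\lambda_E(p)$, $|\alpha_p|=1$, and $-\frac{L'}{L}(s,E)=\sum_n\Lambda_{L(\cdot,E)}(n)n^{-s}$ with $\Lambda_{L(\cdot,E)}(p^k)=(\alpha_p^k+\overline{\alpha}_p^k)\log p$; GRH for the $L(s,E)$ and for $\zeta$ is used. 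First I would separate the prime powers: writing $\alpha_p^k+\overline{\alpha}_p^k=\lambda_E(p^k)-\lambda_E(p^{k-2})$ and reindexing, the terms with $k\ge2$ telescope, and using $\lambda_E(p^2)=\lambda_{\mathrm{sym}^2 E}(p)$ together with the (unconditional) prime number theorem for $L(s,\mathrm{sym}^2 E)$ their total contribution is exactly $-\tfrac1{\sqrt{Hy}}\sum_{p^k<\sqrt{Hy}}\log p+o(1)$; the terms with $p\mid N_E$ and the $k\ge3$ terms are $o(1)$ after division by $\sqrt{Hy}$. This isolates the essential quantity $\tfrac1{\sqrt{Hy}}\cdot\tfrac1{\#\cF(H)^\omega}\sum_E\sum_{p<Hy}\lambda_E(p)\log p$, whose Dirichlet series differs from $-\frac{L'}{L}(s,E)$ by $\tfrac{\zeta'}{\zeta}(2s)$ plus a piece holomorphic near $\Re s=\tfrac12$.

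Next I would apply truncated Perron's formula to $\frac1{\#\cF(H)^\omega}\sum_E\big({-}\frac{L'}{L}(s,E)\big)$ on a line $\Re s=c>1$ and shift the contour to $\Re s=\tfrac12+\eps$; under GRH the integrand is holomorphic in the intervening strip, so no residues arise, and one is left with the integral on $\Re s=\tfrac12+\eps$ truncated at height $T$ plus horizontal connectors. Into this I would insert the prediction of \cite[Conj.~3.7]{DHP} for $\frac1{\#\cF(H)^\omega}\sum_E\frac{L'}{L}(s,E)$ --- obtained from the ratios conjecture by differentiating in the numerator variable and letting it tend to the denominator variable, the apparent pole at coincidence cancelling in the standard way. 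Its main term splits into an $H$-independent ``diagonal'' part, whose Perron transform (divided by $\sqrt{Hy}$) combines with the $\zeta(2s)$ coming from the degree-$2$ Euler factors to reproduce the elementary term $-\tfrac1{\sqrt{Hy}}\sum_{p^k<\sqrt{Hy}}\log p$ up to admissible error, and a ``dual''/functional-equation part which, since $\omega_E=\omega$ throughout $\cF(H)^\omega$, equals
\[
  \omega\cdot\frac1{\#\cF(H)^\omega}\sum_E\Big(\frac{N_E}{4\pi^2}\Big)^{\tfrac12-s}\frac{\Gamma(\tfrac32-s)}{\Gamma(\tfrac12+s)}\,\zeta(2s)\,A(\tfrac12-s,\,s-\tfrac12)\;+\;O\!\Big(\frac{\cR(H)}{\#\cF(H)}\Big),
\]
with $A$ as in \cref{A_def}; note that $\big(\tfrac{N_E}{4\pi^2}\big)^{1/2-s}\tfrac{\Gamma(3/2-s)}{\Gamma(1/2+s)}$ is exactly the functional-equation factor $X_E(s)$ of the weight-$2$ $L$-function.

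Then I would execute the remaining average over $E$: the only $E$-dependence above is the conductor factor $\big(\tfrac{N_E}{4\pi^2}\big)^{1/2-s}$, and by \eqref{conductor_distribution} together with \eqref{small_conductors} (the latter controlling the curves of very small conductor, where this factor is largest) one replaces $\tfrac1{\#\cF(H)^\omega}\sum_E\big(\tfrac{N_E}{4\pi^2}\big)^{1/2-s}$ by $H^{1/2-s}\int_\R\big(\tfrac{\lambda}{4\pi^2}\big)^{1/2-s}F_N'(\lambda)\,d\lambda$, with $F_N$ the limiting distribution of $N_E/H$ from \cref{FN_def}. Combined with the factor $(Hy)^s/\sqrt{Hy}=(Hy)^{s-1/2}$ from Perron the powers of $H$ cancel, $(Hy)^{s-1/2}\big(\tfrac{\lambda H}{4\pi^2}\big)^{1/2-s}=\big(\tfrac{4\pi^2 y}{\lambda}\big)^{s-1/2}$, and after interchanging the $\lambda$- and $s$-integrals and restoring the $\tfrac{1}{2\pi i}\tfrac{ds}{s}$ one obtains precisely the double integral in the statement. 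Finally one collects the errors: the Perron truncation and the horizontal connectors are $\ll\sqrt{Hy}\,T^{-1}(\log Hy)^{O(1)}\ll(Hy)^{-\eps}(\log Hy)^{O(1)}$ by the hypothesis $(Hy)^{1/2+\eps}\ll T$ (the normalizing $\sqrt{Hy}^{-1}$ is what makes this decay); the ratios error gives $\ll H^\eps y^\eps T^\eps\cR(H)\#\cF(H)^{-1}$, the $T^\eps$ and $(Hy)^\eps$ coming from the length of the contour and from $(Hy)^{s-1/2}$ on it; the conductor-distribution approximation gives $\ll(\log H)^{-5/6}$; and the prime-square, bad-prime and $\mathrm{sym}^2$ contributions are $o(1)$.

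\textbf{The main obstacle.} I expect the delicate point to be the reconciliation, in the third step, of the diagonal part of the ratios formula with the $\zeta(2s)$ contribution of the Euler factors so as to produce exactly $-\tfrac1{\sqrt{Hy}}\sum_{p^k<\sqrt{Hy}}\log p$ and nothing else: this requires tracking the precise shape of $A$ near the diagonal, shifting the holomorphic remainder past $\Re s=\tfrac12$, and matching constants --- together with establishing the error bounds above uniformly in $|t|\le T$, which needs the ratios conjecture (or GRH) to control $\frac1{\#\cF(H)^\omega}\sum_E\frac{L'}{L}(\sigma+it,E)$ throughout $\tfrac12<\sigma\le1$. The conductor average in the dual part is conceptually routine but must be performed with enough uniformity in $s$ along the contour.
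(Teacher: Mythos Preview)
Your overall strategy matches the paper's: apply an explicit formula to pass to an integral of the averaged $L'/L$, insert the ratios conjecture (\cref{ratios_conjecture_elliptic_curves}), and then replace the conductor average by the limiting distribution $F_N$. The paper packages the first two steps as \cref{elliptic_curve_murmuration} (proved exactly like \cref{dirichlet_murmuration}) and carries out the conductor average in \S\ref{sec:ec_dist}. Two points where your write-up diverges from, and is less clean than, the paper's argument deserve comment.

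First, your preliminary separation of the $k\ge 2$ prime powers via $\mathrm{sym}^2$ is unnecessary and, as written, leads to double counting: having extracted $-\tfrac{1}{\sqrt{Hy}}\sum_{p^k<\sqrt{Hy}}\log p$ from the $k\ge 2$ terms, you then apply Perron to $-L'/L$ itself (which recovers \emph{all} prime powers) and say its diagonal part ``reproduces the elementary term'' --- so that term now appears twice. In the paper no such separation is made: one applies the explicit formula \eqref{explicit_formula_elliptic_curve_1} to the full prime-power sum, inserts \cref{ratios_conjecture_elliptic_curves}, and the $-\zeta'/\zeta(1+2r)$ piece there yields $-\sum_{p^k<\sqrt{Hy}}\log p$ directly via \eqref{zeta_explicit_formula}, while the $A_\alpha(r,r)$ piece is simply bounded (it is holomorphic and bounded for $\Re r>-\tfrac14$), contributing $O(X^{1/2+\eps}T^\eps)$ just as in \eqref{Aderiv_bound}. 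So what you flag as ``the main obstacle'' --- reconciling the diagonal part with the elementary term --- is in fact routine and requires no matching of constants.

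Second, the step you call ``conceptually routine'' is where the paper does the real work. Replacing $\tfrac{1}{\#\cF(H)^\omega}\sum_E(N_E/H)^{1/2-s}$ by $\int\lambda^{1/2-s}F_N'(\lambda)\,d\lambda$ is \emph{not} uniform in $s$ along $\Re s=\tfrac12+\eps$, because $(N_E/H)^{1/2-s}$ blows up when $N_E/H$ is small. The paper handles this by splitting at $N_E=\lambda_0 H$ with $\lambda_0\asymp(\log H)^{-1}$: for $N_E<\lambda_0 H$ one invokes \eqref{small_conductors} and then shifts the $s$-contour to $\Re s=\tfrac14+\eps$, exploiting the Stirling decay of $\Gamma(\tfrac32-s)/\Gamma(\tfrac12+s)$ --- this is precisely where the hypothesis $T\gg(Hy)^{1/2+\eps}$ is used --- to bound that piece by $O(\lambda_0^{5/6})$; for $N_E>\lambda_0 H$ one integrates by parts in $\lambda$ (Riemann--Stieltjes, \eqref{integration_by_parts}), applies \eqref{conductor_distribution}, and integrates by parts back to produce $F_N'(\lambda)$. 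The $(\log H)^{-5/6}$ in the final error is exactly this $\lambda_0^{5/6}$.
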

\noindent
Note that the first term on the right hand side of \cref{elliptic_curve_murmuration_rho} does not depend on $H$. 
\end{samepage}

Our methods can be applied to many sorts of arithmetic objects, not just elliptic curves. In the case of quadratic Dirichlet characters, we prove the following.

\begin{theorem}\label{dirichlet_murmuration}
  Assume the ``ratios conjecture'' \cite[Conj.\ 2.6]{conrey_snaith}. Fix $1 < D_0 < D$, and define
  \begin{align*}
    \cF_\chi \coloneqq \{d \,:\, D_0 < d < D,\, \text{\emph{$d$ a fundamental discriminant}}\}.
  \end{align*}
  For $\eps$ such that $\tfrac{1}{\log \x } < \eps < \tfrac{1}{4}$, and $T$ such that $T \ll \x^{1-\eps}$ and $T \ll D_0^{1-\eps}$,
  \begin{align*}
    \frac{1}{\#\cF_\chi}\sum_{d \in \cF_\chi} \frac{1}{\sqrt{\x}} \sum_{\substack{p^k < \x  \\ \text{$k$ \emph{odd}}}} \chi_d(p)\log p
    ={}&
    \frac{1}{2\pi i}\int_{\half + \eps - iT}^{\half + \eps + iT}
    \frac{\pi^2}{6}\frac{\Gamma\!\left(\frac{1-s}{2}\right)}{\Gamma\!\left(\frac{s}{2}\right)} \frac{\zeta(2-2s)}{\zeta(3-2s)}
    \frac{1}{\#\cF_\chi} \sum_{d \in \cF_\chi} \left(\frac{\pi \x }{d}\right)^{\!s - \half} \,\frac{ds}{s}\\
    &+
    O\!\left(D^{\half} \#\cF_\chi^{-1} + \x^\half T^{-1} 
    \right)(\x DT)^\eps.
  \end{align*}
\end{theorem}

\Cref{fig:kronecker_murmuration} illustrates \cref{dirichlet_murmuration} for $\cF_\chi \coloneqq \{d \,:\, 95,\!000 < d < 105,\!000,\, \text{$d$ a fundamental discriminant}\}$ by plotting, as functions of $\x $, the left and right hand sides of
\begin{align}
  \label{eq:kronecker_fig}
  &\frac{1}{\#\cF_\chi}\sum_{d \in \cF_\chi}\frac{1}{\x^\half}\sum_{\substack{p^k < \x  \\ \text{$k$ odd}}} \chi_d(p)\log p 
  \,\approx\,
  \frac{1}{2\pi i}\int_{\half + \eps - iT}^{\half + \eps + iT} \frac{\pi^2}{6}\frac{\Gamma\!\left(\frac{1-s}{2}\right)}{\Gamma\!\left(\frac{s}{2}\right)} \frac{\zeta(2-2s)}{\zeta(3-2s)} \frac{1}{\#\cF_\chi} \sum_{d \in \cF_\chi} \left(\frac{\pi \x }{d}\right)^{\!s - \half} \frac{ds}{s}.
\end{align}
\begin{figure}[H]
  \includegraphics[width=0.95\textwidth]{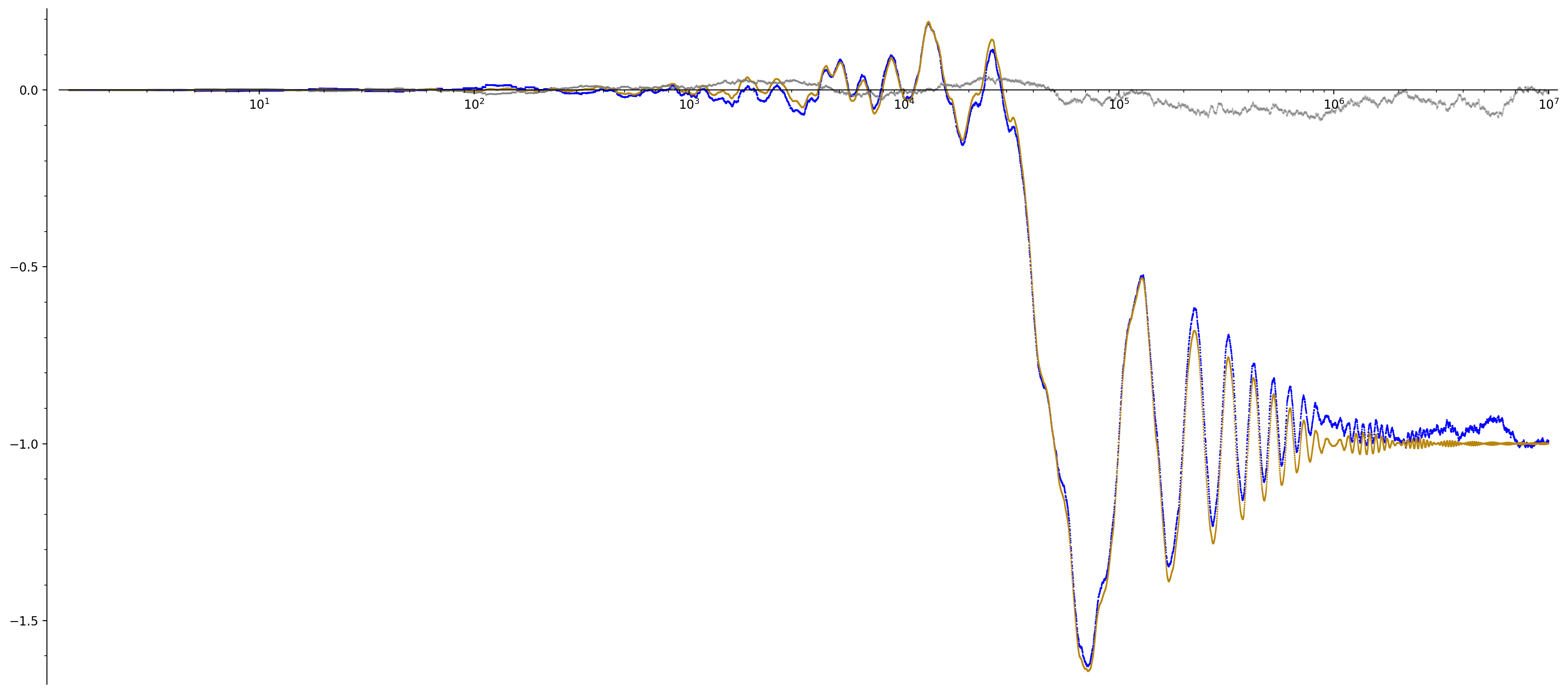}
\begin{figurecap}\label{fig:kronecker_murmuration}
  For $T = 900$ and $\eps = 0.1$, the left and right hand sides of \eqref{eq:kronecker_fig} in blue and gold respectively, as well as their difference in grey, as functions of $\x $.
  The integral in \eqref{eq:kronecker_fig} is approximated by Riemann sum evaluated at $180,\!000$ equally-spaced points. In this example $\#\cF_\chi = 3038$. Our code is available at \cite{github}.
\end{figurecap}
\vspace{-\baselineskip}
\end{figure}

Both \cref{elliptic_curve_murmuration_rho,dirichlet_murmuration} give asymptotics
\begin{align*}
  \frac{1}{\#\cF(N)}\sum_{\rep \in \cF(N)} \frac{1}{\sqrt{\#\cS(\x)}} \sum_{q \in \cS(\x)} a_\rep(q) \,\sim\, \psi\!\left(\frac{\x}{N}\right)
\end{align*}
in which
the left hand side is an appropriately normalized sum of Dirichlet coefficients $a_\rep(q)$ over sets $\cF(N)$ and sets $\cS(\x)$ of arithmetic objects $\rep$ with conductor at most $N$ and of prime powers $q$ at most $\x$ respectively,
and the right hand side is a function of only the ratio $\x/N$ --- i.e.\ is scale-invariant --- as $\x, N \to \infty$.
An essential characteristic is that the average is taken with the horizontal and vertical aspects, governed by $\x$ and $N$, coupled.
We'll call such asymptotics \textit{murmurations}.
Variations in which weights are included, in either aspect, will also be called murmurations.

A different template for murmurations is used in \cite{zubrilina,LOP}, based on a definition of Sarnak's \cite{sarnak_letter} for a distribution called a ``murmuration density''. In \cref{thm:sarnak} we give the formula for converting between our template and theirs.

\Cref{elliptic_curve_murmuration_rho,dirichlet_murmuration} hinge on estimates of distributions of low-lying zeros in families of $L$-functions. There are many existing results on this subject, which can in turn be used to produce murmurations. For example, recent work of \v Cech \cite{cech} yields murmurations for quadratic Dirichlet characters under the assumption of GRH instead of \cite[Conj.\ 2.6]{conrey_snaith}:

\begin{theorem}\label{cech_murmuration}
  Assume GRH. Let $f$ be a smooth, fast-decaying 
  weight function with Mellin transform $\cM f$, and let $\chi_n$ denote the Jacobi symbol $(\frac{\cdot}{n})$. For any $N, y, T, c$ such that $2 < T < Ny$ and $\thalf + \eps < c < \tfrac{3}{4}$,
  \begin{align*}
    \sum_{\substack{n \,>\, 0 \\ n \text{\emph{ odd, squarefree}}}} &\frac{1}{N}f\!\left(\frac{n}{N}\right) \frac{1}{\sqrt{Ny}}\sum_{p^k < Ny} \chi_n(p^k)\log p\\
    ={} &\frac{1}{2\pi i}\int_{c-iT}^{c+iT} \frac{1}{4} \left(\frac{\Gamma\!\left(\frac{1-s}{2}\right)}{\Gamma\!\left(\frac{s}{2}\right)} + \frac{\Gamma\!\left(\frac{2-s}{2}\right)}{\Gamma\!\left(\frac{1+s}{2}\right)}\right)\frac{\zeta(2-2s)}{\zeta^{(2)}(3-2s)} (\pi y)^{s - \half} \cM f(\tfrac{3}{2}-s)\frac{ds}{s}\\
    &+ 
    \frac{2 \cM f(1)}{3\zeta(2)}
    + O\!\left(N^{\half - c} y^{c-\half}  + N^{\half} y^{\half} T^{-1}\right)(NyT)^\eps
    .
  \end{align*}
\end{theorem}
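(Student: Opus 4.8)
The plan is to derive the identity from two inputs: a Perron-type explicit formula, and \v{C}ech's verification \cite{cech}, under GRH, of the ratios conjecture for the family $\{(\tfrac{\cdot}{n})\}$ of quadratic characters (after the mild reindexing identifying $(\tfrac{\cdot}{n})$, $n$ odd squarefree, with the Kronecker symbol of the fundamental discriminant $(-1)^{(n-1)/2}n$). First I would apply truncated Perron to the inner prime sum: for $c_0 = 1 + \tfrac{1}{\log(Ny)}$,
\[
  \frac{1}{\sqrt{Ny}}\sum_{p^k < Ny}\chi_n(p^k)\log p
  = -\frac{1}{2\pi i}\int_{c_0-iT}^{c_0+iT}\frac{L'}{L}(s,\chi_n)\,\frac{(Ny)^{s-\frac12}}{s}\,ds
  + O\!\left(\frac{(Ny)^{\frac12+\eps}}{T}\right),
\]
the error being sufficiently uniform in $n$ under GRH. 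I would then weight by $\tfrac1N f(n/N)$, sum over odd squarefree $n$ (smoothness of $f$ making the interchange of sum and integral harmless), and move the contour to $\Re s = c \in (\tfrac12 + \eps, \tfrac34)$; under GRH, $\tfrac{L'}{L}(s,\chi_n)$ is holomorphic and of polynomial growth for $\Re s > \tfrac12$, so no residue is crossed and the horizontal segments and leftover tails are dominated by the Perron error.

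This reduces matters to the inner average $\tfrac1N\sum_n f(n/N)\tfrac{L'}{L}(s,\chi_n)$ on the line $\Re s = c$, into which I would substitute \v{C}ech's theorem: under GRH this average equals its ratios-conjecture main term up to a power saving in $N$, valid precisely because $\Re(s-\tfrac12) < \tfrac14$, i.e.\ $c < \tfrac34$. That main term splits into a diagonal piece and a dual (swapped) piece. The diagonal piece is $-\tfrac1N\sum_n f(n/N)\sum_{j\ge1}\sum_{p\,\nmid\,2n}(\log p)\,p^{-2js}$ up to negligible local corrections; executing the $n$-sum extracts the density $\tfrac{2}{3\zeta(2)}$ of odd squarefree integers and a factor $\hat f(1)$, and the remaining $s$-integral is itself a Perron integral with cut-off $\sqrt{Ny}$ (because $p^{2j} < Ny \iff p^j < \sqrt{Ny}$), so it reconstructs exactly $\tfrac{2\hat f(1)}{3\zeta(2)}\tfrac1{\sqrt{Ny}}\sum_{p^k<\sqrt{Ny}}\log p$. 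The dual piece carries a factor $(n/\pi)^{\frac12-s}$ times a $\Gamma$-ratio depending on $n\bmod 4$ times an arithmetic Euler product; I would open the $n$-sum by Mellin inversion against $\hat f$ in the $N$-aspect, whereupon the resulting $w$-integral has a simple pole at $w = \tfrac32-s$ with residue proportional to $\hat f(\tfrac32-s)\,N^{\frac12-s}$. The $N^{\frac12-s}$ cancels the $N^{s-\frac12}$ from Perron, leaving $(\pi y)^{s-\frac12}$, the sum $\tfrac{\Gamma((1-s)/2)}{\Gamma(s/2)} + \tfrac{\Gamma(1-s/2)}{\Gamma((1+s)/2)}$ coming from the even ($n\equiv1$) and odd ($n\equiv3\bmod4$) characters, and the arithmetic factor $\zeta(2-2s)/\zeta^{(2)}(3-2s)$, with $\tfrac14$ absorbing the $2$-adic and parity bookkeeping --- precisely the stated integrand.

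The main obstacle is the error analysis: I expect \v{C}ech's error, integrated over the contour of length $\asymp T$ against the weight $(Ny)^{c-\frac12}/|s|$, to contribute exactly $O(N^{\frac12-c+\eps}y^{c-\frac12}T^\eps)$, so that the range $\tfrac12+\eps < c < \tfrac34$ is at once forced by and matched to the range in which the ratios conjecture has been verified, while the Perron truncation gives the $O(N^{\frac12+\eps}y^{\frac12+\eps}T^{-1+\eps})$ term. I would also use GRH (now for $\zeta$) to push the secondary $w$-integral in the dual piece far enough left that its tail --- and the $2$-adic corrections in the diagonal piece --- are subsumed by those two error terms. The remaining steps, namely matching gamma-factor conventions from the functional equations of $L(s,\chi_n)$ and tracking the Euler factor at $p=2$ to produce $\zeta^{(2)}$ rather than $\zeta$, are routine.
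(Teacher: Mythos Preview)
Your overall architecture --- truncated Perron for the inner prime sum, then substitute \v{C}ech's GRH verification of the ratios prediction for the smoothed average of $L'/L$, then treat the diagonal, dual, and error pieces separately --- is exactly what the paper does. But your description of what \v{C}ech's theorem actually outputs is off, and this leads you to propose a layer of work that is not needed.

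\v{C}ech's result (\cite[Thm.~1.4]{cech}, stated in the paper as \cref{cech}) is already a statement about the \emph{smoothed average} $\sum_{n}\tfrac{1}{N}f(n/N)\,\tfrac{L'}{L}(\tfrac12+r,\chi_n)$, not a per-$n$ prediction. Its dual piece is delivered directly as
\[
  -\,\hat f(1-r)\Bigl(\tfrac{\pi}{N}\Bigr)^{r}\Bigl(\tfrac{\Gamma((\tfrac12-r)/2)}{\Gamma((\tfrac12+r)/2)}+\tfrac{\Gamma((\tfrac32-r)/2)}{\Gamma((\tfrac32+r)/2)}\Bigr)\,\frac{\zeta(1-2r)}{4\,\zeta^{(2)}(2-2r)},
\]
and its diagonal piece as $\tfrac{2\hat f(1)}{3\zeta(2)}\bigl(\tfrac{\zeta'}{\zeta}(1+2r)+\text{bounded Euler correction}\bigr)$. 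There is no remaining $n$-sum, no factor $(n/\pi)^{\frac12-s}$, no $\Gamma$-ratio depending on $n\bmod 4$, and hence no secondary Mellin inversion in a $w$-variable to perform: \v{C}ech has already executed all of that. You substitute $r=s-\tfrac12$, and the murmuration integrand $\tfrac14\hat f(\tfrac32-s)(\cdots)(\pi y)^{s-\frac12}$ drops out verbatim once the $(N)^{\frac12-s}$ meets the Perron factor $(Ny)^{s-\frac12}$.

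So: your handling of the diagonal piece (Perron in $p^{2s}$ recovering $\sum_{p^k<\sqrt{Ny}}\log p$, via \eqref{zeta_explicit_formula}, with the Euler correction bounded as in \eqref{Aderiv_bound}) and your error bookkeeping ($O(N^{\frac12-c+\eps}y^{c-\frac12}T^{\eps})$ from integrating \v{C}ech's $O(|r|^\eps N^{-2\Re r+\eps})$, plus the Perron remainder) are both right and match the paper. But the paragraph on the dual piece and the ``secondary $w$-integral pushed left via GRH for $\zeta$'' should simply be deleted --- that work lives inside \v{C}ech's proof, not yours.
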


Similarly, work Miller and Montague \cite{miller_montague} can be used to produce murmurations for newspaces of holomorphic modular forms in the level aspect under GRH:

\begin{theorem}
  \label{newspace_murmuration_GRH}
  Assume GRH.
  Let $H_k^\pm(N)$ denote the set of normalized eigenforms of weight $k$, level $N$, and root number $\pm 1$. Let $\lambda_f(n)$ be such that $\lambda_f(p)p^{\frac{k-1}{2}}$ is the $p^{\text{th}}$ Hecke eigenvalue of $f \in H_k^\pm(N)$. Let $g$ be an even Schwartz function whose Fourier transform $\hat{g}$ is supported on $(-\sigma,\sigma)$.
  For fixed $\pm \in \{+,-\}$ and $k \in 2\Z_{>0}$, as $N \to \infty$ prime,
  \begin{align*}
    &\sum_{f \in H_k^\pm(N)} \frac{\Gamma(k-1)}{(4\pi)^{k-1}\langle f,f \rangle} \sum_{p \neq N} \hat{g}\!\left(\frac{\log p}{\log N}\right) \frac{\lambda_f(p) \log p}{\sqrt{p}}\\
    &\hspace{0.5cm}=\,\,\pm 2\lim_{\delta \to 0^+}\int_0^\infty \hat{g}\!\left(1 + \frac{\log y}{\log N}\right) \int_{-\infty}^\infty \frac{\Gamma(\frac{k}{2} - 2\pi it)}{\Gamma(\frac{k}{2} + 2\pi it)} \prod_{p} \left( 1 + \frac{1}{(p-1)p^{4\pi it + \delta}} \right) \left(4\pi^2 y\right)^{2\pi it} dt\, \frac{dy}{y} + O\!\left(N^{\frac{\sigma}{2} - 1 + \eps}\right).
  \end{align*}
\end{theorem}
Taking $\hat{g}(\xi)$ to be sharply peaked around $\xi = 1$ and $y$ to be fixed, \cref{newspace_murmuration_GRH} yields murmurations.

\Cref{elliptic_curve_murmuration_GRH}, based on work of Huynh--Miller--Morrison \cite{hmm}, is another application of our method, giving murmurations for quadratic twists of elliptic curves assuming \cite[Conj.\ 2.1]{HKS}.

The estimates for the distributions of low-lying zeros in families of $L$-functions our method takes as input 
in most cases come from random matrix theory, and specifically \textit{ratios conjectures}. These conjectures predict zero distributions very precisely, including the lower order terms needed to observe murmurations, and are highly nontrivial. The connection to murmurations comes via \textit{explicit formulas}: identities in analytic number theory which relate $L$-function zeros to partial sums of their prime-power Dirichlet coefficients. This approach to studying murmurations has so far not appeared in the literature outside of our short note \cite{cowan} and our paper \cite{quadratictwists} accompanying the present one; existing murmurations results have instead relied on trace formulas.

There is a ``recipe'' \cite{conrey_snaith} for producing ratios conjectures in a wide variety of settings, and they are of significant interest in the field of random matrix theory. Because of this, there is extensive evidence to support them --- see e.g.\ \cite{rubinstein:2013}, as well as the confirmations under GRH \cite{miller_montague} and \cite{cech} mentioned above --- and there are many settings in which the recipe has been implemented to obtain predictions of zero distributions: in addition to the results we leverage in this paper, see e.g.\ \cite{miller09,gjmmnpp,miller_peckner,fiorilli_miller}, to which the method presented here may readily be applied.

A closely connected subject in random matrix theory to zero distributions is that of shifted moments of $L$-functions or their logarithmic derivatives along vertical lines. The ratios conjecture recipe is applicable in such settings. Leveraging structural similarities with the examples above involving sums of Dirichlet coefficients, we exhibit murmuration-like phenomena for correlations of $\zeta'/\zeta$ along vertical lines.


\begin{theorem}
  \label{thm:zeta_logderiv_correlation_murmurations}
  Assume the ratios conjecture \cite[Conj.\ 2.1]{conrey_snaith}.
  For $T, \Delta T, H, c > 0$ and $\x  > 1 + \eps$,
  \begin{align*}
    \frac{1}{2\pi i}
    \int_{c - iH}^{c + iH}
    &\int_T^{T+\Delta T} \frac{\zeta'}{\zeta}\!\left(\half + it + \alpha\right) \frac{\zeta'}{\zeta}\!\left(\half - it - \alpha + z\right) dt \,\x^z \,\frac{dz}{z+1}
    \\
    ={}&
    \frac{1}{2\pi i} \int_{c - iH}^{c + iH}\zeta(1 + z) \zeta(1 - z)
    \prod_p \left(1 - \frac{1}{p^{1 + z}}\right)\!\left(1 - \frac{2}{p} + \frac{1}{p^{1 + z}}\right)\!\left(1 - \frac{1}{p}\right)^{\!-2}
    \int_T^{T+\Delta T} \!\left(\frac{2\pi \x }{t}\right)^{\!z} dt \,\frac{dz}{z+1}\\
    &
    + O\!\left(\x^c H^\eps (T^{\half + \eps} + \Delta T)\right).
  \end{align*}
\end{theorem}
The first term on the right hand side of \cref{thm:zeta_logderiv_correlation_murmurations} exhibits scale invariance in $\x /T$ characteristic of murmurations. It is not at all clear a priori why the inverse Mellin transform of the shifted second moment of $\zeta'/\zeta$ ought to be scale invariant like this.

Random matrix theory has predictions for correlations $\zeta$ along vertical lines as well. Bettin \cite{bettin} has verified these predictions to enough precision for the same sort of murmuration-like phenomenon to be exhibited unconditionally.

\begin{theorem}
  \label{thm:zeta_correlation_murmurations_intro}
  For $T, \Delta T, \x , H, c \in \R$ and $\alpha \in \C$ satisfying
  $T \gg \Delta T > 2H > 0$, $T \geqslant 2$, $\x  > 1$, $|\x - m| > \eps$ for all $m \in \Z$, $H + |\Im(\alpha)| < \thalf T$, $\Re(\alpha) \ll \frac{1}{\log T}$, and $e^{-T} \ll c \ll \frac{1}{\log T}$,
  \begin{align*}
    \frac{1}{2\pi i} \int_{c - iH}^{c + iH}
    &\int_{T - \Im(\alpha)}^{T+\Delta T - \Im(\alpha)}
    \zeta\!\left(\thalf + it + \alpha\right)\zeta\!\left(\thalf - it - \alpha + z\right)\,dt\,\x^z\frac{dz}{z}
    \\
    ={}&
    \frac{1}{2\pi i} \int_{c-iH}^{c+iH} \zeta(1-z) \int_{T + H}^{T + \Delta T - H} \left(\frac{2\pi \x }{t}\right)^z dt\,\frac{dz}{z}
    \,+\, \Delta T \sum_{d < \x } \frac{1}{d}
    \,-\, \one{c < 0}(\log \x  + \gamma)\Delta T
    \\
    &+\, O\!\left(T^\half + H + H^{-1}\Delta T \right)(\x TH)^\eps
    ,
  \end{align*}
  where $\gamma = 0.577\!\:\!...$ denotes the Euler--Mascheroni constant.
\end{theorem}
\Cref{thm:zeta_correlation_murmurations} is a more precise version of \cref{thm:zeta_correlation_murmurations_intro}.

\Cref{sec:kronecker_ratiosconjecture} presents many of the method's main ideas in the simplest case: quadratic Dirichlet characters. The section revolves around proving \cref{dirichlet_murmuration}.
Several comments, remarks, and manipulations the section presents apply throughout the paper. Viz.,
\begin{itemize}
\item
  The presence of an error term in murmurations results is new; the existing results \cite{LOP,zubrilina,bblld,blldshz} only give asymptotics. In the case of \cref{dirichlet_murmuration}, the error term is found to come entirely from the ratios conjecture \cite[Conj.\ 2.6]{conrey_snaith}; see \cref{rem:dirichlet_error}.
\item
  \Cref{rem:dirichet_functional_equation_factor} highlights which piece of the ratios conjecture prediction is the one which ultimately yields murmurations. An analogous piece is present in every example in this paper. The scale invariance in \cref{thm:zeta_logderiv_correlation_murmurations,thm:zeta_correlation_murmurations_intro} can be foreseen from this remark. \Cref{sec:afe} explains why this piece reoccurs in every example, and leads into our companion paper \cite{quadratictwists}.
\item
  \Cref{difference_remark} discusses taking a difference of \cref{dirichlet_murmuration} between two nearby $\x$ values, so as to isolate a short sum over primes.
\item
  \Cref{prop:psi_as_residues} expresses the right hand side of \cref{dirichlet_murmuration} as a sum over zeros of $\zeta$.
\end{itemize}

\Cref{sec:elliptic_curves} is separated in two. \Cref{sec:ec_raw} mirrors the methodology of \cref{sec:kronecker_ratiosconjecture}, and leads up to \cref{elliptic_curve_murmuration}, the analogue of \cref{dirichlet_murmuration}. \Cref{sec:ec_dist} proves \cref{elliptic_curve_murmuration_rho} by combining \cref{elliptic_curve_murmuration} with the distribution of conductors in $\cF(H)$, which was determined for this purpose in \cite{conductor}.
\Cref{elliptic_curve_murmuration_rho} exhibits murmurations for a family in which the conductors are not approximately constant. \Cref{sec:ec_dist} provides a template for murmurations in such families.

\Cref{sec:other_examples} gives several more examples of our method:
\begin{itemize}
\item
  \Cref{sec:kronecker_2} introduces a different template, using Miller's work \cite{miller08} on quadratic Dirichlet characters.
\item
  \Cref{sec:cech} proves \cref{cech_murmuration}.
\item
  \Cref{sec:GRH} proves \cref{newspace_murmuration_GRH}.
\item
  \Cref{sec:elliptic_curve_quadratic_twist} uses work of Huynh--Miller--Morrison \cite{hmm} to show murmurations in families of quadratic twists of elliptic curves assuming ratios conjectures.
\item
  \Cref{sec:zeta_logderiv_correlation} proves \cref{thm:zeta_logderiv_correlation_murmurations}. \Cref{prop:zeta_logderiv_correlation_eval} gives a variation where the left hand side is interpreted as a sum over pairs of zeros.
\item
  \Cref{sec:zeta_correlation} proves the more precise variant \cref{thm:zeta_correlation_murmurations} of \cref{thm:zeta_correlation_murmurations_intro}, which also includes an interpretation of the inverse Mellin transform on the left hand side.
\end{itemize}

\Cref{sec:afe} demonstrates how to construct murmurations using the approximate functional equation. It is a specific term in the approximate functional equation which yields the reoccurring piece in ratios conjectures leading to scale invariance. As an example, it is sketched how to construct murmurations for quadratic characters summed over all integers, as opposed to only prime powers. This endeavour is completed in \cite{quadratictwists}, obtaining for instance:

\begin{theorem}[{Special case of \cite[Thm.\ 1.4]{quadratictwists}}]
  \label{thm:gamma_murmurations}
  Let $\tau$ be a real number and let $\chi$ be an even primitive Dirichlet character (possibly trivial) with conductor $q$. Pick $a \in \Z$ coprime to $q$, and $1 < D_0 < D$. Define
  \begin{align*}
    \cF \coloneqq \{d \,:\, D_0 < d < D,\, \text{\emph{$d$ a fundamental discriminant}},\,d = a \mod q\}.
  \end{align*}
  For any $\tfrac{3}{4} < \delta < 1$,
  in the range
  $(qD)^{1 - \eps} \ll \x \ll (qD)^{1+\eps}$,
  $D^{\delta - \eps} \ll \#\cF \ll D^{\delta + \eps}$,
  and
  $\tau \ll D^{1 - \delta - \eps}$,
  \begin{align*}
    \frac{G(\bar\chi)}{\sqrt{q}}
    \bigg(&\frac{\pi}{q}\bigg)^{i\tau}
    \bar\chi(a) \bigg(\frac{a}{q}\bigg)
    \,\frac{1}{\#\cF} \sum_{d \in \cF}
    \frac{1}{\sqrt\x}
    \sum_{n=1}^\infty
    e^{\nicefrac{-n^2}{x^2}}
    n^{i\tau}\chi(n)\chi_d(n)
    \\
    ={}
    &
    \frac{1}{4\pi i}\int_{\eps - i\infty}^{\eps + i\infty}
    \frac{\halfGamma{s}\halfGamma{1-s+i\tau}}{\halfGamma{s-i\tau}}
    \frac{L(2-2s+2i\tau, \bar{\chi}^2)}{L^{(2)}(3-2s+2i\tau, \bar{\chi}^2)}
    \prod_{\mspace{1.75mu}p \nmid q} \!\left(1 - \frac{1}{(p+1)(1 - \chi^2(p)p^{3-2s+2i\tau})}\right)
    \left(\frac{\pi \x}{q D}\right)^{\!s - \half} ds
    \\
    &
    + O\big(
    q^{\frac{3}{4}}
    D^{-\frac{1}{8} + \left|\delta - \frac{7}{8}\mspace{-2mu}\right|}
    (1 + \one{\delta < \tfrac{7}{8}} \tau^{\frac{1}{4}}) \big)(\tau qD)^\eps
    ,
  \end{align*}
  where $G(\bar\chi)$ is the Gauss sum and $\big(\frac{a}{q}\big)$ is the Kronecker symbol.
\end{theorem}

\section*{Acknowledgements}
We thank Sandro Bettin, Chantal David, Valeriya Kovaleva, Kimball Martin, Steve Miller, Alexey Pozdnyakov, Mike Rubinstein, Nina Snaith, Drew Sutherland, and Jerry Wang for their help.


\section{Murmuration densities}
\label{sec:density}

This section is about \cref{thm:sarnak}, which indicates how to convert between the results in this paper and murmuration densities, a notion introduced by Sarnak in \cite{sarnak_letter}. First some notation.

Let $\starsum*\;$ denote a sum over some subset of positive integers, e.g.\ primes, prime powers, or all integers. The variable of summation will always be denoted $n$. We write $\starsum*_n$ to mean a sum over the entire subset.

Let $\cF_\infty$ denote a family of arithmetic objects $\rep$ with conductor $q_\rep$ and Dirichlet coefficients $a_\rep(n)$. Murmurations results in this paper consider finite subfamilies $\cF(q)$ in which $q_\rep \approx q$ --- our precise requirement is that
\begin{align}
  \label{eq:mellin_1}
  &\frac{1}{\#\cF(q)} \sum_{\rep \in \cF(q)} \frac{1}{\sqrt{qy}} \starsum*_n \,a_\rep(n) \cM^{-1}g\!\left(\frac{n}{qy}\right)
  \sim
  \frac{1}{2\pi i} \int_{c - i\infty}^{c + i\infty} F(s) y^{s-\half} g(s)ds
\end{align}
as $q \to \infty$ for
\begin{itemize}
\item
  some function $F$,
\item
  some real number $c$,
\item
  all real numbers $y$ in an open interval containing $1$, and
\item
  all nice test functions $g$.
\end{itemize}
In \eqref{eq:mellin_1} and elsewhere $\cM$ denotes the Mellin transform (i.e.\ $\cM h(s)$ is the Mellin transform of the function $h$ evaluated at the point $s$), with the inverse $\cM^{-1}$ taken along a suitable vertical line.
%

If, as $q \to \infty$ and $\Delta y \to 0$ jointly in some way,
\begin{align*}
  \cM\!\left\{
  \frac{{\ds\sum_{\rep \in \cF_\infty} \Phi\!\left(\frac{q_\rep}{q}\right) \starsum*_{y < \frac{n}{q} < y + \Delta y} a_\rep(n)\sqrt{n}}}{{\ds\sum_{\rep \in \cF_\infty} \Phi\!\left(\frac{q_\rep}{q}\right) \starsum*_{y < \frac{n}{q} < y + \Delta y} 1}}
  \right\}
  \,\sim\,
  \cM\{M(y)\}\cM\{\Phi(t)t^\delta\} 
\end{align*}
for some distribution $M$ (i.e.\ linear functional acting on test functions), some $\delta \in \R$ depending only on $\cF_\infty$, and all nice test functions $\Phi$, then $M$ is a \textit{murmuration density} in the sense of \cite{sarnak_letter}.

Note that the asymptotic \eqref{eq:mellin_1} is equivalent to
\begin{align*}
  &\cM\!\left\{\frac{1}{\#\cF(q)} \sum_{\rep \in \cF(q)} \frac{1}{\sqrt{qy}} \starsum*_n \,a_\rep(n) \cM^{-1}g\!\left(\frac{n}{qy}\right)\right\}
  \,\sim\,
  F(\thalf - s)g(\thalf - s)
  ,
\end{align*}
which is in the same spirit.

The following theorem gives the conversion between our results and murmuration densities.
\begin{theorem}
  \label{thm:sarnak}
  With notation as above, assume that \eqref{eq:mellin_1} is satisfied. Then the distribution
  \begin{align*}
    \cM^{-1}\mspace{-5mu}\left\{\frac{s}{s-2} F(\thalf - s)\right\}\!(y)
    =
    \frac{1}{2\pi i}\int_{\half - c - i\infty}^{\half - c + i\infty} \frac{s}{s-2} F(\thalf - s) y^{-s}\,ds
  \end{align*}
  is the murmuration density for the family $\cF_\infty$.
\end{theorem}

\begin{proof}
  Let $V$ be a continuously differentiable function of sufficiently quick decay, and let $b(n)$ be a sequence supported on the positive integers.
  By partial summation \cite[Thm.\ 4.2]{apostol},
  \begin{align}
    \nonumber
    \starsum*_{n} \;b(n)\sqrt{n}\cdot\frac{1}{\sqrt{n}} V\!\left( \frac{n}{x} \right)
    &= -\int_0^\infty \frac{\partial}{\partial t} \left[ \frac{1}{\sqrt{t}} V\!\left( \frac{t}{x} \right) \right] \starsum*_{n < t} \;b(n) \sqrt{n} \, dt
    \\
    \nonumber
    &=
    \int_0^\infty \left[ \frac{1}{2t^{\frac{3}{2}}} V\!\left( \frac{t}{x} \right) - \frac{1}{x\sqrt{t}} V'\!\left( \frac{t}{x} \right) \right] \starsum*_{n < t} \;b(n) \sqrt{n} \, dt
    \\
    \nonumber
    &=
    \int_{0}^\infty \frac{1}{\sqrt{t}} \left[ \frac{1}{2} V\!\left( \frac{t}{x} \right) - \frac{t}{x} V'\!\left( \frac{t}{x} \right) \right] \starsum*_{n < t} \;b(n) \sqrt{n} \, \frac{dt}{t}
    \\
    \nonumber
    &=
    \int_{0}^\infty \frac{1}{\sqrt{xt}} \left[ \frac{1}{2} V(t) - t V'(t) \right] \starsum*_{n < xt} \,b(n) \sqrt{n} \, \frac{dt}{t}
    \\
    \label{eq:partial_summation}
    &=
    \int_{0}^\infty \sqrt{t} \left[ \frac{1}{2} V\!\left(\frac{1}{t}\right) - \frac{1}{t} V'\!\left(\frac{1}{t}\right) \right] \frac{1}{\sqrt{x}} \starsum*_{n < \frac{x}{t}} \,b(n) \sqrt{n} \, \frac{dt}{t}
    .
  \end{align}

  Applying \eqref{eq:partial_summation} to \eqref{eq:mellin_1},
  \begin{align}
    \nonumber
    \frac{1}{\#\cF(q)} \sum_{\rep \in \cF(q)}
    &\frac{1}{\sqrt{qy}} \starsum*_{n} \,a_\rep(n) \cM^{-1}g\!\left( \frac{n}{qy} \right)
    \\
    \label{eq:mellin_2}
    &=
    \int_0^\infty \frac{1}{\sqrt{t}} \left[ \frac{1}{2} \cM^{-1}g\!\left( \frac{1}{t} \right) - \frac{1}{t} (\cM^{-1}g)' \!\left( \frac{1}{t} \right) \right]
    \frac{1}{\#\cF(q)} \sum_{\rep \in \cF(q)} \frac{t}{qy} \starsum*_{n < \frac{qy}{t}} a_\rep(n) \sqrt{n} \, \frac{dt}{t}
    .
  \end{align}

  Let
  \begin{align*}
    \cM^{-1}H_q(t) &\coloneqq \frac{1}{\#\cF(q)} \sum_{\rep \in \cF(q)} \frac{1}{qt} \starsum*_{n < qt} a_\rep(n) \sqrt{n}
  \end{align*}
  as a distribution.

  The factor accompanying $\cM^{-1}H_q\big(\frac{y}{t}\big)$ in the integrand of \eqref{eq:mellin_2} is easily seen to be equal to $(2-s)g(\thalf - s)$:
  \begin{align*}
    \int_0^\infty \frac{1}{2\sqrt{t}} \cM^{-1}g\!\left( \frac{1}{t} \right) t^s \, \frac{dt}{t}
    &= \half\int_0^\infty \cM^{-1}g(t) t^{\half - s} \, \frac{dt}{t}
    \\
    &= \thalf g(\thalf - s),
    \shortintertext{and}
    \int_0^\infty - \frac{1}{t^{\frac{3}{2}}} (\cM^{-1}g)' \!\left( \frac{1}{t} \right) t^s \, \frac{dt}{t}
    &= \int_0^\infty - (\cM^{-1}g)'(t) t^{\frac{3}{2} - s} \, \frac{dt}{t}
    \\
    &= (\tfrac{3}{2} - s) \int_0^\infty \cM^{-1}g(t) t^{\half - s} \, \frac{dt}{t}
    \\
    &= (\tfrac{3}{2} - s) g(\thalf - s)
    .
  \end{align*}

  Comparing the Mellin transforms of \eqref{eq:mellin_1} and \eqref{eq:mellin_2} gives
  \begin{align}
    \nonumber
    &F(\thalf - s)g(\thalf - s) \sim (2-s)g(\thalf - s)H_q(s)
    \\
    \label{eq:mellin_3}
    \Longleftrightarrow\quad
    &H_q(s) \sim \frac{F(\thalf - s)}{2-s}
    .
  \end{align}

  Now we express Sarnak's murmuration density in terms of $H_q$. In the sense of distributions,
  \begin{align*}
    \frac{1}{\#\cF(q)} \sum_{\rep \in \cF(q)} \frac{1}{\Delta y} \starsum*_{y < \frac{n}{q} < y + \Delta y} a_\rep(n) \sqrt{n}
    &= \frac{(y + \Delta y) \cM^{-1}H_q(y + \Delta y) - y \cM^{-1}H_q(y)}{\Delta y}
    \\
    &\sim \frac{d}{dy}y\cM^{-1}H_q(y)
  \end{align*}
  as $q \to \infty$ and $\Delta y \to 0$ appropriately.
  
  Substitute the straightforward calculation
  \begin{align*}
    \cM\!\left\{ \frac{d}{dy} y\cM^{-1}H_q(y) \right\} = -sH_q(s)
  \end{align*}
  into \eqref{eq:mellin_3} to obtain
  \begin{align*}
    \cM\!\left\{\frac{1}{\#\cF(q)} \sum_{\rep \in \cF(q)} \frac{1}{\Delta y} \starsum*_{y < \frac{n}{q} < y + \Delta y} a_\rep(n) \sqrt{n}\right\}
    &\sim -sH_q(s)
    \\
    &\sim \frac{s}{s-2}F(\thalf - s)
    .
    \qedhere
  \end{align*}
\end{proof}

\section{Quadratic characters}


Suppose that $2 < T < \x $ with $\x  \not\in \Z$, and that $\chi$ is a primitive nontrivial Dirichlet character modulo $d$. Following the proof of \cite[Thm.\ 12.10]{MV} yields
\begin{align}
  &\label{explicit_formula_dirichlet_1}
  \sum_{p^k < \x }
  \chi(p^k)\log p = -\frac{1}{2\pi i}\int_{\half + \eps - iT}^{\half + \eps + iT} \frac{L'(s,\chi)}{L(s,\chi)}\x^s\,\frac{ds}{s} + O\!\left(\x^{1+\eps} T^{-1+\eps} d^\eps\right).
\end{align}

Throughout this paper we adopt the convention that $p$ always denotes a prime and $p^k$ a prime power; the sum in \eqref{explicit_formula_dirichlet_1} is over $p$ prime and $k \in \Z_{>0}$.

\label{sec:kronecker_ratiosconjecture}

Define $\cF(D) \coloneqq \{d \,:\, 1 < d < D,\, \text{$d$ a fundamental discriminant}\}$. Let $\chi_d$ denote the Kronecker symbol $\left(\frac{d}{\cdot}\right)$, which, for $d \in \cF(D)$, is a real even primitive Dirichlet character modulo $d$.

The following theorem is an immediate consequence of the ratios conjecture \cite[Conj.\ 2.6]{conrey_snaith}. 
\begin{theorem}[Conrey--Snaith {\cite[Thm.\ 2.7]{conrey_snaith}}]\label{ratios_conjecture_dirichlet}
  Assume \cite[Conj.\ 2.6]{conrey_snaith}, that $\frac{1}{\log D} \ll \Re(r) < \tfrac{3}{4}$, and that $\Im(r) \ll D^{1-\eps}$. Then
  \begin{align*}
    \sum_{d \in \cF(D)} \frac{L'(\half + r, \chi_d)}{L(\half + r, \chi_d)} = &\,\,\sum_{d \in \cF(D)} \left[ \frac{\zeta'(1+2r)}{\zeta(1+2r)} + \sum_p \frac{\log p}{(p+1)(p^{1+2r}-1)} \right.\\
      &\left. - \left(\frac{d}{\pi}\right)^{\!-r} \frac{\Gamma\!\left(\frac{1}{4} - \frac{r}{2}\right)}{\Gamma\!\left(\frac{1}{4} + \frac{r}{2}\right)} \zeta(1 - 2r) \prod_p\left(1-\frac{1}{(p+1)p^{1-2r}}-\frac{1}{p+1}\right)\left(1-\frac{1}{p}\right)^{-1} \right] + \cR(D)
  \end{align*}
  with $\cR(D) \ll D^{\half + \eps}$.
\end{theorem}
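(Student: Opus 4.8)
The plan is to derive the theorem from the ratios conjecture \cite[Conj.\ 2.6]{conrey_snaith} by differentiating in the numerator shift. Write $R_D(\alpha;\gamma) \coloneqq \sum_{d\in\cF(D)} \frac{L(\half+\alpha,\chi_d)}{L(\half+\gamma,\chi_d)}$. For $\alpha,\gamma$ ranging over a region of the kind appearing in the hypotheses (real parts $\gg 1/\log D$, imaginary parts $\ll D^{1-\eps}$) and containing the diagonal point $\alpha=\gamma=r$ in its interior, the conjecture supplies an identity
\[
  R_D(\alpha;\gamma) = \sum_{d\in\cF(D)}\left(\frac{\zeta(1+\alpha+\gamma)}{\zeta(1+2\gamma)}\,A(\alpha,\gamma) + \left(\frac{d}{\pi}\right)^{-\alpha} G(\alpha,\gamma)\right) + O\!\left(D^{\half+\eps}\right),
\]
in which $A$ is an absolutely convergent arithmetic Euler product normalized so that $A(r,r)=1$, and $G$ is a $d$-independent function built from the functional-equation gamma ratio $\frac{\Gamma(\frac14-\frac\alpha2)}{\Gamma(\frac14+\frac\alpha2)}$, a ratio of zeta values containing the factor $\zeta(1-\alpha+\gamma)$, and a second arithmetic Euler product. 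The starting point is the elementary identity
\[
  \sum_{d\in\cF(D)}\frac{L'(\half+r,\chi_d)}{L(\half+r,\chi_d)} = \left.\frac{\partial}{\partial\alpha}R_D(\alpha;\gamma)\right|_{\alpha=\gamma=r},
\]
so it remains to differentiate the main term of the conjecture and to check that differentiation does not spoil the error.

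Differentiating the diagonal term is routine: since $A(r,r)=1$,
\[
  \left.\frac{\partial}{\partial\alpha}\!\left(\frac{\zeta(1+\alpha+\gamma)}{\zeta(1+2\gamma)}A(\alpha,\gamma)\right)\right|_{\alpha=\gamma=r} = \frac{\zeta'(1+2r)}{\zeta(1+2r)} + \left.\frac{\partial}{\partial\alpha}\log A(\alpha,\gamma)\right|_{\alpha=\gamma=r},
\]
and differentiating the Euler factors of $A$ one prime at a time produces exactly $\sum_p \frac{\log p}{(p+1)(p^{1+2r}-1)}$; convergence of this prime sum is what forces $\Re(r)>0$.

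The substantive step is the dual term. Although the factor $\zeta(1-\alpha+\gamma)$ is singular along the diagonal $\alpha=\gamma$, the recipe is arranged so that $G$ extends holomorphically across the diagonal and in fact vanishes identically along it; this matches the fact that $R_D(\gamma;\gamma)=\#\cF(D)$ is accounted for entirely by the diagonal term, since $A(\gamma,\gamma)=1$. Because $G(r,r)=0$, the term $-\log(\frac{d}{\pi})(\frac{d}{\pi})^{-r}G(r,r)$ that appears when one differentiates the prefactor $(\frac{d}{\pi})^{-\alpha}$ simply vanishes, so no $\log d$ survives and the dual contribution collapses to $(\frac{d}{\pi})^{-r}\,\partial_\alpha G(\alpha,\gamma)\big|_{\alpha=\gamma=r}$. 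Removing the singularity (cancelling the simple pole of $\zeta(1-\alpha+\gamma)$ against the vanishing of the accompanying Euler product) and then collecting the remaining local factors identifies this with
\[
  -\left(\frac{d}{\pi}\right)^{-r}\frac{\Gamma(\frac14-\frac r2)}{\Gamma(\frac14+\frac r2)}\,\zeta(1-2r)\prod_p\left(1-\frac{1}{(p+1)p^{1-2r}}-\frac{1}{p+1}\right)\!\left(1-\frac1p\right)^{-1}.
\]
Carrying out this bookkeeping — pinning down the order of vanishing of $G$ on the diagonal, verifying that the $\log d$ contributions cancel, and recognizing the surviving Euler product in closed form — is the main obstacle, and it is the part that uses the precise output of the Conrey--Snaith recipe for the quadratic family.

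Finally, the error term $O(D^{\half+\eps})$ of the conjecture is uniform on a disc of radius $\asymp 1/\log D$ about $\alpha=\gamma=r$ lying in the conjecture's region of validity, so by Cauchy's integral formula its derivative in $\alpha$ at $\alpha=r$ is $\ll D^{\half+\eps}\log D \ll D^{\half+\eps}$; assembling the three contributions yields the displayed formula with $\cR(D)\ll D^{\half+\eps}$. (Alternatively, one may simply regard the bound on the derivative of the error as part of what the ratios conjecture asserts.)
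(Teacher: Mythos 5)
The paper does not prove this theorem: it is quoted verbatim from Conrey--Snaith \cite[Thm.\ 2.7]{conrey_snaith}, where it is derived from the ratios conjecture \cite[Conj.\ 2.6]{conrey_snaith}. Your proposal is a reconstruction of that derivation, and its overall structure is correct and matches the way Conrey--Snaith (and later authors such as \cite{miller08,cech,hmm}) pass from a ratios conjecture for $\sum_d L(\half+\alpha,\chi_d)/L(\half+\gamma,\chi_d)$ to a statement about $\sum_d L'/L(\half+r,\chi_d)$: differentiate in $\alpha$ and set $\alpha=\gamma=r$.

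Two points are worth tightening. First, when you write the off-diagonal (dual) term of the conjecture as $(\tfrac{d}{\pi})^{-\alpha}G(\alpha,\gamma)$ with $G$ ``containing the factor $\zeta(1-\alpha+\gamma)$,'' you should say explicitly that this zeta factor sits in the \emph{denominator}, i.e.\ $G$ contains $1/\zeta(1-\alpha+\gamma)$; it is the pole of $\zeta$ at $1$ appearing in the denominator that forces $G$ to vanish to first order on the diagonal. As stated, ``$\zeta(1-\alpha+\gamma)$ is singular, so $G$ vanishes'' reads backwards, even though the conclusion you draw from it is correct. With that fixed, your computation is exactly right: since $G(r,r)=0$, the $\log(d/\pi)$ term from differentiating the prefactor drops out, and what survives is $(\tfrac{d}{\pi})^{-r}\partial_\alpha G\big|_{\alpha=\gamma=r}$; near the diagonal $1/\zeta(1-\alpha+\gamma)\approx \gamma-\alpha$, so $\partial_\alpha$ of it is $-1$, producing the sign and leaving $-\frac{\Gamma(\frac14-\frac r2)}{\Gamma(\frac14+\frac r2)}\zeta(1-2r)A(-r,r)(\tfrac{d}{\pi})^{-r}$, which is the displayed Euler product via $A(-r,r)=\zeta(2)/\zeta(2-2r)$ (cf.\ the remark following the theorem in the paper). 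Second, the diagonal zeta ratio you write, $\zeta(1+\alpha+\gamma)/\zeta(1+2\gamma)$, is not literally the one in Conrey--Snaith's formulation (theirs has the symplectic shape with $\zeta(1+2\alpha)$ in the numerator), but since you only take $\partial_\alpha$ and then set $\alpha=\gamma$, both choices give $\zeta'(1+2r)/\zeta(1+2r)$ and your conclusion is unaffected; still, quoting the conjecture precisely would make the check that $A_\alpha(r,r)=\sum_p \frac{\log p}{(p+1)(p^{1+2r}-1)}$ unambiguous. The treatment of the error term (Cauchy on a disc of radius $\asymp 1/\log D$, or simply taking the derivative bound as part of what the conjecture asserts) is the standard move and is fine.
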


\begin{remark} It was observed in \cite[Lemma 2.4]{miller08} and also pointed out to us by Mike Rubinstein that
  \begin{align*}
    \prod_p\left(1-\frac{1}{(p+1)p^{1-2r}}-\frac{1}{p+1}\right)\left(1-\frac{1}{p}\right)^{-1} = \frac{\zeta(2)}{\zeta(2-2r)}.
  \end{align*}
  
\end{remark}

\begin{remark}
  \label{rem:dirichet_functional_equation_factor}
  The factor of $\big(\tfrac{d}{\pi}\big)^{-r}$ in the last term of \cref{ratios_conjecture_dirichlet} is the one which will ultimately yield murmurations. It comes from the analytic conductor factor in the term of the approximate functional equation which uses the $L$-function's functional equation. Analogous remarks hold in every other example we investigate in this paper.
  See also \cref{sec:afe}.
\end{remark}



\begin{proof}[Proof of \cref{dirichlet_murmuration}]
Averaging \eqref{explicit_formula_dirichlet_1} over $\cF_\chi$ yields
\begin{align*}
  \frac{1}{\#\cF_\chi}&\sum_{d \in \cF_\chi}\sum_{p^k < \x } \chi_d(p^k)\log p = -\frac{1}{2\pi i}\int_{\half + \eps - iT}^{\half + \eps + iT} \frac{1}{\#\cF_\chi}\sum_{d \in \cF_\chi}\frac{L'(s,\chi_d)}{L(s,\chi_d)}\x^s\,\frac{ds}{s} + O\!\left(\x^{1+\eps} T^{-1+\eps} D^\eps\right).
\end{align*}

Assuming \cite[Conj.\ 2.6]{conrey_snaith}, \cref{ratios_conjecture_dirichlet} gives
\begin{align*}
  -\frac{1}{2\pi i}\int_{\half+\eps-iT}^{\half+\eps+iT} \frac{1}{\#\cF_\chi}\sum_{d \in \cF_\chi} \frac{L'(s,\chi_d)}{L(s,\chi_d)} \x^s\,\frac{ds}{s}
  ={}&-\frac{1}{2\pi i}\int_{\half+\eps-iT}^{\half+\eps+iT} \frac{\zeta'(2s)}{\zeta(2s)} \x^s\,\frac{ds}{s}\\
  &-\frac{1}{2\pi i}\int_{\half+\eps-iT}^{\half+\eps+iT} \sum_p \frac{\log p}{(p+1)(p^{2s}-1)} \x^s\,\frac{ds}{s}\\
  &+\frac{1}{2\pi i}\int_{\half+\eps-iT}^{\half+\eps+iT} \frac{\pi^2}{6}\frac{\Gamma\!\left(\frac{1-s}{2}\right)}{\Gamma\!\left(\frac{s}{2}\right)} \frac{\zeta(2-2s)}{\zeta(3-2s)} \frac{1}{\#\cF_\chi} \sum_{d \in \cF_\chi} \left(\frac{\pi}{d}\right)^{s - \half}  \x^s\,\frac{ds}{s}\\
  &-\frac{1}{2\pi i}\int_{\half+\eps-iT}^{\half+\eps+iT} \frac{\cR(D) - \cR(D_0)}{\#\cF_\chi} \x^s\,\frac{ds}{s}.
\end{align*}

From \cite[\S 12]{MV},
\begin{align}\label{zeta_explicit_formula}
  -\frac{1}{2\pi i}\int_{\half+\eps-iT}^{\half+\eps+iT} \frac{\zeta'(2s)}{\zeta(2s)} \x^s\,\frac{ds}{s} = \sum_{p^k < \x^\half} \log p + O\!\left(\x^{\half+\eps}T^{-1+\eps} + \x^\eps\right).
\end{align}

By inspection,
\begin{align}
  &-\frac{1}{2\pi i}\int_{\half+\eps-iT}^{\half+\eps+iT} \sum_p \frac{\log p}{(p+1)(p^{2s}-1)} \x^s\,\frac{ds}{s} \ll \x^\eps T^\eps + \x^{\half+\eps} T^{-1} \label{Aderiv_bound}\\
  \shortintertext{and}
  &-\frac{1}{2\pi i}\int_{\half+\eps-iT}^{\half+\eps+iT} \frac{\cR(D) - \cR(D_0)}{\#\cF_\chi} \x^s\,\frac{ds}{s} \ll \x^{\half+\eps} T^\eps \cR(D) \#\cF_\chi^{-1}.\nonumber
\end{align}

Collecting terms then yields \cref{dirichlet_murmuration}.
\end{proof}

\begin{remark}
  \label{rem:dirichlet_error}
  The error term of \cref{dirichlet_murmuration} comes more or less entirely from the ratios conjecture \cref{ratios_conjecture_dirichlet}: the term $O(\x^{\half + \eps} T^\eps \cR(D) \#\cF_\chi^{-1})$ explicitly so, and the term $O(\x^{1+\eps}T^{-1+\eps}D^\eps)$ essentially because of the restriction $T \ll D_0^{1 - \eps}$ in the hypothesis of \cref{ratios_conjecture_dirichlet}. Note that the restriction $T \ll \x^{1-\eps}$ coming from \eqref{explicit_formula_dirichlet_1} can easily be dropped if necessary, at the expense requiring that $\x $ not be taken very near a prime power. 
\end{remark}


Writing $\x  = Dy$ in \cref{dirichlet_murmuration} yields
\begin{align}\label{eq:murmuration_y}
  &\frac{1}{\#\cF_\chi}\sum_{d \in \cF_\chi}\frac{1}{\sqrt{D y}}\sum_{p < Dy} \chi_d(p)\log p
  \,\approx\,
  \frac{1}{2\pi i}\int_{\half+\eps-iT}^{\half+\eps+iT} \frac{\pi^2}{6}\frac{\Gamma\!\left(\frac{1-s}{2}\right)}{\Gamma\!\left(\frac{s}{2}\right)} \frac{\zeta(2-2s)}{\zeta(3-2s)}\left(\pi y\right)^{s - \half} \,\frac{ds}{s}.
\end{align}
The phenomenon of murmurations as discussed in \cite{LOP} and \cite{zubrilina} is analogous to the observation that the right hand side above doesn't depend on $\cF_\chi$. A more precise form of \eqref{eq:murmuration_y} could be formulated by emulating the style of \cref{elliptic_curve_murmuration_rho}.


\begin{remark}\label{difference_remark}
  By taking the difference of \cref{dirichlet_murmuration} evaluated at $\x $ and $\x  + \x^\delta$ one can isolate the contribution of primes in an interval of length $\x^\delta$. For any $\thalf < \delta < 1$ and any $\tfrac{3}{2} - \delta < \theta < 2\delta$, there exist $\alpha, \beta \in \R$ such that
  \begin{enumerate}
    \item $\x^\theta \ll D \ll \x^\theta,\,\, \x^\alpha \ll T \ll \x^\alpha,\,\, \x^\beta \ll \#\cF_\chi \ll \x^\beta$,
    \item $\alpha < 1$ and $\alpha < \theta$,
    \item $\beta < \theta$, and
    \item the ratio of the main term of \cref{dirichlet_murmuration} to the error term is $\gg \x^\eta$ for some $\eta > 0$ as $\x  \to \infty$.
  \end{enumerate}

  The last point above is saying more or less that, for any $\thalf < \delta < 1$ and $\tfrac{3}{2} - \delta < \theta < 2\delta$, \cref{dirichlet_murmuration} allows one to obtain a meaningful main term for murmurations for a range of primes of length $\x^\delta$ and roughly $\x^\beta$ discriminants of size roughly $\x^\theta$.

  The condition $\alpha < 1$ is included because our explicit formula \eqref{explicit_formula_dirichlet_1} requires $T < \x $ (an additional error term in Perron's formula appears when $T > \x $). 
  The condition $\alpha < \theta$ is included so that the condition $\Im(r) \ll D^{1-\eps}$ from \cref{ratios_conjecture_dirichlet} is satisfied. The condition $\beta < \theta$ is included because necessarily $\#\cF_\chi \ll D$. 
  It is plausible that more careful analysis could allow one to take smaller values of $\delta$.  
\end{remark}


It may be of interest to write the inverse Mellin transform on right hand side of \cref{dirichlet_murmuration} as a sum of residues. For $\thalf < c < \tfrac{3}{4}$ and $T > 4$,
define $\varphi$ and $\psi$ by
\begin{align}
  \label{def:phi}
  \varphi(y,s) &\coloneqq \frac{\pi^2}{6}\frac{\Gamma\!\left(\frac{1-s}{2}\right)}{\Gamma\!\left(\frac{s}{2}\right)} \frac{\zeta(2-2s)}{\zeta(3-2s)} y^{s - \half}\frac{1}{s}
  \shortintertext{and}
  \label{def:psi}
  \psi(y) &\coloneqq \frac{1}{2\pi i}\int_{c - iT}^{c + iT} \varphi(y,s)\,ds.
\end{align}

\begin{proposition}
  \label{prop:psi_as_residues}
  Assume the Riemann hypothesis for $\zeta$ and that all of $\zeta$'s zeros are simple. For $y < T$ and some $T < T_* < T+1$,
  \begin{align*}
    \psi(y)
    &= \sum_{\substack{|\gamma| < T_* \\ \zeta(\thalf + i\gamma) = 0}} \!\!\!\!\!\!\!\! -\frac{\pi^2}{12} \frac{\Gamma(-\tfrac{1}{8} + \tfrac{i\gamma}{4})}{\Gamma(\tfrac{5}{8} - \tfrac{i\gamma}{4})} \frac{\zeta(-\thalf + i\gamma)}{\zeta'(\thalf + i\gamma)} \frac{y^{\frac{3}{4} - \frac{i\gamma}{2}}}{\frac{5}{4} - \frac{i\gamma}{2}}
    + \sum_{k=0}^\infty -\frac{\pi^2}{12} \frac{\Gamma(-\tfrac{3}{4} - \tfrac{k}{2})}{\Gamma(\tfrac{5}{4} + \tfrac{k}{2})} \frac{\zeta(-3 - 2k)}{\zeta'(-2 - 2k)} \frac{y^{2 + k}}{\frac{5}{2} + k}
    + O\!\left(\left(\frac{y}{T}\right)^{\!\frac{3}{4}} T^{\eps}\right)
    \\
    &= \sum_{\substack{|\Im(w)| < T_* \\ \zeta(3-2w) = 0}} -\half \varphi(y,w) + O\!\left(\left(\frac{y}{T}\right)^{\!\frac{3}{4}} T^{\eps}\right)\!.
  \end{align*}
\end{proposition}

The assumption in \cref{prop:psi_as_residues} that the zeros of $\zeta$ are simple is made for convenience and can easily be dropped. We would expect that results similar to \cref{prop:psi_as_residues} hold for \cref{elliptic_curve_murmuration_rho,cech_murmuration,dirichlet_murmuration_GRH,newspace_murmuration_GRH,elliptic_curve_murmuration_GRH}.

The remainder of this section is dedicated to proving \cref{prop:psi_as_residues}. We begin by introducing \textit{Stirling's formula}. 

\cite[8.344]{GR} gives, for $|s| \to \infty$ with $-\pi < \arg(s) < \pi$ and bounded away from $\pm\pi$,
\begin{align}
  \label{eq:stirling_log}
  \log \Gamma(s) = (s - \thalf)\log s - s + \log\sqrt{2\pi} + O\!\left(\tfrac{1}{s}\right)
  .
\end{align}
The same reference gives more precise expansions too if necessary. Thus,
\begin{align}
  \label{eq:stirling_arg}
  \Gamma(s) = \sqrt{2\pi} \,s^{s - \half} e^{-s} \left(1 + O\!\left(\tfrac{1}{s}\right)\right)
\end{align}
and, for $s = \sigma + it$,
\begin{align}
  \label{eq:stirling}
  |\Gamma(s)| = \sqrt{2\pi} \,|s|^{\sigma - \half} e^{-\sigma} e^{-\arg(s) t} \left(1 + O\!\left(\tfrac{1}{s}\right)\right)
  .
\end{align}
In many contexts $\sigma$ is absolutely bounded and one is interested in the $t$ aspect of \eqref{eq:stirling} only. In such cases, the following slightly simpler approximation is handy: 
\begin{align}
  \label{eq:stirling_t}
  |\Gamma(\sigma + it)| = \sqrt{2\pi} \,|t|^{\sigma - \half} e^{-\frac{\pi}{2}|t|} \left(1 + O\!\left(\tfrac{1}{t}\right)\right)
  .
\end{align}

\begin{remark}
  \label{rem:stirling}
  We will often deal with products and ratios of gamma functions. Using only \eqref{eq:stirling}, one deduces e.g.\
  \begin{align}
    \nonumber
    |\Gamma(s_1)\Gamma(s_2)|
    =
    2\pi |s_1|^{\sigma_1 - \half} |s_2|^{\sigma_2 - \half} e^{-\sigma_1 - \sigma_2} e^{-\arg(s_1) t_1 - \arg(s_2)t_2}
    \left(1 + O\!\left(\frac{\Gamma(s_2)}{s_1} + \frac{\Gamma(s_1)}{s_2}\right) \right)
    .
  \end{align}
  However, returning to \eqref{eq:stirling_log}, we obtain the same approximation with a much better error term:
  \begin{align}
    \nonumber
    |\Gamma(s_1)\Gamma(s_2)|
    =
    2\pi |s_1|^{\sigma_1 - \half} |s_2|^{\sigma_2 - \half} e^{-\sigma_1 - \sigma_2} e^{-\arg(s_1) t_1 - \arg(s_2)t_2}
    \left(1 + O\!\left(\frac{1}{s_1} + \frac{1}{s_2}\right) \right)
    .
  \end{align}
\end{remark}


With Stirling's formula we bound the integrand $\varphi$ of the inverse Mellin transform $\psi$ which features in \cref{prop:psi_as_residues}.
\begin{lemma}
  \label{lemma:psi_bound}
  Assume the Riemann hypothesis for $\zeta$.
  Let $\varphi$ be as defined in \eqref{def:phi}.
  For $s = \sigma + it$ with $\sigma > \thalf$ and bounded away from integers and the lines $\Re(s) = \thalf, \tfrac{5}{4}$,
  \begin{align*}
    \varphi(y,s)
    &\ll
    \begin{cases}
      |s|^{-\sigma - \half + \eps} & \thalf+\eps < \sigma \leqslant \tfrac{3}{4}\\
      |s|^{\sigma - 2 + \eps} & \tfrac{3}{4} \leqslant \sigma < \tfrac{5}{4}-\eps\\
      |s|^{-\sigma + \half + \eps}y^{\sigma - \half} & \tfrac{5}{4}+\eps < \sigma.
    \end{cases}
  \end{align*}
\end{lemma}
\begin{proof}
  By \eqref{eq:stirling} we have
  \begin{align*}
    \frac{\Gamma\!\left(\frac{1-s}{2}\right)}{\Gamma\!\left(\frac{s}{2}\right)}
    &\ll |s|^{\half - \sigma}.
  \end{align*}

  For $\thalf+\eps < \sigma \leqslant \tfrac{3}{4}$, as a result of the Phragm\'en--Lindel\"of principle \cite[Thm.\ 8.2.1]{goldfeld:book}, the Lindel\"of hypothesis \cite[Thm.\ 13.18]{MV} (which follows from RH for $\zeta$), and \cite[Thm.\ 5.19]{IK}, we have
  \begin{align*}
    \frac{\zeta(2-2s)}{\zeta(3-2s)}
    \ll |s|^\eps.
  \end{align*}
  For $\tfrac{3}{4} \leqslant \sigma < \tfrac{5}{4}-\eps$, the functional equation and \eqref{eq:stirling} give
  \begin{align*}
    \frac{\zeta(2-2s)}{\zeta(3-2s)}
    &= \pi^{2s - \frac{3}{2}}\frac{\Gamma(s-\thalf)}{\Gamma(1-s)}\frac{\zeta(2s-1)}{\zeta(3-2s)}\\
    &\ll |s|^{2\sigma - \frac{3}{2} + \eps}.
  \end{align*}
  For $\tfrac{5}{4}+\eps < \sigma$,
  \begin{align*}
    \frac{\zeta(2-2s)}{\zeta(3-2s)}
    &= \pi^2\frac{\Gamma(s-\thalf)\Gamma(\tfrac{3}{2}-s)}{\Gamma(1-s)\Gamma(s-1)}\frac{\zeta(2s-1)}{\zeta(2s-2)}\\
    &\ll |s|^{1+\eps}.
  \end{align*}
  Combining these observations yields \cref{lemma:psi_bound}.
\end{proof}

For the remainder of this section we assume the Riemann hypothesis for $\zeta$ and that its zeros are simple. Recall that $\psi$ is defined in \eqref{def:psi}.

\begin{lemma}
  \label{lemma:psi_converges}
  $\psi(y)$ converges for all $y \in \R_{>0}$.
\end{lemma}
\begin{proof}
  Immediate consequence of \cref{lemma:psi_bound}.
\end{proof}

Our goal is to evaluate $\psi(y)$ by shifting the contour to the right. We will now bound the top, bottom, and right segments of the resulting rectangle.

\begin{lemma}
  \label{lemma:psi_vertical}
  \begin{align*}
    \frac{1}{2\pi i}\int_{M - iT}^{M + iT} \varphi(y,s)\,ds
    \ll \left(M\left(\frac{y}{M}\right)^{M-\half} + \frac{T}{M}\left(\frac{y}{T}\right)^{M-\half}\right) M^\eps T^\eps.
  \end{align*}
\end{lemma}
\begin{proof}
  Straightforward consequence of \cref{lemma:psi_bound}.
\end{proof}

\begin{lemma}
  \label{lemma:psi_horizontal}
  For $y < T$, there exists a path $\ell$ from $c + iT$ to $M + iT$ with $M > \tfrac{5}{4}$ such that
  \begin{align*}
    \frac{1}{2\pi i}\int_\ell \varphi(y,s)\,ds
    \ll \left(\frac{y}{T}\right)^{\frac{3}{4}} T^\eps,
  \end{align*}
  and idem for $\ell$ from $c - iT$ to $M - iT$.
\end{lemma}
\begin{proof}
  Let $\eps_0 < \eps$. We handle the integral in each of the following four regions separately:
  \begin{itemize}
  \item
    $c \leqslant \sigma < \tfrac{3}{4}$
  \item
    $\tfrac{3}{4} \leqslant \sigma < \tfrac{5}{4} - \eps_0$
  \item
    $\tfrac{5}{4} - \eps_0 \leqslant \sigma < \tfrac{5}{4} + \eps_0$
  \item
    $\tfrac{5}{4} + \eps_0 \leqslant \sigma$.
  \end{itemize}

  When $c \leqslant \sigma < \tfrac{3}{4}$, by \cref{lemma:psi_bound} we have
  \begin{align*}
    \int_{c + iT}^{\frac{3}{4} + iT} \varphi(y,s)\,ds
    &\ll \int_c^{\frac{3}{4}} |\sigma + iT|^{-\sigma - \half + \eps} d\sigma\\
    &\ll T^{-c - \half + \eps}.
  \end{align*}

  When $\tfrac{3}{4} \leqslant \sigma < \tfrac{5}{4} - \eps_0$,
  \begin{align*}
    \int_{\frac{3}{4} + iT}^{\frac{5}{4}-\eps_0 + iT} \varphi(y,s)\,ds
    &\ll \int_{\frac{3}{4}}^{\frac{5}{4}-\eps_0} |\sigma + iT|^{\sigma - 2 + \eps} d\sigma\\
    &\ll T^{-\frac{3}{4} + \eps}.
  \end{align*}

  Suppose $\tfrac{5}{4} - \eps_0 \leqslant \sigma < \tfrac{5}{4} + \eps_0$.
  By \cite[Thm.\ 10.13]{MV} there exists $0 < \delta < 1$ such that $|\sigma + i(T + \delta) - \rho| \gg \tfrac{1}{\log T}$ for all zeros $\rho$ of $\zeta$, where the implied constant is absolute. Choose $\delta$ as such. Then
  \begin{align*}
    \left(\int_{\frac{5}{4}-\eps_0 + iT}^{\frac{5}{4}-\eps_0 + i(T+\delta)} + \int_{\frac{5}{4}-\eps_0 + i(T+\delta)}^{\frac{5}{4}+\eps_0 + i(T+\delta)} + \int_{\frac{5}{4}+\eps_0 + i(T+\delta)}^{\frac{5}{4}+\eps_0 + iT} \right) \varphi(y,s)\,ds
    \ll T^{-\frac{3}{4} + \eps}.
  \end{align*}

  Finally, when $\tfrac{5}{4} + \eps_0 \leqslant \sigma$,
  \begin{align*}
    \int_{\frac{5}{4}+\eps_0 + iT}^{M + iT} \varphi(y,s)\,ds
    &\ll \int_{\frac{5}{4}+\eps_0}^M \left|\frac{y}{\sigma + iT}\right|^{\sigma - \half} |\sigma + iT|^\eps d\sigma\\
    &\ll \left(\frac{y}{T}\right)^{\frac{3}{4}} T^\eps.
  \end{align*}

  A path from $c - iT$ to $M - iT$ satisfying the lemma's conclusion is shown to exist in an identical manner.
\end{proof}

\begin{proof}[Proof of \cref{prop:psi_as_residues}]
  Consider the integral of $\varphi$ 
  along the contour
  \begin{itemize}
    \item
      from $c - iT$ to $c + iT$ in a straight line,
    \item
      then to $M + iT$ along a path satisfying the conclusion of \cref{lemma:psi_horizontal},
    \item
      then to $M - iT$ in a straight line,
    \item
      then to $c - iT$ along a path satisfying the conclusion of \cref{lemma:psi_horizontal}.
  \end{itemize}
  The result of \cref{prop:psi_as_residues} then follows from \cref{lemma:psi_converges,lemma:psi_vertical,lemma:psi_horizontal} and letting $M \to \infty$. The poles of $\varphi$ are at the zeros of $\zeta(3 - 2s)$. The convergence of the sum over the nontrivial zeros of $\zeta$ in \cref{prop:psi_as_residues} as $T \to \infty$ follows from the fact that that the integral along the contour described above converges. Note that this sum does not converge absolutely.
\end{proof}



\section{Elliptic curves ordered by height}\label{sec:elliptic_curves}

In this section we prove \cref{elliptic_curve_murmuration_rho}. In \cref{sec:ec_raw} we use the ratios conjecture \cite[Thm.\ 3.8]{DHP} to prove \cref{elliptic_curve_murmuration}, much like how 
\cite[Thm.\ 2.7]{conrey_snaith} led to \cref{dirichlet_murmuration}.

A characteristic feature of murmurations is their invariance under a simultaneous scaling of the range of primes being averaged and the range of analytic conductors. For example, this property is highlighted in \eqref{eq:murmuration_y}, which follows straightforwardly from \cref{dirichlet_murmuration}. In contrast, \cref{elliptic_curve_murmuration} does not clearly exhibit this sort of ``$p/N$-invariance'', as it is unclear how the distribution of conductors $N_E$ in the family $\cF(H)$ behaves as $H$ grows. To this end, in \cite{conductor} we prove that, for any $\lambda_1 > \lambda_0 > \tfrac{4464}{\log H}$,
\begin{align}\label{conductor_distribution}
  &\frac{\#\!\left\{ E \in \cF(H) \,:\, \lambda_0 < \frac{N_E}{H} < \lambda_1 \right\}}{\#\cF(H)} = F_N(\lambda_1) - F_N(\lambda_0) + O( (\log H)^{-1+\eps})
\end{align}
and
\begin{align}\label{small_conductors}
  &\frac{\#\!\left\{E \in \cF(H) \,:\, \frac{N_E}{H} < \lambda_0 \right\}}{\#\cF(H)} \ll \lambda_0^{\frac{5}{6}},
\end{align}
where $F_N$ is as in \cref{FN_def}. We plot $F_N$ and its derivative in \cite{conductor}.

In \cref{sec:ec_dist} we prove \cref{elliptic_curve_murmuration_rho}, which follows from \cref{elliptic_curve_murmuration} analogously to how \eqref{eq:murmuration_y} follows from \cref{dirichlet_murmuration}, and which makes clear the presence of the characteristic $p/N$-invariance. Our formulation here can also be applied to \eqref{eq:murmuration_y} to obtain a more precise statement.

\subsection{Ratios conjecture}\label{sec:ec_raw}

Let $E_{a,b}$ denote the elliptic curve with Weierstrass equation $y^2 = x^3 + ax + b$. Fix $H > 0$ and $r,t \in \Z$ such that $3\nmid r$ and $2\nmid t$. The following family of elliptic curves ordered by height appeared in \cite{young:elliptic_curves} and subsequently \cite{DHP}.
\begin{definition}\label{cF_def}
\begin{align*}
  \cF(H) \coloneqq \left\{E_{a,b} \,:\, a = r\mod 6,\, b = t\mod 6,\, |a| \leqslant H^{\frac{1}{3}},\, |b| \leqslant H^{\half},\, p^4 \mid a \implies p^6 \nmid b\right\}.
\end{align*}
\end{definition}
The conditions on $a$ and $b$ above ensure that no two curves in $\cF(H)$ are isomorphic and that the model $E_{a,b} \,:\, y^2 = x^3 + ax + b$ is minimal at every prime \cite[\S 2.1]{young:elliptic_curves}.

Given an elliptic curve $E$, let $N_E$ denote its conductor and $\rootnum_E$ its root number. We define the $L$-function attached to $E$ via the Euler product
\begin{align*}
  L(s,E) \coloneqq \prod_{p\mid N_E}\left(1 - \frac{a_p(E)}{p^{s+\half}}\right)^{-1} \prod_{p\nmid N_E}\left(1 - \frac{\alpha_p}{p^s}\right)^{-1}\left(1 - \frac{\overline{\alpha}_p}{p^s}\right)^{-1}
\end{align*}
(where $\overline{\alpha}_p$ is the complex conjugate of $\alpha_p$) with $|\alpha_p| = |\overline{\alpha}_p| = 1$ and $\alpha_p + \overline{\alpha}_p = \frac{a_p(E)}{p^\half}$. With this normalization, the functional equation for $L(s,E)$ is given by \cite[\S 2.2]{young:elliptic_curves}
\begin{align*}
  \Lambda(s,E) &\coloneqq \left(\frac{\sqrt{N_E}}{2\pi}\right)^{s+\half}\Gamma(s + \thalf)L(s,E)\\
  &= \rootnum_E\Lambda(1-s,E).
\end{align*}

Let $\alpha$ and $\gamma$ be complex numbers satisfying $\Re(\alpha), \Re(\gamma) > 0$, and let $\mathrm{Tr}_k(p)$ denote the trace of the Hecke operator at $p$ on the space of level $1$ weight $k$ holomorphic cusp forms. 
\begin{definition}[{\cite[(3.38)]{DHP}}]\label{A_def}
\begin{align*}
  A(\alpha,\gamma) \coloneqq \frac{\zeta(1+\alpha+\gamma)}{\zeta(1+2\gamma)}\prod_{p=2,3} \frac{1 - a_p(E_{r,t})p^{-1-\gamma} + p^{-1-2\gamma}}{1 - a_p(E_{r,t})p^{-1-\alpha} + p^{-1-2\alpha}} \cdot \prod_{p > 3} \left[1 + \left(1 - \frac{p^9 - 1}{p^{10} - 1}\right)\left(\frac{1}{p^{1+2\gamma}} - \frac{1}{p^{1 + \alpha + \gamma}}\right.\right.&\\
    + \left.\left.\frac{p^{-2-\alpha-\gamma} - p^{-2-2\gamma}}{p^{2+2\alpha}-1} + \frac{p^{1+2\alpha+\gamma} - p^{1+\alpha+2\gamma} + p^\gamma - p^\alpha}{p^{\frac{3}{2} + \alpha + 2\gamma}} \sum_{m=5}^\infty \frac{\mathrm{Tr}_{2m+2}(p)}{p^{2m(\alpha+1) + \half}}\right)\right].&
\end{align*}
\end{definition}

The function $A(\alpha,\gamma)$ extends to a nonzero holomorphic function on $\Re(\alpha), \Re(\gamma) > -\tfrac{1}{4}$, and $A(r,r) = 1$ in this region \cite[Thm.\ 3.2]{DHP}. Henceforth when we refer to the function $A$ we mean this holomorphic extension.

The factors at $p = 2,3$ in the definition of $A(\alpha,\gamma)$ depend on $a_p(E_{r,t})$. For convenience we give these values in \cref{table_a2a3}.
%
\begin{table}[H]
\begin{tabular}{|c|c|c||c|c|c|}
  \hline
  $r$, $t$ & $a_2(E_{r,t})$ & $a_3(E_{r,t})$ & $r$, $t$ & $a_2(E_{r,t})$ & $a_3(E_{r,t})$ \rule{0pt}{1em}\\
  [0.2ex]
  \hline
  $1,1$ & $0$ & $0$  & $2,1$ & $0$ & $-3$ \rule{0pt}{1em}\\
  $1,3$ & $0$ & $0$  & $2,3$ & $0$ & $0$ \rule{0pt}{1em}\\
  $1,5$ & $0$ &  $0$ & $2,5$ & $0$ & $3$ \rule{0pt}{1em}\\
  \hline
  $4,t$ & \multicolumn{2}{c||}{Same as $1,t$} & $5,t$ & \multicolumn{2}{c|}{Same as $2,t$} \rule{0pt}{1em}\\
  \hline
\end{tabular}
\begin{tablecap}\label{table_a2a3}
  Values of $a_p(E_{r,t})$ for $p = 2,3$ and $r,t$ as in the definition of $\cF(H)$.
\end{tablecap}
\vspace{-\baselineskip}
\end{table}

The function $A_\alpha(r,r)$ is defined \cite[(3.42)]{DHP}
\begin{align*}
  A_\alpha(r,r) \coloneqq \left.\frac{\dee}{\dee \alpha}A(\alpha,\gamma)\right|_{\alpha = \gamma = r}.
\end{align*}
Note that $A(\alpha,\gamma)$ and $A_\alpha(r,r)$ are bounded in the regions $\Re(\alpha), \Re(\gamma) > -\tfrac{1}{4}$ and $\Re(r) > -\tfrac{1}{4}$ respectively.

An analogue of \cite[Thm.\ 2.7]{conrey_snaith} for $\cF(H)$ is given in \cite[Thm.\ 3.8]{DHP}.
\begin{theorem}[David--Huynh--Parks {\cite[Thm.\ 3.8]{DHP}}]\label{ratios_conjecture_elliptic_curves}
  Assume \cite[Conj.\ 3.7]{DHP}, that $\Re(r) \gg \frac{1}{\log H}$ and $\Im(r) \ll H^{1-\eps}$. With the notation above,
  \begin{align*}
    \sum_{E \in \cF(H)} \frac{L'(\half + r, E)}{L(\half + r, E)} = &\,\,\sum_{E \in \cF(H)}\left[-\frac{\zeta'(1+2r)}{\zeta(1+2r)} + A_\alpha(r,r)\right.\\
    &\left. -\rootnum_E\left(\frac{N_E}{4\pi^2}\right)^{-r}\frac{\Gamma(1-r)}{\Gamma(1+r)}\zeta(1+2r)A(-r,r)\right] + \cR(H)
  \end{align*}
  with $\cR(H) \ll H^{\frac{1}{3} + \eps}$.
\end{theorem}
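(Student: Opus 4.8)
The plan is to derive \cref{ratios_conjecture_elliptic_curves} from the ratios conjecture \cite[Conj.\ 3.7]{DHP}, which --- following the recipe of \cite{conrey_snaith} --- predicts the shifted \emph{ratio} average: for $\alpha,\gamma$ in a suitable region,
\[
  \frac{1}{\#\cF(H)}\sum_{E\in\cF(H)}\frac{L(\half+\alpha,E)}{L(\half+\gamma,E)}
  = \frac{1}{\#\cF(H)}\sum_{E\in\cF(H)}\left[\mathrm{Diag}(\alpha,\gamma)+\omega_E\left(\frac{N_E}{4\pi^2}\right)^{-\alpha}\frac{\Gamma(1-\alpha)}{\Gamma(1+\gamma)}\,\mathrm{Dual}(\alpha,\gamma)\right]+O\!\left(\frac{\cR(H)}{\#\cF(H)}\right),
\]
where the error $\cR(H)\ll H^{\frac13+\eps}$ is inherited from the error in the ratio-average prediction, $\mathrm{Diag}(\alpha,\gamma)$ is built from $A(\alpha,\gamma)$ of \cref{A_def} together with the $\zeta$-quotient the recipe produces, and $\mathrm{Dual}(\alpha,\gamma)$ is the reflected arithmetic/$\zeta$-factor (essentially $A(-\alpha,\gamma)$ up to $\zeta$-factors). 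The dual term --- whose factor $\omega_E(N_E/4\pi^2)^{-\alpha}$ is precisely the analytic-conductor factor flagged in the remark after \cref{ratios_conjecture_dirichlet} as the source of murmurations --- comes from replacing $L(\half+\alpha,E)$ by the summand of its approximate functional equation that invokes $\Lambda(s,E)=\omega_E\Lambda(1-s,E)$.

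Second, I would pass from the ratio average to $\frac{1}{\#\cF(H)}\sum_{E}\tfrac{L'}{L}(\half+r,E)$ by the standard device
\[
  \frac{L'(\half+r,E)}{L(\half+r,E)}=\frac{1}{2\pi i}\oint_{|\alpha-r|=\delta}\frac{1}{(\alpha-r)^{2}}\,\frac{L(\half+\alpha,E)}{L(\half+r,E)}\,d\alpha,
\]
valid since $\alpha\mapsto L(\half+\alpha,E)/L(\half+r,E)$ is holomorphic near $\alpha=r$ with value $1$ there: substitute the closed form above (with $\gamma=r$) and evaluate the contour integral term by term, taking the usual care with the fact that the ratios formula --- unlike the true ratio --- is singular at $\alpha=r$. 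Using $A(r,r)=1$, the diagonal term contributes $-\tfrac{\zeta'(1+2r)}{\zeta(1+2r)}+A_\alpha(r,r)$, with $A_\alpha(r,r)=\partial_\alpha A(\alpha,\gamma)|_{\alpha=\gamma=r}$ as in \cite[(3.42)]{DHP} and the $-\zeta'/\zeta$ the separate $\zeta$-quotient contribution; the dual term contributes the relevant Laurent coefficient of its integrand at $\alpha=r$, a computation in which the $-\log(N_E/4\pi^2)$ that $\partial_\alpha(N_E/4\pi^2)^{-\alpha}$ would produce is absorbed, leaving precisely $-\omega_E(N_E/4\pi^2)^{-r}\tfrac{\Gamma(1-r)}{\Gamma(1+r)}\zeta(1+2r)A(-r,r)$, the bare $\zeta(1+2r)$ reflecting the pole of $L'(\half+r,E)/L(\half+r,E)$ as $r\to0$ that is forced when $\omega_E=-1$. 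This is the maneuver used in \cite{conrey_snaith} to obtain \cite[Thm.\ 2.7]{conrey_snaith} (= \cref{ratios_conjecture_dirichlet}); the elliptic-curve case differs only in carrying the Euler factors at $p=2,3$ recorded in \cref{table_a2a3}, and the hypotheses $\Re(r)\gg\tfrac1{\log H}$, $\Re(r)<\tfrac14$, $\Im(r)\ll H^{1-\eps}$ are precisely what keep the circle $|\alpha-r|=\delta$ inside the region where \cite[Conj.\ 3.7]{DHP} applies and where the continued $A$ is holomorphic and nonzero.

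Because \cref{ratios_conjecture_elliptic_curves} is only a conditional consequence of \cite[Conj.\ 3.7]{DHP}, the steps I actually need to carry out --- the contour extraction and the pole bookkeeping above --- are routine, and the error $\cR(H)\ll H^{\frac13+\eps}$ passes through from the hypothesis. The step that would be the genuine obstacle, were one to try to remove the conditionality, is a quantitatively sufficient verification of \cite[Conj.\ 3.7]{DHP} itself: one must evaluate the family average of $\omega_E(N_E/4\pi^2)^{-\alpha}$, and of its twists by Hecke eigenvalues $\lambda_E(n)$, over the congruence- and box-constrained pairs $(a,b)$ defining $\cF(H)$ --- by Poisson summation with van der Corput / stationary-phase estimates, together with control of the root number $\omega_{E_{a,b}}$ --- and it is the interplay of the ranges $|a|\le H^{1/3}$ and $|b|\le H^{1/2}$ (with the minimality condition $p^4\mid a\Rightarrow p^6\nmid b$ of \cref{cF_def}) that would cap the error at $H^{1/3+\eps}$. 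By contrast, the formal part of the argument parallels \cite{conrey_snaith} line by line.
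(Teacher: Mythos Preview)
The paper does not give its own proof of \cref{ratios_conjecture_elliptic_curves}: the theorem is quoted verbatim from \cite[Thm.\ 3.8]{DHP} and used as a black box (just as \cref{ratios_conjecture_dirichlet} is quoted from \cite{conrey_snaith}). So there is no in-paper argument to compare against.

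That said, your sketch is an accurate description of how the cited result is obtained in \cite{DHP}. The derivation there is exactly the Conrey--Snaith maneuver you describe: start from the ratios conjecture \cite[Conj.\ 3.7]{DHP} for $\sum_{E}L(\half+\alpha,E)/L(\half+\gamma,E)$, differentiate the conjectured main term in $\alpha$, and set $\alpha=\gamma=r$. Your contour integral $\tfrac{1}{2\pi i}\oint_{|\alpha-r|=\delta}(\alpha-r)^{-2}\cdots\,d\alpha$ is just Cauchy's formula for that $\alpha$-derivative, so it is literally the same step; the radius $\delta\asymp 1/\log H$ is what makes the hypotheses $\Re(r)\gg 1/\log H$, $\Im(r)\ll H^{1-\eps}$ relevant, and the error $\cR(H)$ passes through unchanged (up to harmless logarithms) because the conjectured error is uniform on that disc. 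The identification of the diagonal piece with $-\zeta'/\zeta(1+2r)+A_\alpha(r,r)$ via $A(r,r)=1$, and of the dual piece with $-\omega_E(N_E/4\pi^2)^{-r}\tfrac{\Gamma(1-r)}{\Gamma(1+r)}\zeta(1+2r)A(-r,r)$, matches \cite{DHP} on the nose. Your closing paragraph correctly locates the only genuinely hard step --- verifying \cite[Conj.\ 3.7]{DHP} unconditionally --- outside the scope of the theorem.
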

Discussion of the error term $\cR(H)$ can be found immediately below the statement of \cite[Conj.\ 3.7]{DHP}. We would have guessed that $\cR(H) \gg \#\cF(H)^{\half+\eps} \gg H^{\frac{5}{12} + \eps}$.

Suppose that $2 < T < \x $ and $\x  \not\in \Z$. The explicit formula \eqref{explicit_formula_elliptic_curve_1} follows from the proof of \cite[Lemma 2.1]{fiorilli}.
\begin{align}\label{explicit_formula_elliptic_curve_1}
  \sum_{p^k < \x , \, p\,\nmid N_E}
  \left(\alpha_p^k + \overline{\alpha}_p^k\right)\log p = -\frac{1}{2\pi i}\int_{1 + \eps - iT}^{1 + \eps + iT} \frac{L'(s,E)}{L(s,E)} \x^s\,\frac{ds}{s} + O\!\left(\x^{1+\eps} T^{-1+\eps} N_E^\eps\right).
\end{align}

\Cref{elliptic_curve_murmuration} below is the elliptic curve analogue of \cref{dirichlet_murmuration}.
\begin{proposition}\label{elliptic_curve_murmuration}
  Assume \cite[Conj.\ 3.7]{DHP}. With the notation above,
  \begin{align*}
    \frac{1}{\#\cF(H)}&\frac{1}{\sqrt{\x}}\sum_{E \in \cF(H)}\sum_{\substack{p^k < \x  \\ p\,\nmid N_E}} \left(\alpha_p^k + \overline{\alpha}_p^k\right)\log p\\
    = &\sum_{\rootnum = \pm 1} \frac{\rootnum}{2\pi i} \int_{\half + \eps - iT}^{\half + \eps + iT} \frac{\Gamma(\frac{3}{2} - s)}{\Gamma(\half + s)} \zeta(2s) A(\thalf - s, s - \thalf) \frac{1}{\#\cF(H)}\sum_{\substack{E\in\cF(H) \\ \rootnum_E = \rootnum}} \left(\frac{4\pi^2 \x }{N_E}\right)^{s-\half}\,\frac{ds}{s}\\
    &- \frac{1}{\sqrt{\x}}\sum_{p^k < \x^\half} \log p  + O\!\left(\x^{\eps}T^\eps\cR(H)\#\cF(H)^{-1} + \x^{\half+\eps} T^{-1 + \eps} H^\eps \right).
  \end{align*}
\end{proposition}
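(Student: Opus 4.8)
The plan is to have the proof closely follow that of \cref{dirichlet_murmuration}, with \cref{ratios_conjecture_elliptic_curves} playing the role of \cref{ratios_conjecture_dirichlet}. First I would average the explicit formula \eqref{explicit_formula_elliptic_curve_1} over $E\in\cF(H)$: since every $E\in\cF(H)$ has $N_E\ll H$, each $N_E^\eps$ may be replaced by $H^\eps$ and the averaged remainder collapses to $O(X^{1+\eps}T^{-1+\eps}H^\eps)$, leaving — after interchanging the finite sum with the integral — the quantity $-\tfrac{1}{2\pi i}\int_{1+\eps-iT}^{1+\eps+iT}\tfrac{1}{\#\cF(H)}\sum_{E\in\cF(H)}\tfrac{L'(s,E)}{L(s,E)}X^s\,\tfrac{ds}{s}$. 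Here one step appears that has no analogue in the Dirichlet case: \eqref{explicit_formula_dirichlet_1} already sits on the line $\Re(s)=\thalf+\eps$, whereas \eqref{explicit_formula_elliptic_curve_1} is on $\Re(s)=1+\eps$, so I would next shift the contour to $\Re(s)=\thalf+\eps$. That is exactly the line on which \cref{ratios_conjecture_elliptic_curves} is usable, with $r=s-\thalf$: one needs $\Re(r)=\eps\gg(\log H)^{-1}$, so $\eps$ cannot be taken arbitrarily small, and $\Im(r)\ll H^{1-\eps}$, i.e.\ $T\ll H^{1-\eps}$; one also cannot shift any further to the right, since $A(-r,r)$ requires $\Re(r)<\tfrac{1}{4}$. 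On the shifted contour I would then substitute the expansion of \cref{ratios_conjecture_elliptic_curves}, using $1+2r=2s$, $1-r=\tfrac{3}{2}-s$, $1+r=s+\thalf$, $(\tfrac{N_E}{4\pi^2})^{-r}=(\tfrac{4\pi^2}{N_E})^{s-\thalf}$ and $A(-r,r)=A(\thalf-s,s-\thalf)$.

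This leaves four integrals, to be treated as in \cref{dirichlet_murmuration}. \emph{(i)} The term $-\tfrac{\zeta'(1+2r)}{\zeta(1+2r)}=-\tfrac{\zeta'(2s)}{\zeta(2s)}$ produces $\tfrac{1}{2\pi i}\int_{\thalf+\eps-iT}^{\thalf+\eps+iT}\tfrac{\zeta'(2s)}{\zeta(2s)}X^s\,\tfrac{ds}{s}$, which by \eqref{zeta_explicit_formula} equals $-\sum_{p^k<X^\half}\log p+O(X^{\half+\eps}T^{-1+\eps}+X^\eps)$ — the second main term of \cref{elliptic_curve_murmuration}. \emph{(ii)} The term $A_\alpha(r,r)=A_\alpha(s-\thalf,s-\thalf)$ plays the role of the prime sum bounded in \eqref{Aderiv_bound}: since $A_\alpha(r,r)$ is holomorphic and bounded on $\Re(r)>-\tfrac{1}{4}$, I would shift its integral leftward toward $\Re(s)=\tfrac{1}{4}$ and bound it trivially, with the result absorbed into the error term. \emph{(iii)} The conductor term $-\omega_E(\tfrac{N_E}{4\pi^2})^{-r}\tfrac{\Gamma(1-r)}{\Gamma(1+r)}\zeta(1+2r)A(-r,r)$ becomes $-\omega_E(\tfrac{4\pi^2}{N_E})^{s-\thalf}\tfrac{\Gamma(\tfrac{3}{2}-s)}{\Gamma(s+\thalf)}\zeta(2s)A(\thalf-s,s-\thalf)$; writing $X^s=X^\half(\tfrac{4\pi^2X}{N_E})^{s-\thalf}$, integrating against $-\tfrac{1}{2\pi i}\tfrac{ds}{s}$ over $[\thalf+\eps-iT,\,\thalf+\eps+iT]$, and splitting $\sum_{E\in\cF(H)}$ into its subsums over $\omega_E=+1$ and $\omega_E=-1$ so that the constant $\omega_E$ factors out, reproduces the main murmuration term of \cref{elliptic_curve_murmuration} verbatim. \emph{(iv)} The remainder $\tfrac{\cR(H)}{\#\cF(H)}$ contributes $-\tfrac{1}{2\pi i}\int_{\thalf+\eps-iT}^{\thalf+\eps+iT}\tfrac{\cR(H)}{\#\cF(H)}X^s\,\tfrac{ds}{s}\ll X^{\half+\eps}T^\eps\cR(H)\#\cF(H)^{-1}$. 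Collecting \emph{(i)}--\emph{(iv)} with the Perron error $O(X^{1+\eps}T^{-1+\eps}H^\eps)$ yields \cref{elliptic_curve_murmuration}.

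I expect the contour shift to $\Re(s)=\thalf+\eps$ to be the part requiring the most care: one must check that moving $-\tfrac{1}{2\pi i}\int\tfrac{1}{\#\cF(H)}\sum_E\tfrac{L'(s,E)}{L(s,E)}X^s\,\tfrac{ds}{s}$ off the line $\Re(s)=1+\eps$ introduces nothing beyond the stated error — i.e.\ that the averaged logarithmic derivative is adequately controlled throughout the strip $\thalf+\eps\le\Re(s)\le 1+\eps$ and on the horizontal connecting segments at height $T$ — and, relatedly, that the integral in step \emph{(ii)}, whose integrand is holomorphic only for $\Re(s)>\tfrac{1}{4}$, genuinely fits into the claimed error across the admissible range of $T$. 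The remaining work is the same constant-matching bookkeeping as in the proof of \cref{dirichlet_murmuration}.
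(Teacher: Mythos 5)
Your reconstruction matches the paper's approach: the paper's entire proof of this proposition is the single sentence ``Essentially identical to the proof of \cref{dirichlet_murmuration},'' and you have correctly worked out the analogous substitution of \cref{ratios_conjecture_elliptic_curves} into the averaged explicit formula, the resulting four integrals, and the resulting sign change that turns the $\zeta'/\zeta$ contribution into $-\sum_{p^k<X^{1/2}}\log p$. You have also correctly spotted the one place where ``identical'' is not literal: \eqref{explicit_formula_elliptic_curve_1} is stated on $\Re(s)=1+\eps$, whereas \eqref{explicit_formula_dirichlet_1} is already on $\Re(s)=\thalf+\eps$, so a contour shift is needed before \cref{ratios_conjecture_elliptic_curves} (which needs $\Re(r)<\tfrac14$ for $A(-r,r)$ to make sense) becomes applicable. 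The paper elides this shift entirely; your flagging it as the delicate step is fair, since on the stated assumptions (only \cite[Conj.\ 3.7]{DHP}, no GRH for individual $L(s,E)$) one must argue that the averaged logarithmic derivative contributes nothing new in the strip $\thalf+\eps\le\Re(s)\le 1+\eps$ and on the connecting horizontal segments. Your other flagged concern, the bound for the $A_\alpha$ integral in step (ii), is also worth keeping in mind: a trivial bound after shifting to $\Re(s)=\tfrac14+\eps$ gives $\ll X^{1/4+\eps}\log T$, which only fits inside the stated error for $T\ll X^{3/4-\eps'}$; to cover the full range the paper allows one would instead want a Perron-type argument exploiting the Dirichlet-series structure of $A_\alpha(r,r)$, exactly as in the derivation of \eqref{Aderiv_bound}.
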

\begin{proof}
  Essentially identical to the proof of \cref{dirichlet_murmuration}.
\end{proof}

\begin{remark}
  \label{rem:eq:elliptic_curve_conductor_guess}
In light of \cref{elliptic_curve_murmuration}, we wonder if, for e.g.\ $\cF_N(C)^\rootnum \coloneqq \{E \,:\, C < N_E < C + C^\delta,\,\, \rootnum_E = \rootnum\}$ with $\delta > \thalf$, 
\begin{align}
  \nonumber
  \frac{1}{\#\cF_N(C)^\rootnum}&\frac{1}{\sqrt{\x}}\sum_{E \in \cF_N(C)^\rootnum}\sum_{\substack{p^k < \x  \\ p\,\nmid N_E}} \left(\alpha_p^k + \overline{\alpha}_p^k\right)\log p\\
  ={}&\frac{\rootnum}{2\pi i} \int_{\half + \eps - iT}^{\half + \eps + iT} \frac{\Gamma(\frac{3}{2} - s)}{\Gamma(\half + s)} \zeta(2s) A(\thalf - s, s - \thalf) \frac{1}{\#\cF_N(C)^\rootnum}\sum_{E\in\cF_N(C)^\rootnum} \left(\frac{4\pi^2 \x }{N_E}\right)^{s-\half}\,\frac{ds}{s}
  \nonumber
  \\
  &- \frac{1}{\sqrt{\x}}\sum_{p^k < \x^\half} \log p  + O\!\left(\x^{\eps}T^\eps(\#\cF_N(C)^\rootnum)^{-\half + \eps} + \x^{\half+\eps} T^{-1 + \eps} H^\eps \right).\nonumber
\end{align}
\end{remark}

\subsection{Proof of \cref{elliptic_curve_murmuration_rho}}\label{sec:ec_dist}

Elliptic curves are most naturally ordered by conductor --- e.g. this is the quantity which fundamentally governs the behaviour of murmurations --- but most easily ordered by height --- this is the ordering used in \cite[Conj.\ 3.7]{DHP}. Converting between these two orderings is an interesting and difficult problem \cite{watkins, ssw, cremona_sadek}, and one essential for demonstrating that \cref{elliptic_curve_murmuration} leads to murmurations. We establish some notation so that we may refer to the results of \cite{conductor}, which will then allow us to prove \cref{elliptic_curve_murmuration_rho}.

\begin{definition}\label{FDelta_def}
  $$F_\Delta(\lambda) \coloneqq \frac{1}{4}\int_{-1}^1\int_{-1}^1 \begin{cases}1 & \text{if $-16(4\alpha^3 + 27\beta^2) < \lambda$} \\ 0 & \text{otherwise}\end{cases} \,d\alpha \,d\beta$$
\end{definition}

\begin{definition}\label{rho_def}
  For any prime $p$ and any integer $m$, define $\rho(p,m)$ to be the corresponding entry in \cref{table_rho_def}.\\
  
\centering{
  \begin{tabular}{|l||l|l|l|}
  \hline
  $\gcd(m, p^\infty)$ & $p \geqslant5$ & $p = 2$ & $p = 3$ \rule{0pt}{1.4em}\\
  [1.1ex]
  \hline
  \hline
  $p^0$ & $1 - \frac{1}{p^2}$ & $\half$ & $1$ \rule{0pt}{1.4em}\\ [0.2ex]
  $p^1$ & $\frac{1}{p^2}\!\left(1-\frac{1}{p}\right)$ & $\frac{1}{4}$ &  \\ [0.2ex]
  $p^2$ & $\frac{1}{p^3}\!\left(1-\frac{1}{p}\right)$ & $\frac{1}{4}$ &  \\ [0.2ex]
  $p^3$ & $\frac{1}{p^4}\!\left(1-\frac{1}{p}\right)\!\left(1 - \frac{1}{p}\right)$ &  &  \\ [0.2ex]
  $p^4$ & $\frac{1}{p^5}\!\left(1-\frac{1}{p}\right)\!\left(2 - \frac{1}{p}\right)$ &  &  \\ [0.2ex]
  $p^5$ & $\frac{1}{p^6}\!\left(1-\frac{1}{p}\right)\!\left(2 - \frac{2}{p}\right)$ &  &  \\ [0.2ex]
  $p^6$ & $\frac{1}{p^7}\!\left(1-\frac{1}{p}\right)\!\left(3 - \frac{2}{p}\right)$ &  &  \\ [0.2ex]
  $p^7$ & $\frac{1}{p^8}\!\left(1-\frac{1}{p}\right)\!\left(3 - \frac{2}{p}\right)$ &  &  \\ [0.2ex]
  $p^8$ & $\frac{1}{p^9}\!\left(1-\frac{1}{p}\right)\!\left(3 - \frac{2}{p}\right)$ &  &  \\ [0.2ex]
  $p^n$, $n \geqslant9$ & $\frac{1}{p^{n+1}}\!\left(1-\frac{1}{p}\right)\!\left(2 - \frac{2}{p}\right)$ &  &  \\ [0.2ex]
  \hline
\end{tabular}
}
\begin{tablecap}\label{table_rho_def}
  Values of $\rho(p,m)$. Blank entries denote a value of $0$.
\end{tablecap}
\end{definition}

\begin{definition}\label{FN_def}
  Let $F_\Delta$ and $\rho$ be as in \cref{FDelta_def,rho_def}.
  For $\lambda > 0$, define
  \begin{align*}
    F_N(\lambda) \coloneqq \frac{\zeta^{(6)}(10)}{\zeta^{(6)}(2)} \sum_{m = 1}^\infty \big(F_\Delta(m\lambda) - F_\Delta(-m\lambda)\big)\cdot\rho(2,m)\rho(3,m)\prod_{\substack{p \geqslant5 \\ p \mid m}} \frac{\rho(p,m)}{1 - p^{-2}},
  \end{align*}
  and set $F_N(\lambda) \coloneqq 0$ when $\lambda \leqslant 0$.
\end{definition}

\begin{remark}
  \label{rem:FN_computable}
  To compute values of the function $F_N(\lambda)$, one may use the identity
  \begin{align*}
    &\frac{\zeta^{(6)}(10)}{\zeta^{(6)}(2)} \sum_{m = 1}^\infty \big(F_\Delta(m\lambda) - F_\Delta(-m\lambda)\big)\cdot\rho(2,m)\rho(3,m)\prod_{p\geqslant 5} \frac{\rho(p,m)}{1 - p^{-2}}
    \\&
    \hspace{4cm}
    = 1 + \frac{\zeta^{(6)}(10)}{\zeta^{(6)}(2)} \sum_{1 \leqslant m \leqslant \frac{496}{\lambda_0}} \big(F_\Delta(m\lambda) - F_\Delta(-m\lambda) - 1\big)\cdot\rho(2,m)\rho(3,m)\prod_{p \geqslant 5} \frac{\rho(p,m)}{1 - p^{-2}}
    ,
  \end{align*}
  valid for any $\lambda_0 \leqslant \lambda$ (choosing $\lambda_0 \approx \lambda_1$ is most convenient). The product over primes is finite, as all but finitely many factors are $1$. See \cite{conductor} for details. The code used to plot the function $F_N(\lambda)$ and its derivative in \cite{conductor} is available at \cite{github_conductors}.
  The derivative $F_N'(\lambda)$ is more straightforward to compute, since $F_N(\lambda_1) - F_N(\lambda_0)$ is a finite sum of a finite product for all $\lambda_1 > \lambda_0 > 0$.
\end{remark}

\begin{proof}[Proof of \cref{elliptic_curve_murmuration_rho}]
  Take $\lambda_0 > \tfrac{4464}{\log H}$.
  We split the integral in main term of \cref{elliptic_curve_murmuration} into two parts:
  \begin{align}
    &\int_{\half + \eps - iT}^{\half + \eps + iT} \frac{\Gamma(\frac{3}{2} - s)}{\Gamma(\half + s)} \zeta(2s) A(\thalf - s, s - \thalf) \frac{1}{\#\cF(H)^\rootnum}\sum_{E\in\cF(H)^\rootnum} \left(\frac{4\pi^2 \x }{N_E}\right)^{s-\half}\,\frac{ds}{s} \nonumber\\
    &\hspace{2cm}= \int_{\half + \eps - iT}^{\half + \eps + iT} \frac{\Gamma(\frac{3}{2} - s)}{\Gamma(\half + s)} \zeta(2s) A(\thalf - s, s - \thalf) \frac{1}{\#\cF(H)^\rootnum}\sum_{\substack{E\in\cF(H)^\rootnum \\ N_E < \lambda_0 H}} \left(\frac{4\pi^2 \x }{N_E}\right)^{s-\half}\,\frac{ds}{s} \label{main_term_small}\\
    &\hspace{2cm}+ \int_{\half + \eps - iT}^{\half + \eps + iT} \frac{\Gamma(\frac{3}{2} - s)}{\Gamma(\half + s)} \zeta(2s) A(\thalf - s, s - \thalf) \frac{1}{\#\cF(H)^\rootnum}\sum_{\substack{E\in\cF(H)^\rootnum \\ N_E > \lambda_0 H}} \left(\frac{4\pi^2 \x }{N_E}\right)^{s-\half}\,\frac{ds}{s}. \label{main_term_large}
  \end{align}
  
  We first consider \eqref{main_term_small}.
  \begin{align*}
    &\int_{\half + \eps - iT}^{\half + \eps + iT} \frac{\Gamma(\frac{3}{2} - s)}{\Gamma(\half + s)} \zeta(2s) A(\thalf - s, s - \thalf) \frac{1}{\#\cF(H)^\rootnum}\sum_{\substack{E\in\cF(H)^\rootnum \\ N_E < \lambda_0 H}} \left(\frac{4\pi^2 \x }{N_E}\right)^{s-\half}\,\frac{ds}{s}\\
    &\hspace{2cm} = \frac{1}{\#\cF(H)^\rootnum}\sum_{\substack{E\in\cF(H)^\rootnum \\ N_E < \lambda_0 H}} \int_{\half + \eps - iT}^{\half + \eps + iT} \frac{\Gamma(\frac{3}{2} - s)}{\Gamma(\half + s)} \zeta(2s) A(\thalf - s, s - \thalf)  \left(\frac{4\pi^2 \x }{N_E}\right)^{s-\half}\,\frac{ds}{s}.
  \end{align*}
  Using \eqref{small_conductors} with $\cF(H)$ replaced by $\cF(H)^\rootnum$, and using the fact that $N_E \geqslant1$,
  \begin{align*}
    &\frac{1}{\#\cF(H)^\rootnum}\sum_{\substack{E\in\cF(H)^\rootnum \\ N_E < \lambda_0 H}} \int_{\half + \eps - iT}^{\half + \eps + iT} \frac{\Gamma(\frac{3}{2} - s)}{\Gamma(\half + s)} \zeta(2s) A(\thalf - s, s - \thalf)  \left(\frac{4\pi^2 \x }{N_E}\right)^{s-\half}\,\frac{ds}{s}\\
    &\hspace{4cm}\ll \lambda_0^{\frac{5}{6}} \int_{\half + \eps - iT}^{\half + \eps + iT} \frac{\Gamma(\frac{3}{2} - s)}{\Gamma(\half + s)} \zeta(2s) A(\thalf - s, s - \thalf)  \x^{s-\half}\,\frac{ds}{s}.
  \end{align*}
  By Stirling's formula \cite[8.328.1]{GR}, the integrand on the right decays as
  \begin{align*}
    \frac{\Gamma(\frac{3}{2} - s)}{\Gamma(\half + s)} \zeta(2s) A(\thalf - s, s - \thalf)  \x^{s-\half}\,\frac{1}{s} \,\ll\, \left(\frac{\x }{|t|^2}\right)^{\sigma - \half} |t|^{-1}.
  \end{align*}
  Shift the contour to $\Re(s) = \tfrac{1}{4}+\eps$. The only pole in the region is at $s = \thalf$ and has residue $\lambda_0^{\frac{5}{6}}$. The integral along the left segment is $\ll \lambda_0^{\frac{5}{6}} (\tfrac{\x }{T^2})^{-\frac{1}{4} + \eps}$, and the integrals along the time and bottom segments are $\ll \lambda_0^{\frac{5}{6}} T^{-1}(\tfrac{\x }{T^2})^\eps$. Taking $T \gg \x^{\half + \eps}$,
  \begin{align}\label{main_term_small_bound}
    \text{\eqref{main_term_small}} \ll \lambda_0^{\frac{5}{6}}.
  \end{align}

  We now turn our attention to \eqref{main_term_large}, which will be the source of the main term of \cref{elliptic_curve_murmuration_rho}. Define
  \begin{align*}
    \tilde{F}(\lambda) \coloneqq \frac{\#\left\{E \in \cF(H)^\rootnum \,:\, \lambda_0 < \frac{N_E}{H} \leqslant \lambda \right\}}{\#\cF(H)^\rootnum}.
  \end{align*}
  Then, for any holomorphic function $f$, applying \cite[Exercise 1.7.3 (ii)]{durrett} to each of the real and imaginary parts of $f$ separately,
  \begin{align}\label{integration_by_parts}
    \frac{1}{\#\cF(H)^\rootnum} \sum_{\substack{E \in \cF(H)^\rootnum \\ a < \frac{N_E}{H} \leqslant b}} f\!\left(\frac{N_E}{H}\right) = f(b)\tilde{F}(b) - f(a)\tilde{F}(a) - \int_a^b \tilde{F}(\lambda)f'(\lambda) \,d\lambda
  \end{align}
  This is an instance of integration by parts for Riemann--Stieljes integrals.
  
  Taking $$f(\lambda) \coloneqq \left(\frac{4\pi^2 \x }{\lambda H}\right)^{s - \half}$$ for fixed $s \in \CC$, we get
  \begin{align*}
    &\int_{\half + \eps - iT}^{\half + \eps + iT} \frac{\Gamma(\frac{3}{2} - s)}{\Gamma(\half + s)} \zeta(2s) A(\thalf - s, s - \thalf) \frac{1}{\#\cF(H)^\rootnum}\sum_{\substack{E\in\cF(H)^\rootnum \\ N_E > \lambda_0 H}} \left(\frac{4\pi^2 \x }{N_E}\right)^{s-\half}\,\frac{ds}{s} \\
    &\hspace{2cm}=  \int_{\half + \eps - iT}^{\half + \eps + iT} \frac{\Gamma(\frac{3}{2} - s)}{\Gamma(\half + s)} \zeta(2s) A(\thalf - s, s - \thalf) \int_{\lambda_0}^\infty (s - \thalf)\left(\frac{4\pi^2 \x }{\lambda H}\right)^{s-\half} \tilde{F}(\lambda)\,\frac{d\lambda}{\lambda} \,\frac{ds}{s}. 
  \end{align*}
  Using \eqref{conductor_distribution} with $\cF(H)$ replaced by $\cF(H)^\rootnum$,
  \begin{align}
    &
    \int_{\lambda_0}^\infty (s - \thalf)\left(\frac{4\pi^2 \x }{\lambda H}\right)^{s-\half} \tilde{F}(\lambda)\,\frac{d\lambda}{\lambda} \nonumber\\
    &\hspace{2cm}
    = \int_{\lambda_0}^\infty (s - \thalf)\left(\frac{4\pi^2 \x }{\lambda H}\right)^{s-\half} \left(F_N(\lambda) - F_N(\lambda_0) + O((\log H)^{-1+\eps})\right)\frac{d\lambda}{\lambda}.\label{big_lambda_Fhat}
  \end{align}
  Using the fact that $F_N(\lambda) \ll \lambda^{\frac{5}{6}}$ \cite[Lemma 6.6]{conductor} and reasoning similarly to the justification of \eqref{main_term_small_bound}, we see that the error term in \eqref{big_lambda_Fhat} is negligible.
  Using integration by parts on the main term,
  \begin{align*}
    \int_{\lambda_0}^\infty (s - \thalf)\left(\frac{4\pi^2 \x }{\lambda H}\right)^{s-\half} \left(F_N(\lambda) - F_N(\lambda_0)\right)\frac{d\lambda}{\lambda}
    = \int_{\lambda_0}^\infty \left(\frac{4\pi^2 \x }{\lambda H}\right)^{s-\half} F_N'(\lambda)\,d\lambda.
  \end{align*}
  Including the remaining terms and coefficients from \cref{elliptic_curve_murmuration}, taking $\lambda_0 \ll (\log H)^{-1}$, and writing $\x  = Hy$ then yields \cref{elliptic_curve_murmuration_rho}.
\end{proof}


\section{Other examples}
\label{sec:other_examples}

\subsection{Quadratic characters --- Ratios conjecture, smoothed}
\label{sec:kronecker_2}

Here we present work of Miller \cite{miller08} on the ratios conjecture for quadratic Dirichlet characters.
Let $g$ be an even Schwartz function with the property that its Fourier transform
$$\hat{g}(\xi) \coloneqq \frac{1}{2\pi}\int_\R g(x)e^{-2\pi i\xi x}\, dx$$
is supported on $(-\sigma, \sigma)$. Define
\begin{align*}
  &\cF_1(D) \coloneqq \left\{ d \,:\, 0 < d < D,\, \text{$d$ a fundamental discriminant} \right\}\\
  &\cF_2(D) \coloneqq \left\{ 8d \,:\, 0 < d < \tfrac{D}{8},\, \text{$d$ an odd positive squarefree integer} \right\}.
\end{align*}

Combining theorems 1.1, 1.2, and 1.5 of \cite{miller08}, one obtains the following.
\begin{theorem}[{based on Miller \cite{miller08}}]\label{dirichlet_murmuration_GRH}
  Assume GRH. With the notation above and $j \in \{1,2\}$,
  \begin{align*}
    \frac{1}{\#\cF_j(D)} \sum_{d \in \cF_j(D)} &\sum_{p} \sum_{k=1}^\infty \hat{g}\!\left(\frac{\log p^k}{\log D}\right) \frac{\chi_d(p)^k \log p}{\sqrt{p^k}}\\
    = &\,\,\int_0^\infty \hat{g}\left(1+\frac{\log y}{\log D}\right) \int_{-\infty}^\infty \frac{\pi^2}{6}\frac{\Gamma(\frac{1}{4} - \pi it)}{\Gamma(\frac{1}{4} + \pi it)} \frac{\zeta(1 - 4\pi it)}{\zeta(2 - 4\pi it)} \frac{1}{\#\cF_j(D)} \sum_{d \in \cF_j(D)} \left(\frac{\pi Dy}{d}\right)^{2\pi it} dt\, \frac{dy}{y}\\
    &- \log D \int_{-\infty}^\infty g(t\log D) \left( \frac{\zeta'}{\zeta}(1 + 4\pi it) + \sum_{p} \frac{\log p}{(p+1)(p^{1 + 4\pi it} - 1)} \right) dt\\
    &+ O\!\left(D^{-\half + \eps} + \begin{cases}D^{-\frac{1 - \sigma}{2} + \eps} & \text{if $j = 1$} \\ D^{-\frac{2 - 3\sigma}{2} + \eps} + D^{-\frac{3 - 3\sigma}{4} + \eps} & \text{if $j = 2$} \end{cases} \right).
  \end{align*}
\end{theorem}

The family $\cF_1(D)$ was studied in \cref{sec:kronecker_ratiosconjecture}. There the main result was \cref{dirichlet_murmuration}, which can be compared to the above.

We can present \cref{dirichlet_murmuration_GRH} in a manner parallel to \cref{elliptic_curve_murmuration_rho}.
It is shown in \cite[Lemma B.1]{miller08} that $\#\cF_1(D) = \tfrac{3}{\pi^2}D \,+\, O(D^\half)$. Similarly, $\#\cF_2(D) = \tfrac{1}{2\pi^2}D \,+\, O(D^\half)$. Emulating the style of \cref{elliptic_curve_murmuration_rho}, the murmuration term of \cref{dirichlet_murmuration_GRH} becomes
\begin{align}\label{kronecker_GRH_main_term}
  \int_0^1 \int_0^\infty \hat{g}\!\left(1 + \frac{\log y}{\log D}\right) \int_{-\infty}^\infty \frac{\pi^2}{6}\frac{\Gamma(\frac{1}{4} - \pi it)}{\Gamma(\frac{1}{4} + \pi it)} \frac{\zeta(1 - 4\pi it)}{\zeta(2 - 4\pi it)} \left(\frac{\pi y}{\lambda}\right)^{2\pi it} dt\, \frac{dy}{y} \, d\lambda,
\end{align}

The error term of \cref{dirichlet_murmuration_GRH} is small only for $\sigma < 1$, which precludes taking $\hat{g}(\xi)$ to be sharply peaked near $\xi = 1$. Moreover, when $\sigma < 1$ the main term is asymptotically constant \cite[(1.6)]{miller08}. The restriction which causes $p/N \approx 1$ to be just outside the range admissible in \cref{dirichlet_murmuration_GRH} comes from two sources, one of which is \cite[Lemma 3.5]{miller08}. The calculations surrounding this lemma follow \cite[\S 3]{gao} closely. 
At some point in \cite{gao} or \cite{miller08}, the term corresponding to $\sum_p \chi_d(p) \log p /\! \sqrt{p}$ (i.e. the $k = 1$ term of the left hand side of \cref{dirichlet_murmuration_GRH}) is relegated to an error term. This is the term which yields murmurations, explaining one reason why \cite{miller08} is just short of allowing one to prove the existence of murmurations. Indeed, \cite[Thm.\ 1.2]{LOP} and its proof resemble \cite[\S 3]{gao}.

The other restriction leading to the necessity of assuming the ratios conjecture to demonstrate murmurations in the present setting comes from \cite[\S 2]{miller08}'s analysis of the first term on the right hand side of \cref{dirichlet_murmuration_GRH}. As \eqref{kronecker_GRH_main_term} emphasizes, this is the term in which the murmurations manifest. Thus, to drop the need to assume the ratios conjecture here, one would have to show approximate equality between the $k = 1$ term on the left hand side of \cref{dirichlet_murmuration_GRH} (likely following the analysis of \cite[\S 3]{gao} or \cite[\S 3.2.2]{miller08}) and the first term on the right hand side. Perhaps \cite[\S 5]{LOP} would be a good starting point for this endeavour, and it would also be interesting to determine how \eqref{kronecker_GRH_main_term} compares to the formula obtained in \cite[Thm.\ 1.2]{LOP}.

In \cref{sec:GRH,sec:elliptic_curve_quadratic_twist} we present murmurations for holomorphic modular forms in the level aspect and quadratic twists of fixed elliptic curves in a style similar to this section. A ratio conjecture needs to be assumed in \cref{sec:elliptic_curve_quadratic_twist}, but not \cref{sec:GRH}. Once again, the crux of the matter comes down to whether or not the support of $\hat{g}$ can contain $1$ while keeping the error term small. Moreover, several other papers, such as \cite{miller09, gjmmnpp, miller_peckner, fiorilli_miller}, seem like they'd also be amenable to the sort of approach we present here for exhibiting murmurations.


\subsection{Quadratic characters --- GRH}
\label{sec:cech}

In this section we prove \cref{cech_murmuration} using the work of \v Cech \cite{cech}. The main result we will adapt is the following.

\begin{theorem}[{\v Cech \cite[Thm.\ 1.4]{cech}}]\label{cech}
  Assume GRH. Let $f$ be a smooth, fast-decaying (as in \cite{cech}) weight function with Mellin transform $\cM f$, and let $\chi_n$ denote the Jacobi symbol $(\frac{\cdot}{n})$. For $\eps < \Re(r) < \tfrac{1}{4}$ and $N \to \infty$,
  \begin{align*}
    &\sum_{\substack{n \,>\, 0 \\ n \text{\emph{ odd, squarefree}}}} \frac{1}{N}f\!\left(\frac{n}{N}\right) \frac{L'(\half + r, \chi_n)}{L(\half + r, \chi_n)} 
    = \frac{2 \cM f(1)}{3\zeta(2)}\left(\frac{\zeta'(1 + 2r)}{\zeta(1 + 2r)} + \sum_{p > 2} \frac{\log p}{(p+1)p^{1+2r} - 1}\right)
    \\
    &\hspace{3.5cm}- \cM f(1-r) \left(\frac{\pi}{N}\right)^r \left(\frac{\Gamma\!\left(\frac{\half-r}{2}\right)}{\Gamma\!\left(\frac{\half + r}{2}\right)} + \frac{\Gamma\!\left(\frac{\frac{3}{2}-r}{2}\right)}{\Gamma\!\left(\frac{\frac{3}{2}+r}{2}\right)}\right)\frac{\zeta(1-2r)}{4\zeta^{(2)}(2-2r)} + O\!\left(|r|^\eps N^{-2\Re(r) + \eps}\right).
  \end{align*}
\end{theorem}

In contrast with \cref{dirichlet_murmuration_GRH} which features weights on the range of primes being considered, \cref{cech} and hence \cref{cech_murmuration} features weights on the characters being considered. These sorts of weights can be combined; see e.g.\ \cite[Cor.\ 1.5]{cech}.


\begin{proof}[Proof of \cref{cech_murmuration}]
   This proof is similar to the proof of \cref{dirichlet_murmuration}. When $n$ is odd and squarefree $\chi_n$ is a primitive character modulo $n$ \cite[\S 2]{cech}. Averaging \eqref{explicit_formula_dirichlet_1} yields
\begin{align*}
  & \sum_{\substack{n \,>\, 0 \\ n \text{ odd, squarefree}}} \frac{1}{N} f\!\left(\frac{n}{N}\right) \sum_{p^k < \x } \chi_d(p^k)\log p
  \\
  &\hspace{2cm}= -\frac{1}{2\pi i}\int_{c - iT}^{c + iT} \sum_{\substack{n \,>\, 0 \\ n \text{ odd, squarefree}}} \frac{1}{N} f\!\left(\frac{n}{N}\right) \frac{L'(s,\chi_n)}{L(s,\chi_n)} \x^s\,\frac{ds}{s} + O\!\left(\x^{1+\eps} T^{-1+\eps} N^\eps\right).
\end{align*}

Using \cref{cech},
\begin{align*}
  -\frac{1}{2\pi i}\int_{c - iT}^{c + iT} &\sum_{\substack{n \,>\, 0 \\ n \text{ odd, squarefree}}}  \frac{1}{N} f\!\left(\frac{n}{N}\right) \frac{L'(s,\chi_n)}{L(s,\chi_n)} \x^s\,\frac{ds}{s}\\
  ={}&-\frac{1}{2\pi i}\int_{c-iT}^{c+iT} \frac{2 \cM f(1)}{3\zeta(2)}\frac{\zeta'(2s)}{\zeta(2s)} \x^s\,\frac{ds}{s}\\
  &-\frac{1}{2\pi i}\int_{c-iT}^{c+iT} \frac{2 \cM f(1)}{3\zeta(2)}\sum_p \frac{\log p}{(p+1)(p^{2s}-1)} \x^s\,\frac{ds}{s}\\
  &+\frac{\x^\half}{2\pi i}\int_{c-iT}^{c+iT} \cM f(\tfrac{3}{2}-s) \left(\frac{\Gamma\!\left(\frac{1-s}{2}\right)}{\Gamma\!\left(\frac{s}{2}\right)} + \frac{\Gamma\!\left(\frac{2-s}{2}\right)}{\Gamma\!\left(\frac{1+s}{2}\right)}\right)\frac{\zeta(2-2s)}{4\zeta^{(2)}(3-2s)} \left(\frac{\pi \x }{N}\right)^{s - \half} \,\frac{ds}{s}\\
  &-\frac{1}{2\pi i}\int_{c-iT}^{c+iT} O\!\left(|s-\thalf|^\eps N^{1 - 2c + \eps}\right) \x^s\,\frac{ds}{s}.
\end{align*}

Recall \eqref{zeta_explicit_formula} and \eqref{Aderiv_bound}, and note that
\begin{align*}
  &-\frac{1}{2\pi i}\int_{c-iT}^{c+iT} O\!\left(|s-\thalf|^\eps N^{1 - 2c + \eps}\right) \x^s\,\frac{ds}{s} \ll \x^c T^\eps N^{1 - 2c + \eps}.
\end{align*}

Collecting terms then yields \cref{cech_murmuration}.
\end{proof}


\subsection{Holomorphic modular forms in the level aspect --- GRH}
\label{sec:GRH}

In \cite{miller_montague} the authors prove, subject only to GRH, that certain key predictions of ratios conjectures for newspaces of holomorphic modular forms are in agreement with the corresponding arithmetic quantities. These results involve smooth test functions, with error terms that depend on these test functions, as well as some mild weights. By transporting these results into the framework presented here, we can prove that GRH implies the existence of murmurations in this case.

Let $g$ be an even Schwartz function with the property that its Fourier transform
$$\hat{g}(\xi) \coloneqq \frac{1}{2\pi}\int_\R g(x)e^{-2\pi i\xi x}\, dx$$
is supported on $(-\sigma, \sigma)$. 
Let $H_k^+(N)$ and $H_k^-(N)$ denote bases of normalized Hecke eigenforms of weight $k$, level $N$, and root number $\pm 1$ matching the superscript. Let $\lambda_f(p)$ be such that $p^{\frac{k-1}{2}}\lambda_f(p)$ is the $p^{\text{th}}$ Hecke eigenvalue of $f$. 


\begin{proof}[Proof of \cref{newspace_murmuration_GRH}]
  Let $V_1$ be as defined in \cite[(3.15)]{miller_montague}, with $R = N$:
  \begin{align*}
    V_1 \coloneqq \sum_{a\in\{0,1\}} \sum_{f \in H_k^\pm(N)} \frac{\Gamma(k-1) \lambda_f(N^a)}{(4\pi)^{k-1}\langle f,f \rangle} \frac{(i^k\mu(N)N^\half)^a}{\log N} \sum_{p\neq N} \hat{g}\!\left(\frac{\log p}{\log N}\right)\frac{\lambda_f(p) \log p}{\sqrt{p}}
    .
  \end{align*}
  By \cite[(3.22)]{miller_montague}, the $a = 0$ term is $\ll N^{\frac{\sigma}{2} - 1 + \eps}$. By \cite[Prop.\ 14.16, Thm.\ 14.17]{IK} or as discussed in \cite[Appendix C]{miller_montague}, the sign $\rootnum_f$ of the functional equation $\Lambda(s,f) = \rootnum_f \Lambda(1-s, \bar f)$ is $i^k\mu(N)N^\half\lambda_f(N)$. Thus,
  \begin{align}
    \label{eq:V1}
    V_1 = \pm \frac{1}{\log N} \sum_{f \in H_k^\pm(N)} \frac{\Gamma(k-1)}{(4\pi)^{k-1}\langle f,f \rangle} \sum_{p\neq N} \hat{g}\!\left(\frac{\log p}{\log N}\right)\frac{\lambda_f(p) \log p}{\sqrt{p}}
    + O(N^{\frac{\sigma}{2} - 1 + \eps})
    .
  \end{align}

  By \cite[Lemma 3.5]{miller_montague},
  \begin{align*}
    V_1
    &= 2\lim_{\delta \to 0^+} \int_{-\infty}^\infty
    g(t \log N)
    \frac{\Gamma\!\left(\frac{k}{2} - 2\pi it\right)}{\Gamma\!\left(\frac{k}{2} + 2\pi it\right)}
    \prod_p\left(1 + \frac{1}{(p-1)p^{4\pi it + \eps}}\right)
    \left(\frac{2\pi}{\sqrt{N}}\right)^{\!4\pi it}
    \,dt
    + O(N^{\frac{\sigma}{2} - 1 + \eps})
    \\
    &=
    2\lim_{\delta \to 0^+} \int_{-\infty}^\infty
    \int_{-\infty}^\infty
    \hat{g}(\xi) N^{2\pi i \xi t}\,d\xi\,
    \frac{\Gamma\!\left(\frac{k}{2} - 2\pi it\right)}{\Gamma\!\left(\frac{k}{2} + 2\pi it\right)}
    \prod_p\left(1 + \frac{1}{(p-1)p^{4\pi it + \eps}}\right)
    \left(\frac{2\pi}{\sqrt{N}}\right)^{\!4\pi it}
    \,dt
    + O(N^{\frac{\sigma}{2} - 1 + \eps})
    \\
    &=
    2\lim_{\delta \to 0^+}
    \int_{-\infty}^\infty
    \hat{g}(\xi)
    \int_{-\infty}^\infty
    \frac{\Gamma\!\left(\frac{k}{2} - 2\pi it\right)}{\Gamma\!\left(\frac{k}{2} + 2\pi it\right)}
    \prod_p\left(1 + \frac{1}{(p-1)p^{4\pi it + \eps}}\right)
    \left(\frac{4\pi^2 N^\xi}{N}\right)^{\!2\pi it}
    \,dt
    d\xi\,
    + O(N^{\frac{\sigma}{2} - 1 + \eps})
    .
  \end{align*}
  Substituting $N^\xi \eqqcolon Ny \Leftrightarrow \xi = 1 + \frac{\log y}{\log N}$ and combining with \eqref{eq:V1} completes the proof.
\end{proof}

As stated in \cite[(1.3) and (2.6)]{miller_montague},
\begin{align*}
  \# H_k^\pm(N) = \frac{k-1}{24}N + O((kN)^{\frac{5}{6}}) \quad\quad\text{and}\quad\quad N^{-1-\eps} \ll \frac{\Gamma(k-1)}{(4\pi)^{k-1} \langle f, f \rangle} \ll N^{-1+\eps}.
\end{align*}
Moreover, as a consequence of the Petersson trace formula \cite[(A.8)]{miller_montague},
$$\sum_{f \in H_k^\pm(N)} \frac{\Gamma(k-1)}{(4\pi)^{k-1}\langle f,f \rangle} \sim \half.$$
One is therefore able to choose $\hat{g}(\xi)$ to be sharply peaked around $\xi = 1$ while keeping the error term small.


\subsection{Quadratic twists of an elliptic curve --- Ratios conjecture}
\label{sec:elliptic_curve_quadratic_twist}

In \cite{hmm}, Huynh, Miller, and Morrison investigate agreement between the ratios conjecture and the corresponding arithmetic quantities in the case of quadratic twists of a fixed elliptic curve.

Let $N_E$ be an odd prime and let $E$ be an elliptic curve of conductor $N_E$ with even functional equation. Define
$$\cF_E(D) \coloneqq \left\{ 0 < d < D \,:\, \text{$d$ a fundamental discriminant}, \chi_d(-N_E)\rootnum_E = 1 \right\}.$$
Let $\alpha_p$ be as in \cref{sec:elliptic_curves}, and let $A_E$ be as in \cite[(3.2)]{hmm}.

By collecting results from \cite{hmm}, one obtains the following.
\begin{theorem}[{based on Huynh--Miller--Morrison \cite{hmm}}]\label{elliptic_curve_murmuration_GRH}
  Assume GRH. With the notation above,
  \begin{align*}
    &\frac{1}{\#\cF_E(D)}\sum_{d \in \cF_E(D)} \sum_{p \neq N_E} \sum_{\substack{k \,>\, 0 \\ \text{$k$ \emph{odd}} }} \hat{g}\!\left(\frac{\log p^k}{\log\frac{N_E D^2}{4\pi^2}}\right) \chi_d(p) \frac{(\alpha_p^k + \overline{\alpha}_p^k) \log p}{\sqrt{p^k}}\\
    &\hspace{1cm}
    = \frac{g(0)}{2}\log\!\left(\frac{\sqrt{N_E}D}{2\pi}\right)
    + \int_{-\infty}^{\infty} \frac{\Gamma(1 - \pi i t)}{\Gamma(1 + \pi i t)} \frac{\zeta(1 + 2\pi i t) L_E(\mathrm{sym}^2, 1 - 2\pi i t)}{L_E(\mathrm{sym}^2, 1)} A_E(-\pi i t, \pi i t)
    \\
    &\hspace{4.5cm}
    \cdot\frac{1}{\#\cF_E(D)}\sum_{d \in \cF_E(D)} \int_0^\infty \hat{g}\!\left(1 + \frac{\log y}{\log\frac{\sqrt{N_E} D}{2\pi}}\right) \left(\frac{Dy}{d}\right)^{2\pi i t} \,\frac{dy}{y}\,dt
    + O\!\left(D^{-\frac{1-\sigma}{2}}\right).
  \end{align*}
\end{theorem}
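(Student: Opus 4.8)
The plan is to transport the results of \cite{hmm} into the present framework, following the pattern by which \cref{cech_murmuration} and \cref{newspace_murmuration_GRH} were obtained. The starting point is the standard ($1$-level density) explicit formula applied to the twisted $L$-functions $L(s,E_d)$ for $d \in \cF_E(D)$. For such $d$ the condition $\chi_d(-N_E)\omega_E = 1$ forces the functional equation of $L(s,E_d)$ to have root number $+1$, and its analytic conductor is $\tfrac{N_E d^2}{4\pi^2}$. Scaling the zeros of $L(s,E_d)$ by $\log\tfrac{N_E D^2}{4\pi^2}$ (rather than by $\log\tfrac{N_E d^2}{4\pi^2}$; the resulting $\log(d/D)$ shift inside $\hat{g}$ is exactly what will be repackaged below) and averaging the explicit formula over $\cF_E(D)$ expresses the averaged sum over nontrivial zeros as an archimedean term of the shape $\tfrac{g(0)}{2}\log\tfrac{\sqrt{N_E}D}{2\pi}$ minus twice the averaged prime-power sum $\frac{1}{\#\cF_E(D)}\sum_{d}\sum_{p\neq N_E}\sum_{k \geq 1}\hat{g}\bigl(\tfrac{\log p^k}{\log(N_E D^2/4\pi^2)}\bigr)\,\chi_d(p)^k\,\tfrac{(\alpha_p^k + \overline{\alpha}_p^k)\log p}{\sqrt{p^k}}$, with an error that is negligible under GRH.

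Next I would invoke \cite{hmm}: under GRH, for test functions $g$ with $\hat{g}$ of sufficiently restricted support, the averaged $1$-level density over $\cF_E(D)$ agrees with its ratios-conjecture prediction up to an error $O(D^{-(1-\sigma)/2})$, where this prediction is precisely the $t$-integral on the right-hand side of the theorem --- built from the $\Gamma$-factor ratio $\tfrac{\Gamma(1-\pi it)}{\Gamma(1+\pi it)}$, the arithmetic factor $\tfrac{\zeta(1+2\pi it)L_E(\mathrm{sym}^2, 1-2\pi it)}{L_E(\mathrm{sym}^2, 1)}A_E(-\pi it, \pi it)$ (the analogue for twists of the function $A$ of \cref{A_def}), and the analytic-conductor factor $\bigl(\tfrac{Dy}{d}\bigr)^{2\pi it}$ integrated against $\hat{g}\bigl(1 + \tfrac{\log y}{\log(\sqrt{N_E}D/2\pi)}\bigr)\tfrac{dy}{y}$. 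Equating this with the explicit-formula expression from the previous step and solving for the prime-power sum, I would then split the $p^k$-sum into the contributions of odd and even $k$: for even $k$ one has $\chi_d(p)^k = 1$ whenever $p \nmid d$, so up to a negligible correction from $p \mid d$ the even-$k$ part is independent of $d$, and by the computation of \cite{hmm} it is exactly the contribution accounted for by the symmetric-square and $A_E$ factors of the main term; cancelling it leaves precisely the odd-$k$ sum --- which, since $\chi_d(p)^k = \chi_d(p)$ for odd $k$, is the left-hand side of the theorem --- equal to the stated right-hand side. Collecting the error terms finishes the argument.

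The bookkeeping is where the work lies, and I expect three points to be the main obstacle: (i) checking that the even-$k$ prime powers are exactly balanced by the symmetric-square and $A_E$ factors, so the rearrangement cleanly isolates the odd-$k$ sum (this is the reflection, on the prime side, of the Euler-product identities verified in \cite{hmm}); (ii) checking that the $d$-dependence of the analytic conductor --- both through the shift $\log(d/D)$ inside $\hat{g}$ and through the functional-equation term of the ratios conjecture, whose conductor factor plays the role that $(\tfrac{d}{\pi})^{-r}$ plays in \cref{ratios_conjecture_dirichlet} --- collapses into the single integral $\int_0^\infty \hat{g}\bigl(1 + \tfrac{\log y}{\log(\sqrt{N_E}D/2\pi)}\bigr)\bigl(\tfrac{Dy}{d}\bigr)^{2\pi it}\tfrac{dy}{y}$, leaving a conductor term depending on $D$ but not on individual $d$; and (iii) pinning down the precise support hypothesis on $\hat{g}$ under which the GRH-only verification of \cite{hmm} holds, which is what produces the error $O(D^{-(1-\sigma)/2})$ and which, together with the count $\#\cF_E(D) \asymp D$, must be in place before the framework applies. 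Everything else --- Stirling bounds for the $\Gamma$-ratio, and the estimation of shifted-contour and tail integrals --- is routine and parallels \cref{sec:quadratic}.
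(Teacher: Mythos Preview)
Your proposal is correct and follows essentially the same route as the paper: equate the ratios-conjecture prediction (\cite[Thm.~1.2]{hmm}) with the explicit-formula expression (\cite[Lemma~2.1]{hmm}) via the GRH verification of \cite{hmm}, then cancel the even-$k$ and $p\mid M$ pieces of the prime-power sum against the matching terms of the ratios formula to isolate the odd-$k$ sum. The paper's proof is simply terser, pointing directly to the specific lemmas and equations in \cite{hmm} (their $S_{\text{even},1,1}$, $S_{\text{even},1,2}$, $S_{\text{even},2}$, $S_{\text{odd}}(p\mid M)$) where each piece of your bookkeeping point (i) is already computed.
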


\begin{proof}[Proof of \cref{elliptic_curve_murmuration_GRH}]
  By assuming the ratios conjecture, we may equate the right hand sides of theorem 1.2 and lemma 2.1 of \cite{hmm}. The last term in lemma 2.1 is, over the course of section 2, broken up into five parts: $S_{\text{even},1,1}$, $S_{\text{even},1,2}$, $S_{\text{even},2}$, $S_{\text{odd}}(p\mid M)$, and $S_{\text{odd}}(p \nmid M)$. Expressions for the first four of these are given in lemma 2.2, lemma 2.3, lemma 2.4, and (2.34)+(2.37) respectively. Note two typos: the last term in (2.1) and the error term in (2.26) are each missing a factor of $(\x^*)^{-1}$. Comparing term by term with theorem 1.2 yields \cref{elliptic_curve_murmuration_GRH} after some elementary manipulations.
\end{proof}

One can omit the $g(0)/2$ term in \cref{elliptic_curve_murmuration_GRH} by modifying the integral to include a semicircle counterclockwise around $t = 0$ \cite[(2.20)--(2.21)]{hmm}. See also the presentation of \cref{newspace_murmuration_GRH}.

As was the case in \cref{sec:kronecker_2}, the error term of \cref{elliptic_curve_murmuration_GRH} (which is governed by a key bound used in \cite[Lemma 2.5]{hmm}) prevents one from using the theorem to investigate murmurations in the range $p/N \approx 1$. One can nonetheless use \cite[Lemma B.1]{hmm} to rewrite the main term of \cref{elliptic_curve_murmuration_GRH} in the style of \eqref{kronecker_GRH_main_term} and thereby obtain a formula for murmurations of quadratic twists of elliptic curves subject to the associated ratios conjecture.

\subsection{$\zeta'/\zeta$ --- Ratios conjecture}
\label{sec:zeta_logderiv_correlation}

In the previous sections our approach has been to take inverse Mellin transforms of ratios conjectures. The right hand sides of these ratios conjectures have in them a power of the analytic conductor, and it is with respect to this power that we take the inverse Mellin transform to obtain $p/N$-invariance; see \cref{rem:dirichet_functional_equation_factor}.

Ratios conjectures have also been used in the study of correlations of Dirichlet series on vertical lines. In this section, we apply our method to correlations of $\zeta'/\zeta$, using the ratios conjecture \cite[Conj.\ 2.1]{conrey_snaith}. \Cref{sec:zeta_correlation} is closely related, pertaining to correlations of $\zeta$ and based on an unconditional result of \cite{bettin}.

\begin{theorem}[Conrey--Snaith {\cite[Thm.\ 2.5]{conrey_snaith}}]
  \label{zeta_logderiv_correlation}
  Assume the ratios conjecture \cite[Conj.\ 2.1]{conrey_snaith}. For $T \in \R_{>0}$ and $\alpha, \beta \in \C$ satisfying $\frac{1}{\log T} \ll \Re(\alpha), \Re(\beta) < \frac{1}{4}$,
  \begin{align*}
    \int_0^T \frac{\zeta'}{\zeta}&\!\left(\half + it + \alpha\right) \frac{\zeta'}{\zeta}\!\left(\half - it + \beta\right) dt\\
    &= \zeta(1 + \alpha + \beta) \zeta(1 - \alpha - \beta)
    \prod_p \left(1 - \frac{1}{p^{1 + \alpha + \beta}}\right)\left(1 - \frac{2}{p} + \frac{1}{p^{1 + \alpha + \beta}}\right)\left(1 - \frac{1}{p}\right)^{-2}
    \int_0^T \left(\frac{2\pi}{t}\right)^{\alpha + \beta}dt\\
    &\quad+ T\left(\frac{\zeta'}{\zeta}\right)'\!(1 + \alpha + \beta)
    - T\sum_p \left(\frac{\log p}{p^{1 + \alpha + \beta} - 1}\right)^2
    + O(T^{\half + \eps}).
  \end{align*}
\end{theorem}

A version of the result above for correlations of logarithmic derivatives of holomorphic modular form $L$-functions is presented in \cite[Prop.\ 5.2]{bbjmnsy}.

Set $z \coloneqq \alpha + \beta$. Choose $T, \Delta T, H, c > 0$. Taking a partial inverse Mellin transform of \cref{zeta_logderiv_correlation} with respect to $z$ yields \cref{thm:zeta_logderiv_correlation_murmurations}.

In \cref{sec:kronecker_ratiosconjecture,sec:elliptic_curves,sec:GRH,sec:elliptic_curve_quadratic_twist} we considered families $\cF$ of arithmetic objects whose $L$-functions had conductors of similar size. Our strategy for demonstrating the existence of murmurations was to evaluate expressions of the form
\begin{align}
  \label{eq:sample_inverse_mellin}
  \frac{1}{2\pi i} \int_{c - iT}^{c + iT} \frac{1}{\#\cF} \sum_{f \in \cF} \frac{L'(s,f)}{L(s,f)}\x^s\,\frac{ds}{s}
\end{align}
in two different ways:
\begin{enumerate}[label=(\roman*)]
\item
  by using an explicit formula (which approximates \eqref{eq:sample_inverse_mellin} as an average over $\cF$ of Dirichlet coefficients), and
\item
  by using a ratios conjecture (which approximates \eqref{eq:sample_inverse_mellin} by some function which exhibits $\x /N$ scale-invariance, where $N$ is the size of the conductors in $\cF$).
\end{enumerate}

\Cref{thm:zeta_logderiv_correlation_murmurations}'s left hand side plays the role of \eqref{eq:sample_inverse_mellin} in the setting of this section. The theorem's right hand side follows from the ratios conjecture \cite[Conj.\ 2.1, Thm.\ 2.5]{conrey_snaith}, and is analogous to the point (ii) listed above.

One might ask if there is an interpretation of the left hand side of \cref{thm:zeta_logderiv_correlation_murmurations} parallel to point (i) above, i.e.\ analogous to the average over $\cF$ of Dirichlet coefficients one obtains from explicit formulas. Largely mirroring \cite[\S 4]{conrey_snaith},
in \cref{prop:zeta_logderiv_series,cor:zeta_logderiv_series} 
we express the left hand side of \cref{thm:zeta_logderiv_correlation_murmurations} by expressions reminiscent of pair correlation for $\zeta$. We first introduce some notation

Let $\x , c_1, c_2, c_3, c_4, T_1, T_2, T_3, T_4$ be real numbers satisfying
\begin{itemize}
\item
  $\x  > 1$,
\item
  $2 \geqslant c_1 > c_2 \geqslant -1$ and $2 \geqslant c_3 > c_4 \geqslant -1$,
\item
  $-T_1 < T_2$ and $-T_3 < T_4$.
\end{itemize}
Define
\begin{align*}
  \Omega_u &\coloneqq \{u \in \C \,:\, c_2 < \Re(u) < c_1,\,-T_1 < \Im(u) < T_2\},
  \\
  \Omega_v &\coloneqq \{v \in \C \,:\, c_4 < \Re(v) < c_3,\,-T_3 < \Im(v) < T_4\},
  \shortintertext{and}
  \delta &\coloneqq \min\!\left\{ |s - \rho|\,:\, s \in \dee\Omega_u \cup \dee\Omega_v,\,\zeta(\rho) = 0 \right\}.
\end{align*}
Impose further that $c_1,\dots, c_4, T_1, \dots, T_4$ are chosen such that $\delta > 0$.
(For any $T \geqslant 2$ there exists $T_* \in [T, T+1]$ such that $|\sigma + iT_* - \rho| \gg \frac{1}{\log T}$ for all zeros $\rho$ of $\zeta$, uniformly for $-1 \leqslant \sigma \leqslant 2$ \cite[Thm.\ 10.13]{MV}.)

Let $f: \Omega_u \times \Omega_v \to \C$ be holomorphic in each variable.

\begin{proposition}
  \label{prop:zeta_logderiv_series}
  With the notation above,
  \begin{align*}
    (2\pi i)^2 \sum_{\substack{\rho_1 \in \Omega_u,\, \rho_2 \in \Omega_v \\ \zeta(\rho_1) = \zeta(\rho_2) = 0}} f(\rho_1, \rho_2) \x^{\rho_1 + \rho_2}
    = {}&\int_{c_1 - iT_1}^{c_1 + iT_2} \int_{c_3 - iT_3}^{c_3 + iT_4} \frac{\zeta'}{\zeta}(u) \frac{\zeta'}{\zeta}(v) f(u,v) \x^{u+v} \,dv\,du
    \\&
    + O\!\left(
    \delta^{-2}
    \int\limits_{\substack{u \in \dee\Omega_u \\ \Re(u) \neq c_1}} \int\limits_{\substack{v \in \dee\Omega_v \\ \Re(v) \neq c_3}} \big|f(u,v) \x^{u+v}\big|\,dv\,du
    \right).
  \end{align*}
\end{proposition}
\begin{proof}
  Follows from \cite[Lemmas 12.1 and 12.2]{MV} and Cauchy's residue theorem.
\end{proof}

\begin{corollary}
  \label{cor:zeta_logderiv_series}
  Let $\x , T, \Delta T, H, c \in \R$ and $\alpha \in \C$ be such that
  \begin{itemize}
  \item
    $\x , T, \Delta T, H > 2$,
  \item
    $H - (T + \Delta T) \gg 1$,
  \item
    $\frac{1}{\log T} \ll \Re(\alpha) < \frac{1}{4}$,
  \item
    $2\Re(\alpha) < c < \frac{1}{4} + \Re(\alpha)$,
  \item
    the distance from any of $H$, $-H$, $T$, or $T+\Delta T$ to the imaginary part of any zero of $\zeta$ is $\gg (\log H)^{-1}$,
  \item
    there are no zeros of $\zeta$ with real part $\half - \Re(\alpha) + c$ and imaginary part between $-H$ and $H$, and
  \item
    there are no zeros of $\zeta$ with real part $\half + \Re(\alpha)$ and imaginary part between $T$ and $T+\Delta T$.
  \end{itemize}
  Then
  \begin{align*}
    \frac{1}{2\pi i} \int_{c - iH}^{c + iH} &\int_T^{T+\Delta T} \frac{\zeta'}{\zeta}\!\left(\half + it + \alpha\right) \frac{\zeta'}{\zeta}\!\left(\half - it - \alpha + z\right) dt \,\x^{z} \,\frac{dz}{z+1}
    \\
    ={}&
    \sum_{\substack{\rho_1\,:\, \zeta(\rho_1) = 0 \\ T < \Im(\rho_1) < T + \Delta T \\ -1 < \Re(\rho_1) < \half + \Re(\alpha)}}
    \:
    \sum_{\substack{\rho_2\,:\, \zeta(\rho_2) = 0 \\ -H < \Im(\rho_2) < H \\ -1 < \Re(\rho_2) < \half - \Re(\alpha) + c}}
    2\pi \frac{\x^{\rho_1 + \rho_2 - 1}}{\rho_1 + \rho_2}
    \quad
    +
    \quad
    O\!\left(
    \frac{\x^{c}\Delta T}{H - (T + \Delta T)} + \x^{c}\log H
    \right).
  \end{align*}
\end{corollary}
The condition that the distance from any of $H$, $-H$, $T$, or $T+\Delta T$ to the nearest imaginary part of a zero of $\zeta$ be $\gg (\log H)^{-1}$ can always be satisfied by increasing each of $H$, $T$, and $\Delta T$ by at most $1$ \cite[Thm.\ 10.13]{MV}. One could also consider a variant of \cref{prop:zeta_logderiv_series} which does not impose this condition and instead has in its error term a dependence on proximity to $\zeta$ zero ordinates.

\begin{proof}[Proof of \cref{cor:zeta_logderiv_series}]
  In \cref{prop:zeta_logderiv_series} take
  \begin{align*}
    f(u,v) &= \frac{1}{u+v}
    \\
    -T_1, T_2, -T_3, T_4 &= T, T+\Delta T, -H, H
    \\
    c_1, c_2, c_3, c_4 &= \half + \Re(\alpha),\, -0.99,\, \half - \Re(\alpha) + c,\, -1. \qedhere
  \end{align*}
\end{proof}

Combining \cref{cor:zeta_logderiv_series} and \cref{thm:zeta_logderiv_correlation_murmurations} yields the following.

\begin{proposition}
  \label{prop:zeta_logderiv_correlation_eval}
  Assume the ratios conjecture \cite[Conj.\ 2.1]{conrey_snaith}. With the same notation as \cref{cor:zeta_logderiv_series},
\begin{align*}
  &
  \sum_{\substack{\gamma_1\,:\, \zeta(\half + i\gamma_1) = 0 \\ T < \gamma_1 < T + \Delta T}}
  \:\:
  \sum_{\substack{\gamma_2\,:\, \zeta(\half + i\gamma_2) = 0 \\ |\gamma_2| < H}}
  \:\:
  2\pi \frac{\x^{i(\gamma_1 - \gamma_2)}}{1 + i(\gamma_1 - \gamma_2)}
  \\
  &\hspace{1cm}
  =
  \frac{1}{2\pi i} \int_{c - iH}^{c + iH}\zeta(1 + z) \zeta(1 - z)
  \prod_p \left(1 - \frac{1}{p^{1 + z}}\right)\left(1 - \frac{2}{p} + \frac{1}{p^{1 + z}}\right)\left(1 - \frac{1}{p}\right)^{-2}
  \int_T^{T+\Delta T} \left(\frac{2\pi \x }{t}\right)^z dt \,\frac{dz}{z+1}
  \\
  &\hspace{1cm}
  \quad
  + O\!\left(
  \x^c H^\eps (T^{\half + \eps} + \Delta T)
  \right)
  .
\end{align*}
\end{proposition}

\subsection{$\zeta$ --- Unconditional}
\label{sec:zeta_correlation}

We repeat the approach presented in \cref{sec:zeta_logderiv_correlation} for the function $\zeta$ in place of $\zeta'/\zeta$.
Asymptotics for the shifted second moment of $\zeta$ on vertical lines was first established by Ingham \cite{ingham}, for bounded shifts. A version allowing for unbounded shifts, which is necessary for our application, is proved by Bettin in \cite{bettin}.

\begin{theorem}[{Bettin \cite{bettin}}]
  \label{thm:bettin}
  For $T > 2$ and $\alpha, \beta \in \C$ with $\Re(\alpha), \Re(\beta) \ll \frac{1}{\log T}$,
  \begin{align*}
    \int_0^T \zeta\!\left(\thalf + it + \alpha\right)& \zeta\!\left(\thalf - it - \beta\right)\,dt\\
    = {}&\int_0^T \zeta(1+\alpha-\beta) + \zeta(1-\alpha+\beta) \pi^{\alpha - \beta} \frac{\Gamma\!\left(\frac{\half - \alpha - it}{2}\right) \Gamma\!\left(\frac{\half + \beta + it}{2}\right)}{\Gamma\!\left(\frac{\half + \alpha + it}{2}\right) \Gamma\!\left(\frac{\half - \beta - it}{2}\right)}\,dt\\
    &+ O\!\left(\left(T^\half + |\Im(\alpha)|^\half + |\Im(\beta)|^\half\right)(\log T)^2\right).
  \end{align*}
\end{theorem}
Bettin comments that versions of \cref{thm:bettin} which loosen the requirement $\Re(\alpha), \Re(\beta) \ll \frac{1}{\log T}$ could likely be produced using techniques similar to those in \cite{bettin}.

Moreover, \cite[Lemma 2.1]{bettin} relates the quantities in \cref{thm:bettin} to divisor sums. Taking inverse Mellin transforms of these expressions yields murmuration-like scale invariance, which ultimately originates from the functional equation for $\zeta$. We present this in \cref{thm:zeta_correlation_murmurations}, but let's first establish some notation.

Let $\mu(\sigma)$ be such that $\zeta(\sigma + it) \ll |t|^{\mu(\sigma)} + 1$. The Phragm\'en--Lindel\"of principle \cite[Thm.\ 8.2.1]{goldfeld:book} implies that $\mu(\sigma)$ can be taken to be $\thalf(1 - \sigma) + \eps$ for $0 < \sigma < 1$, while the Riemann Hypothesis for $\zeta$ allows one to take $\mu(\sigma) = K/\log\log|t|$ for some positive constant $K$, $\thalf \leqslant \sigma \leqslant 1$, and $|t| > 4$ \cite[Thm.\ 13.18]{MV}.

Let
\begin{align}
  \label{eq:onedef}
  \one{\text{condition}} \coloneqq \begin{cases} 1 & \text{condition is true} \\ 0 & \text{condition is false.} \end{cases}
\end{align}
For example, $\one{c > 0}$ is $1$ if $c > 0$ and $0$ otherwise.

Let $\{\x \}$ denote the distance from $\x $ to the nearest integer. Let $\gamma = 0.577\!\:\!...$ 
denote the Euler--Mascheroni constant.

\begin{theorem}
  \label{thm:zeta_correlation_murmurations}
  For $T, \Delta T, \x , H, c \in \R$ and $\alpha \in \C$ satisfying
  $T \gg \Delta T > 2H > 0$, $T \geqslant 2$, $\x  > 1$, $\x  \not\in \Z$, $H + |\Im(\alpha)| < \thalf T$, $c, \Re(\alpha) \ll \frac{1}{\log T}$, $c \neq 0$,
  \begin{align}
    \nonumber
    \frac{1}{2\pi i} \int_{c - iH}^{c + iH} \int_{T - \Im(\alpha)}^{T+\Delta T - \Im(\alpha)}
    &\zeta\!\left(\thalf + it + \alpha\right)\zeta\!\left(\thalf - it - \alpha + z\right)\,dt\,\x^z\frac{dz}{z}
    \\
    \label{eq:zeta_correlation_eq1}
    ={}&
    \sum_{\frac{T + H}{2\pi} < n < \frac{T + \Delta T - H}{2\pi}} 2\pi \sum_{d\mid n} \one{c > 0, d < \x} - \one{c < 0, d > \x}
    \\&
    \nonumber
    + O\!\left(
    \x^c T^\half (\log T)^2
    \log\tfrac{H}{c}
    + \x^c \frac{\log T}{\log\log T}
    (\log\tfrac{H}{c} + T^{-c}) (H + H^{-1} \Delta T \{\x \}^{-1} )
    \right)
    \\
    \label{eq:zeta_correlation_eq2}
    ={}&
    \frac{1}{2\pi i} \int_{c-iH}^{c+iH} \zeta(1-z) \int_{T + H}^{T + \Delta T - H} \left(\frac{2\pi \x }{t}\right)^z dt\,\frac{dz}{z}
    \\&
    \nonumber
    -\one{c < 0}(\log \x  + \gamma)\Delta T  + \Delta T \sum_{d < \x } \frac{1}{d}
    \\&
    \nonumber
    + O\Big(
    \x^c T^\half (\log T)^2
    \log\tfrac{H}{c}
    + \x^c T^{-c} H^{1 + \mu(1-c)}
    \log\tfrac{H}{c}
    + (\x^c + 1)H^{-1 + \mu(1 + c)}\Delta T
    \\&
    \nonumber
    \hspace{4.5cm}
    + \Delta T \min\!\left\{\x^{-1}, H^{-1}\{\x \}^{-1} \right\}
    + 4^{\frac{1}{\log \x }}H^{-1} \Delta T \log \x 
    \Big).
  \end{align}
\end{theorem}

Both error terms are $\ll (T^\half + H + \Delta T / H)(\x TH)^\eps$ when $-\log c \ll T^{\eps}$ and $\x $ is bounded away from integers.

We present the \hyperlink{proof:bettin1}{proof of \eqref{eq:zeta_correlation_eq1}} and the \hyperlink{proof:bettin2}{proof of \eqref{eq:zeta_correlation_eq2}} separately.

\begin{proof}[\hypertarget{proof:bettin1}{Proof of \eqref{eq:zeta_correlation_eq1}}]
  We introduce some notation of \cite{bettin} so that we may state \eqref{eq:bettin1} and \eqref{eq:bettin2} below.
  
  Let $a,b \in \C$ be such that $\Re(a), \Re(b) \ll \frac{1}{\log T}$ and $\Im(a), \Im(b) \ll T^2$. Let $u,v \in \R$ be such that $T^u = \max\{T, |\Im(a)|\}$ and $T^v = \max\{T, |\Im(b)|\}$.

  Let $S \subset \Z^2$ denote pairs $(m,d)$ of positive integers such that
  \begin{align*}
    \begin{cases}
      \frac{\Im(b)}{2\pi} < md \leqslant \frac{T + \Im(b)}{2\pi} & \text{if $\Im(b) > 0$} \\
      md \leqslant \frac{T + \Im(b)}{2\pi} \;\;\;\text{or}\;\;\; md \leqslant \frac{-\Im(b)}{2\pi} & \text{if $-T \leqslant \Im(b) \leqslant 0$} \\
      -\frac{T+\Im(b)}{2\pi} \leqslant md < -\frac{\Im(b)}{2\pi} & \text{if $\Im(b) \leqslant -T$.}
    \end{cases}
  \end{align*}
  
  \cite[Proof of thm.\ 1]{bettin} gives
  \begin{align}
    \label{eq:bettin1}
    \int_0^T \zeta\!\left(\thalf + it + a\right) \zeta\!\left(\thalf - it - b\right)\,dt
    = 2\pi \sum_S d^{b-a} + O(T^{\frac{v}{2}} (\log T)^2 + T^{r(u+v)}),
  \end{align}
  where $r$ is such that $\zeta(\thalf + it) \ll |t|^{r+\eps}$. By a result of Hardy and Littlewood (see \cite{huxley_ivic}) we may take $r = \tfrac{1}{6}$.

  \Cref{thm:zeta_correlation_murmurations} will take $a = \alpha$ and $b = \alpha - z$.
  Our assumption $H + |\Im(\alpha)| < \thalf T$ guarantees that $u = v = 1$, and that $S$ is such that \eqref{eq:bettin1} will imply
  \begin{align}
    \label{eq:bettin1_consequence}
    \int_T^{T+\Delta T} \zeta\!\left(\thalf + it + \alpha\right) \zeta\!\left(\thalf - it - \beta\right)\,dt
    = \sum_{\frac{T + \Im(\alpha - z)}{2\pi} < n < \frac{T + \Delta T + \Im(\alpha - z)}{2\pi}} 2\pi \sigma_{-z}(n) + O(T^{\frac{1}{2}} (\log T)^2).
  \end{align}
  Our assumption $\Delta T \ll T$ justifies writing $T$ instead of $T + \Delta T$ in the error term above.

  We now take an inverse Mellin transform of \eqref{eq:bettin1_consequence} in $z$.
  \begin{align}
    \nonumber
    \frac{1}{2\pi i}\int_{c - iH}^{c + iH} \int_T^{T+\Delta T} &\zeta\!\left(\thalf + it + \alpha\right) \zeta\!\left(\thalf - it - \beta\right)\,dt\,\x^z\,\frac{dz}{z}
    \\
    \label{eq:bettin1_inversemellin_mt}
    = {}&\frac{1}{2\pi i}\int_{c - iH}^{c + iH} \sum_{\frac{T + \Im(\alpha - z)}{2\pi} < n < \frac{T + \Delta T + \Im(\alpha - z)}{2\pi}} 2\pi \sum_{d\mid n}\left(\frac{\x }{d}\right)^z\frac{dz}{z}
    \\&\label{eq:bettin1_inversemellin_error}
    + \frac{1}{2\pi i}\int_{c - iH}^{c + iH} O(T^{\frac{1}{2}} (\log T)^2)\,\x^z\,\frac{dz}{z}.
  \end{align}

  We bound \eqref{eq:bettin1_inversemellin_error} using the triangle inequality:
  \begin{align*}
    \frac{1}{2\pi i}\int_{c - iH}^{c + iH} O(T^{\frac{1}{2}} (\log T)^2)\,\x^z\,\frac{dz}{z}
    &
    \ll \x^c T^{\frac{1}{2}} (\log T)^2 \int_{c - iH}^{c + iH} \frac{dz}{|z|}
    \\&
    \ll \x^c T^{\frac{1}{2}} (\log T)^2 \int_{-H}^{H} \frac{dh}{\sqrt{c^2 + h^2}}
    \\&
    \ll \x^c T^{\frac{1}{2}} (\log T)^2 \log\tfrac{H}{c}.
  \end{align*}

  We now consider the main term \eqref{eq:bettin1_inversemellin_mt}. We split the sum over $n$ up as
  \begin{align}
    \nonumber
    \frac{1}{2\pi i}\int_{c - iH}^{c + iH} &\sum_{\frac{T + \Im(\alpha - z)}{2\pi} < n < \frac{T + \Delta T + \Im(\alpha - z)}{2\pi}} \sum_{d\mid n}\left(\frac{\x }{d}\right)^z\frac{dz}{z}
    \\={}&
    \label{eq:mt1_mt}
    \frac{1}{2\pi i}\int_{c - iH}^{c + iH} \sum_{\frac{T + \Im(\alpha) + H}{2\pi} < n < \frac{T + \Delta T + \Im(\alpha) - H}{2\pi}} \sum_{d\mid n}\left(\frac{\x }{d}\right)^z\frac{dz}{z}
    \\&
    \label{eq:mt1_error}
    + \frac{1}{2\pi i}\int_{c - iH}^{c + iH} \sum_{\substack{\frac{T + \Im(\alpha - z)}{2\pi} < n < \frac{T + \Delta T + \Im(\alpha - z)}{2\pi} \\ n \not\in \left(\frac{T + \Im(\alpha) + H}{2\pi}, \frac{T + \Delta T + \Im(\alpha) - H}{2\pi}\right)}} \sum_{d\mid n}\left(\frac{\x }{d}\right)^z\frac{dz}{z}.
  \end{align}
  
  The number of integers $n$ being summed in \eqref{eq:mt1_error} is at most $4H/2\pi + 2$. Let $n$ be such that $T + \Im(\alpha) - H < 2\pi n < T + \Delta T + \Im(\alpha) + H$. Note that $n \ll T$, using our assumptions on the relative sizes of $T$, $\Delta T$, $H$, and $|\Im(\alpha)|$. For any $d \mid n$, we have
  \begin{align*}
    \left(\frac{\x }{d}\right)^z \ll \x^c + \left(\frac{2\pi \x }{T}\right)^c.
  \end{align*}
  We must consider both terms, since $c$ may be either positive or negative. Moreover, Wigert \cite{wigert} proves that $n$ has $\ll \log n / \log\log n$ divisors.

  With the observations of the previous paragraph, we may bound
  \begin{align*}
    \eqref{eq:mt1_error}
    &
    \ll \frac{1}{2\pi i}\int_{c - iH}^{c + iH} H \frac{\log T}{\log\log T} \left(\x^c + \left|\frac{2\pi \x }{T}\right|^c\right) \frac{dz}{|z|}
    \\&
    \ll \x^c H \frac{\log T}{\log\log T} \log\tfrac{H}{c} + H \frac{\log T}{\log\log T} \left|\frac{2\pi \x }{T}\right|^c \log\tfrac{H}{c}.
  \end{align*}

  Next we examine \eqref{eq:mt1_mt}. By complex analysis like that used to produce Perron's formula (see \cite[\S 5]{MV}), we have
  \begin{align*}
    \eqref{eq:mt1_mt}
    &
    =
    \sum_{\frac{T + \Im(\alpha) + H}{2\pi} < n < \frac{T + \Delta T + \Im(\alpha) - H}{2\pi}} \sum_{d\mid n} \frac{1}{2\pi i}\int_{c - iH}^{c + iH} \left(\frac{\x }{d}\right)^z\frac{dz}{z}
    \\&
    =
    \sum_{\frac{T + \Im(\alpha) + H}{2\pi} < n < \frac{T + \Delta T + \Im(\alpha) - H}{2\pi}} \sum_{d\mid n}
    \left[
    \begin{cases}
      \phantom{-}1 & \text{if $c > 0$ and $\x  > d$} \\
      \phantom{-}0 & \text{if $c > 0$ and $\x  < d$} \\
      \phantom{-}0 & \text{if $c < 0$ and $\x  > d$} \\
      -1 & \text{if $c < 0$ and $\x  < d$}
    \end{cases}
    \;\;+\;\;
    O\!\left(
    \x^c d^{-c} \min\!\left\{1, (H\log(\x /d))^{-1} \right\}
    \right)
    \right]\!.
  \end{align*}
  The sum over $n$ contains $\ll \Delta T$ terms, and, for each $n$, the sum over $d$ contains $\ll \log T / \log\log T$ terms by \cite{wigert}. These estimates use our assumptions that $T \gg \Delta T \gg H$ and $T \gg \Im(\alpha)$.
\end{proof}

\begin{proof}[\hypertarget{proof:bettin2}{Proof of \eqref{eq:zeta_correlation_eq2}}]
  Recall the notation introduced at the beginning of the \hyperlink{proof:bettin1}{proof of \eqref{eq:zeta_correlation_eq1}} above.

  \cite[Proof if thm.\ 1]{bettin} gives
  \begin{align}
    \label{eq:bettin2}
    \int_0^T \zeta\!\left(\thalf + it + a\right) \zeta\!\left(\thalf - it - b\right) dt
    ={}&
    \int_0^T \zeta(1 + a - b) + \zeta(1 - a + b)\left(\frac{|t + \Im(b)|}{2\pi}\right)^{b-a} dt
    \\&\nonumber
    +
    O(T^{\frac{u}{2}} (\log T)^2 + T^{\frac{v}{2}} (\log T)^2).
  \end{align}
  Taking $a = \alpha$ and $b = \alpha - z$ as before, \eqref{eq:bettin2} implies, with our assumptions, that
  \begin{align}
    \nonumber
    \int_T^{T + \Delta T} \zeta\!\left(\thalf + it + \alpha\right) &\zeta\!\left(\thalf - it - \alpha + z\right) dt
    \\&
    \nonumber
    = \int_{T + \Im(\alpha) - \Im(z)}^{T + \Delta T + \Im(\alpha) - \Im(z)} \zeta(1 + z) + \zeta(1 - z)\left(\frac{2\pi}{t}\right)^{z} dt
    + O(T^{\frac{1}{2}} (\log T)^2)
    \\&
    \nonumber
    = \Delta T \zeta(1 + z) + \zeta(1 - z) \int_{T + \Im(\alpha) - \Im(z)}^{T + \Delta T + \Im(\alpha) - \Im(z)} \left(\frac{2\pi}{t}\right)^{z} dt
    + O(T^{\frac{1}{2}} (\log T)^2).
  \end{align}
  Taking an inverse Mellin transform in $z$,
  \begin{align}
    \nonumber
    \frac{1}{2\pi i}\int_{c - iH}^{c + iH}
    \int_T^{T + \Delta T} \zeta\!\left(\thalf + it + \alpha\right) &\zeta\!\left(\thalf - it - \alpha + z\right) dt\,\x^z\,\frac{dz}{z}
    \\
    \label{eq:bettin2_t1}
    ={}&
    \frac{1}{2\pi i}\int_{c - iH}^{c + iH} \Delta T \zeta(1 + z)\,\x^z\,\frac{dz}{z}
    \\&
    \label{eq:bettin2_t2}
    + \frac{1}{2\pi i}\int_{c - iH}^{c + iH} \zeta(1 - z) \int_{T + \Im(\alpha) - \Im(z)}^{T + \Delta T + \Im(\alpha) - \Im(z)} \left(\frac{2\pi}{t}\right)^{z} dt\,\x^z\,\frac{dz}{z}
    \\&
    \label{eq:bettin2_t3}
    + \frac{1}{2\pi i}\int_{c - iH}^{c + iH}O(T^{\frac{1}{2}} (\log T)^2)\,\x^z\,\frac{dz}{z}.
  \end{align}

  We analyzed \eqref{eq:bettin2_t3} = \eqref{eq:bettin1_inversemellin_error} above, finding that $\eqref{eq:bettin2_t3} \ll \x^c T^\half (\log T)^2 \log\tfrac{H}{c}$.
  
  We analyze \eqref{eq:bettin2_t1} with Perron's formula \cite[Cor.\ 5.3]{MV}. Shift the contour of integration from $\Re(z) = c$ to $\Re(z) = \frac{1}{\log \x }$. This incurs an error $H^{-1 + \mu(1 + c)}$ from the horizontal segments. If $c < 0$, we also pick up a term coming from the integrand's pole at $z = 0$. Applying \cite[Cor.\ 5.3]{MV} then gives
  \begin{align*}
    \eqref{eq:bettin2_t1}
    ={}& -\one{c < 0}(\log \x  + \gamma)\Delta T  + \Delta T \sum_{d < \x } \frac{1}{d}
    \\&
    \nonumber
    + O\!\left((\x^c + 1)H^{-1 + \mu(1 + c)}\Delta T + 4^{\frac{1}{\log \x }}H^{-1} \Delta T \log \x  +  \Delta T \min\!\left\{\x^{-1}, H^{-1}\{\x \}^{-1} \right\}\right).
  \end{align*}

  We analyze \eqref{eq:bettin2_t2} similarly to \eqref{eq:bettin1_inversemellin_mt} above:
  \begin{align}
    \nonumber
    \frac{1}{2\pi i}\int_{c - iH}^{c + iH} &\zeta(1 - z) \int_{T + \Im(\alpha) - \Im(z)}^{T + \Delta T + \Im(\alpha) - \Im(z)} \left(\frac{2\pi}{t}\right)^{z} dt\,\x^z\,\frac{dz}{z}
    \\
    \nonumber
    ={}&\frac{1}{2\pi i}\int_{c - iH}^{c + iH} \zeta(1 - z) \int_{T + \Im(\alpha) + H}^{T + \Delta T + \Im(\alpha) - H} \left(\frac{2\pi \x }{t}\right)^{z} dt\,\frac{dz}{z}
    \\&
    \label{eq:mt2_error}
    + \frac{1}{2\pi i}\int_{c - iH}^{c + iH} \zeta(1 - z) \underset{t \not\in \big(T + \Im(\alpha) + H,\, T + \Delta T + \Im(\alpha) - H\big)}{\int_{T + \Im(\alpha) - \Im(z)}^{T + \Delta T + \Im(\alpha) - \Im(z)}} \left(\frac{2\pi \x }{t}\right)^{z} dt\,\frac{dz}{z}.
  \end{align}
  The length of the path of integration (in $t$) in \eqref{eq:mt2_error} is bounded by $4H$. The integrand is $\ll \left|\frac{2\pi \x }{T}\right|^c$ (using our assumptions on the relative size of $T$ and the other quantities). Thus, we find that
  \begin{align*}
    &\eqref{eq:mt2_error} \ll H^{1 + \mu(1 - c)} \left|\frac{2\pi \x }{T}\right|^c \log\tfrac{H}{c}. \qedhere
  \end{align*}
\end{proof}

\section{Approximate functional equations}
\label{sec:afe}

The ratios conjecture recipe of \cite{conrey_snaith} leads to scale-invariance characteristic of murmurations specifically because of the recipe's use of the approximate functional equation. The factor $N_f^{\half - s}$ which results from the use of the functional equation is multiplied by $\x^\half \x^{s - \half}$ upon taking an inverse Mellin transform, yielding the $\x /N$-invariance (cf.\ \cref{rem:dirichet_functional_equation_factor}).

We elaborate. The ratios conjecture proper (cf. \cref{ratios_conjecture_dirichlet}) is the following.
\begin{conjecture}[{Conrey--Snaith \cite[Conj.\ 2.6]{conrey_snaith}}]
  \label{conj:ratios_conjecture_quadratic}
  Let $D > 1$ and define
  $$\cF(D) \coloneqq \{d \,:\, 1 < d < D,\,\text{$d$ is a fundamental discriminant}\}.$$
  Let $\alpha, \gamma \in \C$ with $-\tfrac{1}{4} < \Re(\alpha) < \tfrac{1}{4}$, $\frac{1}{\log D} \ll \Re(\gamma) < \tfrac{1}{4}$, and $\Im(\alpha), \Im(\gamma) \ll D^{1-\eps}$. Then
  \begin{align*}
    &\frac{1}{\#\cF(D)} \sum_{d \in \cF(D)} \frac{L(\half + \alpha, \chi_d)}{L(\half + \gamma, \chi_d)}
    \\
    &=
    \frac{\Gamma\!\left(\frac{1}{4} - \frac{\alpha}{2}\right)}{\Gamma\!\left(\frac{1}{4} + \frac{\alpha}{2}\right)}
    \frac{\zeta(1-2\alpha)}{\zeta(1 - \alpha + \gamma)}
    \prod_p \left(1 - \frac{1}{p^{1 - \alpha + \gamma}}\right)^{-1}\left(1 - \frac{1}{(p+1)p^{1 - 2\alpha}} - \frac{1}{(p+1)p^{-\alpha + \gamma}}\right)
    \frac{1}{\#\cF(D)} \sum_{d \in \cF(D)} \left(\frac{\pi}{d}\right)^{\alpha}
    \\
    &+ \frac{\zeta(1 + 2\alpha)}{\zeta(1 + \alpha - \gamma)}
    \prod_p \left(1 - \frac{1}{p^{1 + \alpha + \gamma}}\right)^{-1}\left(1 - \frac{1}{(p+1)p^{1 + 2\alpha}} - \frac{1}{(p+1)p^{\alpha + \gamma}}\right)
    \,+\,
    O(D^{\half + \eps})
    .
  \end{align*}
\end{conjecture}
The factor $\left(\frac{\pi}{d}\right)^{\alpha}$ can be leveraged to produce murmurations by taking an inverse Mellin transform, as we have done repeatedly.

\Cref{conj:ratios_conjecture_quadratic} comes from writing
\begin{align}
  \label{eq:ratios_conjecture_decomp}
  L(s, \chi_d) \cdot \frac{1}{L(z, \chi_d)}
  &=
  \left(
  \sum_{n=1}^\infty \frac{\chi_d(n)}{n^s} V_{}\!\left(\frac{n}{\sqrt{d}}\right)
  + \frac{\halfGamma{1-s}}{\halfGamma{s}}
  \sum_{n=1}^\infty \frac{\chi_d(n)}{n^{1-s}} V_{1-s}\!\left(\frac{n}{\sqrt{d}}\right)
  \left(\frac{\pi}{d}\right)^{s-\half}
  \right)
  \cdot
  \sum_{m=1}^\infty \frac{\mu(m)\chi_d(m)}{m^z}
  ,
\end{align}
where the approximate functional equation (which we recall below) has been applied to $L(s,\chi_d)$. The Dirichlet series are then multiplied together, an average over $d \in \cF(D)$ is taken, and heuristically only contributions from $\chi_d(\square)$ are preserved. See \cite[\S 2.2]{conrey_snaith} for details.

Inspecting \eqref{eq:ratios_conjecture_decomp}, we see that the approximate functional equation is the source of the factor $\left(\frac{\pi}{d}\right)^{s-\half}$ which yields scale invariance.

We are thus led to seek a connection between murmurations and the approximate functional equation which is not confined to the study of ratios of Dirichlet series. Our strategy will be to evaluate expressions of the form
\begin{align}
  \label{eq:avg_mellin}
  \frac{1}{2\pi i}
  \int_{c - i\infty}^{c + i\infty}
  \frac{1}{\#\cF} \sum_{\rep \in \cF} L(s,\rep)\,\x^s f(s)ds 
\end{align}
(i.e.\ the average over a family of $L$-functions of a truncated inverse Mellin transform) in two different ways:
\begin{enumerate}[label=(\roman*)]
\item
  Using Mellin inversion, e.g.\ Perron's formula, to express \eqref{eq:avg_mellin} as an average of Dirichlet coefficients.
\item
  Using the approximate functional equation, to express \eqref{eq:avg_mellin} as an explicit meromorphic function.
\end{enumerate}

\subsubsection*{Mellin inversion}

If $L(s,\rep)$ is an $L$-function with Dirichlet series
\begin{align*}
  L(s,\rep) = \sum_{n=1}^\infty \frac{a_\rep(n)}{n^s}
\end{align*}
absolutely convergent whenever $\sigma \geqslant c_0$, and $f$ is a meromorphic function which decays sufficiently quickly, then
\begin{align}
  \nonumber
  \frac{1}{2\pi i} \int_{c_0 - i\infty}^{c_0 + i\infty} L(s,\rep) x^sf(s)ds
  &=
  \frac{1}{2\pi i} \int_{c_0 - i\infty}^{c_0 + i\infty} \sum_{n=1}^\infty \frac{a_\rep(n)}{n^s} x^sf(s)ds
  \\
  \nonumber
  &= \sum_{n=1}^\infty a_\rep(n) \frac{1}{2\pi i} \int_{c_0 - i\infty}^{c_0 + i\infty} \left(\frac{n}{x}\right)^{-s}f(s)ds
  ,
  \shortintertext{which we recognize as}
  \label{eq:mellin_inversion}
  &= \sum_{n=1}^\infty a_\rep(n) \,\cM^{-1}\mspace{-2mu}\{f\}\mspace{-5mu}\left(\frac{n}{x}\right)
  ,
\end{align}
where $\cM^{-1}$ is the inverse Mellin transform.

\subsubsection*{The approximate functional equation}

Suppose $L(s,\rep)$ satisfies the functional equation
\begin{align*}
  \left(\frac{\qrep}{\pi^\repdim}\right)^{\!\frac{s}{2}}\prod_{j=1}^\repdim \halfGamma{s + \kappa_j} L(s,\rep)
  =
  \rootnum_\rep \left(\frac{\pi^\repdim}{\qrep}\right)^{\!\frac{1-s}{2}}\prod_{j=1}^\repdim \halfGamma{1 - s + \bar\kappa_j} L(1-s,\bar\rep)
  ,
\end{align*}
where $a_{\bar\rep}(n) = \overline{a_\rep(n)}$. Then a simple form of ``the'' approximate functional equation (there are many variations) is
\begin{align}
  \label{eq:afe}
  L(s,\rep) =
  \sum_{n=1}^\infty \frac{a_\rep(n)}{n^s} V_s\!\left(\frac{n}{\sqrt{\qrep}}\right)
  +
  \rootnum_\rep \prod_{j=1}^\repdim \frac{\halfGamma{1 - s + \bar\kappa_j}}{\halfGamma{s + \kappa_j}} \sum_{n=1}^\infty \frac{a_{\bar\rep}(n)}{n^{1-s}} V_{1-s}\!\left(\frac{n}{\sqrt{\qrep}}\right) \left(\frac{\qrep}{\pi^\repdim}\right)^{\!s-\half}
  ,
\end{align}
where $V_s(y) \approx \onelr{y < |s|^{\frac{\repdim}{2}}}$ is some nice function which effectively truncates the Dirichlet series. The above is valid inside the critical strip $0 < \sigma < 1$, for $L(s,\rep)$ holomorphic. (If $L(s,\rep)$ has poles, then some additional terms must be included). Following \cite[Thm.\ 5.4]{IK}, one can take
\begin{align*}
  V_s(y) \coloneqq \frac{1}{2\pi i}\int_{3 - i\infty}^{3 + i\infty} \prod_{j=1}^\repdim \frac{\halfGamma{s+w}}{\halfGamma{s}} \left(\cos\frac{\pi w}{4A}\right)^{-B} y^{-w} \frac{dw}{w}
\end{align*}
with $A,B$ large integers.

\subsubsection*{Combining Mellin inversion and the approximate functional equation}

Let's assume the integrand in \eqref{eq:avg_mellin} is holomorphic in the half plane $\sigma \geqslant c$. Then
\begin{align*}
  \frac{1}{2\pi i} \int_{c_0 - i\infty}^{c_0 + i\infty} \frac{1}{\#\cF} \sum_{\rep \in \cF} L(s,\rep)\,\x^{s-\half} f(s)ds
  &=
  \frac{1}{\#\cF} \sum_{\rep \in \cF} \frac{1}{\sqrt{x}} \sum_{n=1}^\infty a_\rep(n) \cM^{-1}\mspace{-2mu}\{f\}\mspace{-5mu}\left(\frac{n}{x}\right)
\end{align*}
by \eqref{eq:mellin_inversion}, but also
\begin{align*}
  \frac{1}{2\pi i}
  &\int_{c_0 - i\infty}^{c_0 + i\infty} \frac{1}{\#\cF} \sum_{\rep \in \cF} L(s,\rep)\,\x^{s-\half} f(s)ds
  \\
  &=
  \frac{1}{2\pi i} \int_{c - i\infty}^{c + i\infty} \frac{1}{\#\cF} \sum_{\rep \in \cF} L(s,\rep)\,\x^{s-\half} f(s)ds
  \\
  &=
  \frac{1}{2\pi i} \int_{c - i\infty}^{c + i\infty} \frac{1}{\#\cF} \sum_{\rep \in \cF} \sum_{n=1}^\infty \frac{a_\rep(n)}{n^s} V_s\!\left(\frac{n}{\sqrt{\qrep}}\right) \,\x^{s-\half} f(s)ds
  \\
  &+
  \frac{1}{2\pi i} \int_{c - i\infty}^{c + i\infty} \frac{1}{\#\cF} \sum_{\rep \in \cF} \rootnum_\rep \prod_{j=1}^\repdim \frac{\halfGamma{1 - s + \bar\kappa_j}}{\halfGamma{s + \kappa_j}} \sum_{n=1}^\infty \frac{a_{\bar\rep}(n)}{n^{1-s}} V_{1-s}\!\left(\frac{n}{\sqrt{\qrep}}\right) \left(\frac{\pi^\repdim}{\qrep}\right)^{\!s-\half} \x^{s-\half} f(s)ds
\end{align*}
by \eqref{eq:afe}. I.e.,
\begin{align*}
  \frac{1}{\#\cF}
  &\sum_{\rep \in \cF} \frac{1}{\sqrt{x}} \sum_{n=1}^\infty a_\rep(n) \cM^{-1}\mspace{-2mu}\{f\}\mspace{-5mu}\left(\frac{n}{x}\right)
  \\
  &=
  \frac{1}{2\pi i} \int_{c - i\infty}^{c + i\infty} \frac{1}{\#\cF} \sum_{\rep \in \cF} \rootnum_\rep \prod_{j=1}^\repdim \frac{\halfGamma{1 - s + \bar\kappa_j}}{\halfGamma{s + \kappa_j}} \sum_{n=1}^\infty \frac{a_{\bar\rep}(n)}{n^{1-s}} V_{1-s}\!\left(\frac{n}{\sqrt{\qrep}}\right) \left(\frac{\pi^\repdim \x}{\qrep}\right)^{\!s-\half} f(s)ds
  \\
  &+
  \frac{1}{2\pi i} \int_{c - i\infty}^{c + i\infty} \frac{1}{\#\cF} \sum_{\rep \in \cF} \sum_{n=1}^\infty \frac{a_\rep(n)}{n^s} V_s\!\left(\frac{n}{\sqrt{\qrep}}\right) \,\x^{s-\half} f(s)ds
  .
\end{align*}
If we assume that the arithmetic objects $\rep \in \cF$ have $L$-functions with the same root number $\rootnum_\rep \eqqcolon \rootnum_\cF$ and gamma factors, and also all have similar conductor $\qrep \approx q$, then
\begin{align}
  \nonumber
  \frac{1}{\#\cF}
  \sum_{\rep \in \cF}
  &\frac{1}{\sqrt{x}} \sum_{n=1}^\infty a_\rep(n) \,\cM^{-1}\mspace{-2mu}\{f\}\mspace{-5mu}\left(\frac{n}{x}\right)
  \\
  \label{eq:avg_afe_mt}
  &=
  \frac{\rootnum_\cF}{2\pi i} \int_{c - i\infty}^{c + i\infty} \prod_{j=1}^\repdim \frac{\halfGamma{1 - s + \bar\kappa_j}}{\halfGamma{s + \kappa_j}} \frac{1}{\#\cF} \sum_{\rep \in \cF} \sum_{n=1}^\infty \frac{a_{\bar\rep}(n)}{n^{1-s}} V_{1-s}\!\left(\frac{n}{\sqrt{q}}\right) \left(\frac{\pi^\repdim \x}{q}\right)^{\!s-\half} f(s)ds
  \\
  \nonumber
  &+
  \frac{1}{2\pi i} \int_{c - i\infty}^{c + i\infty} \frac{1}{\#\cF} \sum_{\rep \in \cF} \sum_{n=1}^\infty \frac{a_\rep(n)}{n^s} V_s\!\left(\frac{n}{\sqrt{\qrep}}\right) \,\x^{s-\half} f(s)ds
  \\
  \nonumber
  &+
  \frac{\rootnum_\cF}{2\pi i} \int_{c - i\infty}^{c + i\infty} \prod_{j=1}^\repdim \frac{\halfGamma{1 - s + \bar\kappa_j}}{\halfGamma{s + \kappa_j}} \frac{1}{\#\cF} \sum_{\rep \in \cF} \sum_{n=1}^\infty \frac{a_{\bar\rep}(n)}{n^{1-s}}
  \\
  \nonumber
  &\hspace{4cm}
  \cdot\left[V_{1-s}\!\left(\frac{n}{\sqrt{\qrep}}\right)\left(\frac{\pi^\repdim \x}{\qrep}\right)^{\!s-\half} - V_{1-s}\!\left(\frac{n}{\sqrt{q}}\right)\left(\frac{\pi^\repdim \x}{q}\right)^{\!s-\half}\right] f(s)ds
  .
\end{align}

The term \eqref{eq:avg_afe_mt} exhibits scale-invariance that's similar to the examples from the previous section. This is expected: the ratios conjecture uses the approximate functional equation as part of its recipe, and it's the resulting term that has produced the results of the earlier sections; cf.\ \cref{rem:dirichet_functional_equation_factor}.

For example, if $c < \thalf$ and one takes $\cF$ to be $\cF_\chi$, the set of positive fundamental discriminants between $D_0$ and $D$ from \cref{dirichlet_murmuration}, then \eqref{eq:avg_afe_mt} becomes
\begin{align}
  \nonumber
  \frac{\rootnum_\cF}{2\pi i}
  &\int_{c - i\infty}^{c + i\infty} \frac{\halfGamma{1 - s}}{\halfGamma{s}} \frac{1}{\#\cF_\chi} \sum_{d \in \cF_\chi} \sum_{n=1}^\infty \frac{\chi_d(n)}{n^{1-s}} V_{1-s}\!\left(\frac{n}{\sqrt{D}}\right) \left(\frac{\pi \x}{D}\right)^{\!s-\half} f(s)ds
  \\
  \label{eq:afe_t2s}
  &=
  \frac{\rootnum_\cF}{2\pi i} \int_{c - i\infty}^{c + i\infty} \frac{\halfGamma{1 - s}}{\halfGamma{s}} \sum_{n=1}^\infty \frac{1}{n^{2-2s}} \frac{\#\{d \in \cF_\chi \,:\, (n,d) = 1\}}{\#\cF_\chi} V_{1-s}\!\left(\frac{n}{\sqrt{D}}\right) \left(\frac{\pi \x}{D}\right)^{\!s-\half} f(s)ds
  \\
  \nonumber
  &+
  \frac{\rootnum_\cF}{2\pi i}
  \int_{c - i\infty}^{c + i\infty} \frac{\halfGamma{1 - s}}{\halfGamma{s}} \frac{1}{\#\cF_\chi} \sum_{d \in \cF_\chi} \sum_{\substack{n=1 \\ n \neq \square}}^\infty \frac{\chi_d(n)}{n^{1-s}} V_{1-s}\!\left(\frac{n}{\sqrt{D}}\right) \left(\frac{\pi \x}{D}\right)^{\!s-\half} f(s)ds
  .
\end{align}


Estimating \eqref{eq:afe_t2s} can be done using \cite[Lemma 3.9]{quadratictwists}
\begin{align*}
  \#\{0 < d < D \,:\, \text{$d$ a fundamental discriminant},\, (n,d) = 1\} = \frac{3D}{\pi^2} \prod_{p\mid n}\frac{p}{p+1} + O(D^{\half + \eps}n^\eps)
  ,
\end{align*}
and \cite[Lemma 3.12]{quadratictwists}
\begin{align*}
  \sum_{n=1}^\infty \frac{1}{n^{2-2s}} \prod_{p\mid n} \frac{p}{p+1} = \frac{\zeta(2-2s)}{\zeta(3-2s)} \prod_p \left(1 + \frac{1}{(p+1)(p^{s+1}-1)}\right)
  .
\end{align*}

The Dirichlet series above is absolutely convergent, so one might guess that
\begin{align}
  \nonumber
  \frac{\rootnum_\cF}{2\pi i}
  \int_{c - i\infty}^{c + i\infty}
  &\frac{\halfGamma{1 - s}}{\halfGamma{s}} \frac{1}{\#\cF_\chi} \sum_{d \in \cF_\chi} \sum_{n=1}^\infty \frac{\chi_d(n)}{n^{1-s}} V_{1-s}\!\left(\frac{n}{\sqrt{D}}\right) \left(\frac{\pi \x}{D}\right)^{\!s-\half} f(s)ds
  \\
  \label{eq:afe_t2s_2}
  &\approx
  \frac{\rootnum_\cF}{2\pi i} \int_{c - i\infty}^{c + i\infty} \frac{\halfGamma{1 - s}}{\halfGamma{s}} \frac{\zeta(2-2s)}{\zeta(3-2s)} \prod_p \left(1 + \frac{1}{(p+1)(p^{s+1}-1)}\right) \left(\frac{\pi \x}{D}\right)^{\!s-\half} f(s)ds
  \\
  \label{eq:afe_t2ns}
  &+
  \frac{\rootnum_\cF}{2\pi i}
  \int_{c - i\infty}^{c + i\infty} \frac{\halfGamma{1 - s}}{\halfGamma{s}} \frac{1}{\#\cF_\chi} \sum_{d \in \cF_\chi} \sum_{\substack{n < \sqrt{|s|D} \\ n \neq \square}}^\infty \frac{\chi_d(n)}{n^{1-s}} \left(\frac{\pi \x}{D}\right)^{\!s-\half} f(s)ds
  .
\end{align}
The integrand of \eqref{eq:afe_t2s_2} resembles the integrand in \cref{dirichlet_murmuration}.

We now look at \eqref{eq:afe_t2ns}, the sum over non-square $n$. Let's consider the same quantity but with the values $\chi_d(n)$ replaced by iid random variables $X_d(n)$ with mean $0$ and absolute value $1$. The variance of the resulting truncated Dirichlet series is
\begin{align*}
  \frac{(|s|D)^{\sigma - \half - \eps}}{\#\cF}
  \ll
  \mathrm{Var}\!\left(\frac{1}{\#\cF_\chi} \sum_{d \in \cF_\chi} \sum_{\substack{n < \sqrt{|s|D} \\ n \neq \square}}^\infty \frac{X_d(n)}{n^{1-s}}\right)
  \ll
  \frac{(|s|D)^{\sigma - \half + \eps}}{\#\cF}
  .
\end{align*}
Hence, if $f(\sigma + it)$ decays sufficiently quickly as $|t| \to \infty$, then the random variable equivalent of \eqref{eq:afe_t2ns} is asymptotically small with probability $1$ if $\#\cF \gg D^{c - \half + \eps}$, and $c - \thalf$ could not be replaced by any smaller constant. This is a consequence of the central limit theorem \cite[Thm.\ 3.4.1]{durrett}. One can make completely effective the rate of convergence using the Berry--Esseen theorem \cite{tyurin_english}. It's often quite unclear to what extent probabilistic heuristics of this sort resemble the arithmetic, but in this case we recover the reasoning \cite[(2.19) etc.]{conrey_snaith} leading up to the ratios conjecture stated in \cref{ratios_conjecture_dirichlet}.

In our companion paper \cite{quadratictwists}, we go through the analysis we have outlined in this section, and prove unconditionally that, for quadratic twist families of $\mathrm{GL}_1$ automorphic representations, i.e.\ sums
\begin{align*}
  \frac{1}{\#\cF_\chi}
  \sum_{d \in \cF_\chi}
  &\frac{1}{\sqrt{x}} \sum_{n=1}^\infty \,\cM^{-1}\mspace{-2mu}\{f\}\mspace{-5mu}\left(\frac{n}{x}\right) n^{i\tau}\chi(n)\chi_d(n)
\end{align*}
with $\chi$ primitive and $\tau \in \R$, this approach does yield murmurations in the manner described here.

\renewcommand{\bibliofont}{\normalfont\small} 
\bibliographystyle{amsalpha}
\bibliography{murmurationsbib}{}

\newcommand{\etalchar}[1]{$^{#1}$}
\providecommand{\bysame}{\leavevmode\hbox to3em{\hrulefill}\thinspace}
\providecommand{\MR}{\relax\ifhmode\unskip\space\fi MR }
\providecommand{\MRhref}[2]{%
  \href{http://www.ams.org/mathscinet-getitem?mr=#1}{#2}
}
\providecommand{\href}[2]{#2}
\begin{thebibliography}{BLLD{\etalchar{+}}24}

\bibitem[Apo76]{apostol}
Tom~M. Apostol, \emph{Introduction to analytic number theory}, Undergraduate
  Texts in Mathematics, Springer-Verlag, New York-Heidelberg, 1976. \MR{434929}

\bibitem[BBJ{\etalchar{+}}24]{bbjmnsy}
Owen Barrett, Zoë~X. Batterman, Aditya Jambhale, Steven~J. Miller, Akash~L.
  Narayanan, Kishan Sharma, and Chris Yao, \emph{A random matrix model for a
  family of cusp forms}, arXiv
  \href{https://arxiv.org/abs/2407.14526}{2407.14526}, 2024.

\bibitem[BBLLD23]{bblld}
Jonathan Bober, Andrew~R. Booker, Min Lee, and David Lowry-Duda,
  \emph{Murmurations of modular forms in the weight aspect}, arXiv
  \href{https://arxiv.org/abs/2310.07746}{2310.07746}, 2023.

\bibitem[Bet10]{bettin}
Sandro Bettin, \emph{The second moment of the {R}iemann zeta function with
  unbounded shifts}, Int. J. Number Theory \textbf{6} (2010), no.~8,
  1933--1944. \MR{2755480}

\bibitem[BLLD{\etalchar{+}}24]{blldshz}
Andrew~R. Booker, Min Lee, David Lowry-Duda, Andrei Seymour-Howell, and Nina
  Zubrilina, \emph{Murmurations of {M}aass forms}, arXiv
  \href{https://arxiv.org/abs/2409.00765}{2409.00765}, 2024.

\bibitem[{\v C}ec24]{cech}
Martin {\v C}ech, \emph{The ratios conjecture for real {D}irichlet characters
  and multiple {D}irichlet series}, Trans. Amer. Math. Soc. \textbf{377}
  (2024), no.~5, 3487--3528. \MR{4744786}

\bibitem[Cow23]{cowan}
Alex Cowan, \emph{Murmurations and explicit formulas}, arXiv
  \href{https://arxiv.org/abs/2306.10425}{2306.10425}, 2023.

\bibitem[Cow24a]{github_conductors}
\bysame, \emph{{Conductor distribution code}},
  \url{https://github.com/thealexcowan/conductordistribution}, 2024.

\bibitem[Cow24b]{conductor}
\bysame, \emph{Conductor distributions of elliptic curves}, arXiv
  \href{https://arxiv.org/abs/2408.09745}{2408.09745}, 2024.

\bibitem[Cow25a]{github}
\bysame, \emph{{Murmurations code}},
  \url{https://github.com/thealexcowan/murmurations}, 2025.

\bibitem[Cow25b]{quadratictwists}
\bysame, \emph{On the mean value of $\mathrm{GL}_1$ and $\mathrm{GL}_2$
  {$L$}-functions, with applications to murmurations}, arXiv
  \href{https://arxiv.org/abs/2504.09944}{2504.09944}, 2025.

\bibitem[CS07]{conrey_snaith}
J.~B. Conrey and N.~C. Snaith, \emph{Applications of the {$L$}-functions ratios
  conjectures}, Proc. Lond. Math. Soc. (3) \textbf{94} (2007), no.~3, 594--646.
  \MR{2325314}

\bibitem[CS23]{cremona_sadek}
John~E. Cremona and Mohammad Sadek, \emph{Local and global densities for
  {W}eierstrass models of elliptic curves}, Math. Res. Lett. \textbf{30}
  (2023), no.~2, 413--461. \MR{4649636}

\bibitem[DHP15]{DHP}
Chantal David, Duc~Khiem Huynh, and James Parks, \emph{One-level density of
  families of elliptic curves and the {R}atios {C}onjecture}, Res. Number
  Theory \textbf{1} (2015), Paper No. 6, 37. \MR{3500990}

\bibitem[Dur19]{durrett}
Rick Durrett, \emph{Probability---theory and examples}, fifth ed., Cambridge
  Series in Statistical and Probabilistic Mathematics, vol.~49, Cambridge
  University Press, Cambridge, 2019. \MR{3930614}

\bibitem[Fio14]{fiorilli}
Daniel Fiorilli, \emph{Elliptic curves of unbounded rank and {C}hebyshev's
  bias}, Int. Math. Res. Not. IMRN (2014), no.~18, 4997--5024. \MR{3264673}

\bibitem[FM15]{fiorilli_miller}
Daniel Fiorilli and Steven~J. Miller, \emph{Surpassing the ratios conjecture in
  the 1-level density of {D}irichlet {$L$}-functions}, Algebra Number Theory
  \textbf{9} (2015), no.~1, 13--52. \MR{3317760}

\bibitem[Gao14]{gao}
Peng Gao, \emph{{$n$}-level density of the low-lying zeros of quadratic
  {D}irichlet {$L$}-functions}, Int. Math. Res. Not. IMRN (2014), no.~6,
  1699--1728. \MR{3180607}

\bibitem[GJM{\etalchar{+}}10]{gjmmnpp}
John Goes, Steven Jackson, Steven~J. Miller, David Montague, Kesinee Ninsuwan,
  Ryan Peckner, and Thuy Pham, \emph{A unitary test of the ratios conjecture},
  J. Number Theory \textbf{130} (2010), no.~10, 2238--2258. \MR{2660890}

\bibitem[Gol06]{goldfeld:book}
Dorian Goldfeld, \emph{Automorphic forms and {$L$}-functions for the group
  {${\rm GL}(n,\bold R)$}}, Cambridge Studies in Advanced Mathematics, vol.~99,
  Cambridge University Press, Cambridge, 2006, With an appendix by Kevin A.
  Broughan. \MR{2254662}

\bibitem[GR07]{GR}
I.~S. Gradshteyn and I.~M. Ryzhik, \emph{Table of integrals, series, and
  products}, seventh ed., Elsevier/Academic Press, Amsterdam, 2007, Translated
  from the Russian, Translation edited and with a preface by Alan Jeffrey and
  Daniel Zwillinger, With one CD-ROM (Windows, Macintosh and UNIX).
  \MR{2360010}

\bibitem[HI07]{huxley_ivic}
M.~N. Huxley and A.~Ivi\'c, \emph{Subconvexity for the {R}iemann zeta-function
  and the divisor problem}, Bull. Cl. Sci. Math. Nat. Sci. Math. (2007),
  no.~32, 13--32. \MR{2386169}

\bibitem[HKS09]{HKS}
D.~K. Huynh, J.~P. Keating, and N.~C. Snaith, \emph{Lower order terms for the
  one-level density of elliptic curve {$L$}-functions}, J. Number Theory
  \textbf{129} (2009), no.~12, 2883--2902. \MR{2560841}

\bibitem[HLOP24]{HLOP}
Yang-Hui He, Kyu-Hwan Lee, Thomas Oliver, and Alexey Pozdnyakov,
  \emph{Murmurations of elliptic curves}, Experimental Mathematics (2024),
  1--13.

\bibitem[HMM11]{hmm}
Duc~Khiem Huynh, Steven~J. Miller, and Ralph Morrison, \emph{An elliptic curve
  test of the {$L$}-functions ratios conjecture}, J. Number Theory \textbf{131}
  (2011), no.~6, 1117--1147. \MR{2786594}

\bibitem[IK04]{IK}
Henryk Iwaniec and Emmanuel Kowalski, \emph{Analytic number theory}, American
  Mathematical Society Colloquium Publications, vol.~53, American Mathematical
  Society, Providence, RI, 2004. \MR{2061214}

\bibitem[Ing27]{ingham}
A.~E. Ingham, \emph{Mean-{V}alue {T}heorems in the {T}heory of the {R}iemann
  {Z}eta-{F}unction}, Proc. London Math. Soc. (2) \textbf{27} (1927), no.~4,
  273--300. \MR{1575391}

\bibitem[LOP23]{LOP}
Kyu-Hwan Lee, Thomas Oliver, and Alexey Pozdnyakov, \emph{Murmurations of
  {D}irichlet characters}, arXiv
  \href{https://arxiv.org/abs/2307.00256}{2307.00256}, 2023.

\bibitem[Mil08]{miller08}
Steven~J. Miller, \emph{A symplectic test of the {$L$}-functions ratios
  conjecture}, Int. Math. Res. Not. IMRN (2008), no.~3, Art. ID rnm146, 36.
  \MR{2416994}

\bibitem[Mil09]{miller09}
\bysame, \emph{An orthogonal test of the {$L$}-functions ratios conjecture},
  Proc. Lond. Math. Soc. (3) \textbf{99} (2009), no.~2, 484--520. \MR{2533673}

\bibitem[MM11]{miller_montague}
Steven~J. Miller and David Montague, \emph{An orthogonal test of the
  {$L$}-functions ratios conjecture, {II}}, Acta Arith. \textbf{146} (2011),
  no.~1, 53--90. \MR{2741191}

\bibitem[MP12]{miller_peckner}
Steven~J. Miller and Ryan Peckner, \emph{Low-lying zeros of number field
  {$L$}-functions}, J. Number Theory \textbf{132} (2012), no.~12, 2866--2891.
  \MR{2965197}

\bibitem[MV07]{MV}
Hugh~L. Montgomery and Robert~C. Vaughan, \emph{Multiplicative number theory.
  {I}. {C}lassical theory}, Cambridge Studies in Advanced Mathematics, vol.~97,
  Cambridge University Press, Cambridge, 2007. \MR{2378655}

\bibitem[Rub13]{rubinstein:2013}
Michael~O. Rubinstein, \emph{Elliptic curves of high rank and the {R}iemann
  zeta function on the one line}, Exp. Math. \textbf{22} (2013), no.~4,
  465--480. \MR{3171106}

\bibitem[Sar23]{sarnak_letter}
Peter Sarnak, \emph{{Letter to Sutherland and Zubrilina}},
  \url{https://publications.ias.edu/sites/default/files/Nina\%20and\%20Drew\%20letter_0.pdf},
  2023.

\bibitem[SSW21]{ssw}
Ananth~N. Shankar, Arul Shankar, and Xiaoheng Wang, \emph{Large families of
  elliptic curves ordered by conductor}, Compos. Math. \textbf{157} (2021),
  no.~7, 1538--1583. \MR{4277110}

\bibitem[Sut22]{drew_letter}
Andrew Sutherland, \emph{{Letter to Michael Rubinstein and Peter Sarnak}},
  \url{https://math.mit.edu/~drew/RubinsteinSarnakLetter.pdf}, 2022.

\bibitem[Tyu12]{tyurin_english}
I.~S. Tyurin, \emph{A refinement of the remainder in the {L}yapunov theorem},
  Theory of Probability \& Its Applications \textbf{56} (2012), no.~4,
  693--696.

\bibitem[Wat08]{watkins}
Mark Watkins, \emph{Some heuristics about elliptic curves}, Experiment. Math.
  \textbf{17} (2008), no.~1, 105--125. \MR{2410120}

\bibitem[Wig07]{wigert}
Carl~Severin Wigert, \emph{Sur l'ordre de grandeur du nombre des diviseurs d'un
  entier}, Ark. Mat. Astr. Fys. \textbf{3} (1906--1907), no.~18, 1--9.

\bibitem[You10]{young:elliptic_curves}
Matthew~P. Young, \emph{Moments of the critical values of families of elliptic
  curves, with applications}, Canad. J. Math. \textbf{62} (2010), no.~5,
  1155--1181. \MR{2730361}

\bibitem[Zub23]{zubrilina}
Nina Zubrilina, \emph{Murmurations}, arXiv
  \href{https://arxiv.org/abs/2310.07681}{2310.07681}, 2023.

\end{thebibliography}

\end{document}